\documentclass[11pt,reqno]{amsart}

\usepackage{amsmath,mathtools}
\usepackage{amsthm}
\usepackage{amssymb}
\usepackage{graphicx}
\usepackage{verbatim}
\usepackage[dvipsnames]{xcolor}
\usepackage{tikz}
\usepackage{caption} %caption tikz
\usepackage{fullpage}
\usepackage{hyperref}
\usepackage{soul}
\usepackage{mathrsfs}

\usepackage{todonotes}

\usepackage[percent]{overpic}
\usetikzlibrary{math} %needed tikz library

\usepackage{pgfplots} %to plot

\theoremstyle{definition}

\theoremstyle{theorem}
\newtheorem{theorem}{Theorem}

\newtheorem*{conjA}{Conjecture}
\newtheorem{lemma}[theorem]{Lemma}
\newtheorem{prp}[theorem]{Proposition}
\numberwithin{theorem}{section}

% Sage colours:
\definecolor{sageblue}{rgb}{0,0,1}
\definecolor{sagered}{rgb}{1,0,0}
\definecolor{sagegreen}{rgb}{0.0, 0.5019607843137255, 0.0}
\definecolor{sageorange}{rgb}{1.0, 0.6470588235294118, 0.0}

\newcommand{\N}{\mathbb{N}}
\newcommand{\R}{\mathbb{R}}
\newcommand{\Q}{\mathbb{Q}}
\newcommand{\C}{\mathbb{C}}
\newcommand{\Hh}{\mathbb{H}}
\newcommand{\Z}{\mathbb{Z}}
\newcommand{\Li}{\operatorname{Li}}

\newcommand{\sign}{\operatorname{sign}}

\newcommand{\re}{\textnormal{Re}}
\newcommand{\im}{\textnormal{Im}}

\renewcommand{\=}{\;=\;}

\renewcommand{\Re}{\re}
\renewcommand{\Im}{\im}
\newcommand{\Mod}[1]{\ (\mathrm{mod}\ #1)}

\newcommand{\oor}{\overline{\overline r}}

\usepackage{color}

\newcommand{\bea}{\begin{equation*}\begin{aligned}}
\newcommand{\eea}{\end{aligned}\end{equation*}}

\newcommand{\beal}{\begin{equation}\begin{aligned}}
\newcommand{\eeal}{\end{aligned}\end{equation}}

\makeatletter
\@namedef{subjclassname@2020}{%
	\textup{2020} Mathematics Subject Classification}
\makeatother

%To show subsubsections in the table of contents
\setcounter{tocdepth}{4}

%make labels visible in pdf:
%\usepackage{showlabels}

\theoremstyle{remark}
\newtheorem*{remark}{Remark}
\newtheorem*{remarks}{Remarks}

\title{Oscillating asymptotics and conjectures of Andrews}
\begin{document}
	
	\author{Amanda Folsom}
	\address{Department of Mathematics, Seeley Mudd Building, Amherst College, Amherst, MA, USA, 01002}
	\email{afolsom@amherst.edu}
	
\author{Joshua Males}
\address{School of Mathematics, University of Bristol, Bristol, BS8 1TW, UK, and the Heilbronn Institute for Mathematical Research, Bristol, UK.}
\email{joshua.males@bristol.ac.uk}
	
	\author{Larry Rolen}
	\address{Department of Mathematics, 
1420 Stevenson Center, 
Vanderbilt University, 
Nashville, TN, USA, 37240}
	\email{larry.rolen@vanderbilt.edu}

	\author{Matthias Storzer}
\address{School of Mathematical Sciences,
	University College Cork, Cork, Ireland}
\email{mstorzer@ucc.ie}
	
		\subjclass[2020]{11P82, 33B30, 33D99}
	
	\keywords{Integer partitions; asymptotics; Wright's Circle Method}
	
	\thanks{The first author is   grateful for support from NSF Grant DMS-2200728, and also thanks the Max  Planck Institute for Mathematics, Bonn, Germany,  for its hospitality and support during portions of the writing of this paper. The research of the second author conducted for this paper was partially supported by the Pacific Institute for the Mathematical Sciences (PIMS). The research and findings may not reflect those of the Institute.  
	This work was supported by a grant from the Simons Foundation (853830, LR). The third author is also grateful for support from a 2021-2023 Dean's Faculty Fellowship from Vanderbilt University. 
	The fourth author has been supported by the Max-Planck-Gesellschaft.
	The authors also acknowledge the Vanderbilt University ``100 Years of Mock Theta Functions:
New Directions in Partitions, Modular Forms, and Mock Modular Forms'' Conference in May 2022, at which they discussed the topic of this paper and related ideas. This conference was supported by The Shanks Endowment,  Vanderbilt University, NSF Grant: “100 Years of Mock Theta Functions; New Directions in Partitions, Modular Forms, and Mock Modular Forms” award number: DMS-1951393, and NSA grant “100 Years of Mock Theta Functions,” award number: H98230-20-1-0022.}
	 
\maketitle

\begin{abstract}
In 1986, Andrews \cite{Andrews86, AndrewsAdv} studied the function $\sigma(q)$ from Ramanujan's ``Lost" Notebook, and made several conjectures on its Fourier coefficients $S(n)$, which count certain partition ranks.  In 1988, Andrews-Dyson-Hickerson \cite{ADH} famously resolved these conjectures, relating the coefficients $S(n)$ to the arithmetic of $\mathbb Q(\sqrt{6})$;  this relationship was further expounded upon by Cohen \cite{Cohen} in his work on Maass waveforms, and was more recently extended by Zwegers \cite{ZwegersWave} and by Li and Roehrig \cite{LR}. A closer inspection of Andrews' original work on $\sigma(q)$ reveals additional related functions and conjectures, which we study in this paper.  In particular, we study the function $v_1(q)$, also from Ramanujan's ``Lost" Notebook,  a $q$-hypergeometric series with partition-theoretic Fourier coefficients $V_1(n)$, and prove two of Andrews' conjectures on $V_1(n)$ which are parallel to his original conjectures on $S(n)$.   Our methods differ from those used in \cite{ADH}, and  require a blend of novel techniques inspired by Garoufalidis' and Zagier's recent work on asymptotics of Nahm sums \cite{GZ, GZknots}, with classical techniques including the Circle Method in Analytic Number Theory; our methods may also be applied to determine the asymptotic behavior of similar $q$-hypergeometric series of interest which are not amenable to classical techniques. We also offer explanations of additional related conjectures of Andrews, ultimately connecting the asymptotics of $V_1(n)$ to the arithmetic of $\mathbb Q(\sqrt{-3})$.
\end{abstract}

\section{Introduction and statement of results} \label{sec_intro}
In \cite{ADH}, Andrews, Dyson, and Hickerson famously studied the $q$-hypergeometric series
\begin{align*}
\sigma(q) &:= \sum_{n=0}^\infty \frac{q^{\frac{n(n+1)}{2}}}{(-q;q)_n} 
=: \sum_{n=0}^\infty S(n) q^n,
\end{align*}
found in Ramanujan's ``Lost" Notebook, along with its companion 
\begin{align*}
\sigma^*(q) &:= 2\sum_{n=0}^\infty \frac{(-1)^n q^{n^2}}{(q;q^2)_n}, 
\end{align*}
which was discovered later. Here and throughout, we let $q \coloneqq e^{2\pi i \tau}$ with $\tau \in \Hh$, the upper half-plane. Moreover, we define the $q$-Pochhammer symbol by
$(a;q)_n = \prod_{j=0}^{n-1}(1-aq^j)$ for $n \in \mathbb N_0\cup \{\infty\}.$  On one hand, these functions can be interpreted combinatorially, e.g., when expanded as a $q$-series, the coefficients $S(n)$ of $\sigma(q)$ count the difference between the number of partitions into distinct parts with even and odd rank.   The authors of \cite{ADH} were in part motivated to study these functions following Andrews' earlier conjectures \cite{Andrews86}:
\begin{conjA}[Conjecture 1 \cite{Andrews86}]
$\limsup |S(n)| = +\infty.$
\end{conjA}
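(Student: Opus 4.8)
The plan is to follow the arithmetic strategy by which Andrews--Dyson--Hickerson \cite{ADH} resolved this conjecture. In outline: (i) rewrite $\sigma(q)$ as an \emph{indefinite theta series}, so that $S(n)$ becomes a finite \emph{signed} count of integer points on a conic; (ii) identify that conic with (a shift of) the norm form of $\Z[\sqrt6]$, so that $S(n)$ equals a sum of values of a finite-order Hecke character over the integral ideals of $\Q(\sqrt6)$ of norm $24n+1$; and (iii) exploit the resulting multiplicativity to produce arbitrarily large values of $|S(n)|$. The substantive obstacle is step (i) --- producing the exact identity --- together with the bookkeeping in (ii) pinning down the correct character; once $S(n)$ is genuinely realised as a Hecke-character sum over ideals, step (iii) is elementary.

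For (i), I would feed the summand $q^{n(n+1)/2}/(-q;q)_n$ into a suitable Bailey pair and apply a classical $q$-hypergeometric transformation; after simplification this should express $\sigma(q)$ as a sum $\sum_{(r,s)\in\Z^2}\varepsilon(r,s)\,q^{Q(r,s)}$ in which $Q$ is an indefinite binary quadratic form of discriminant $24$ and $\varepsilon(r,s)\in\{-1,0,1\}$ is a difference of two $\sign$'s encoding the cone containing $(r,s)$; comparing coefficients of $q^n$ then gives $S(n)=\sum_{Q(r,s)=n}\varepsilon(r,s)$, a finite sum. (This is the delicate part; alternatively one quotes the identity of \cite{ADH} directly.) For (ii), completing the square identifies $Q$ with the norm form of $\Z[\sqrt6]$ in the variable $24n+1$, so the points with $Q(r,s)=n$ are indexed by the integral ideals $\mathfrak a$ of $\Q(\sqrt6)$ with $N\mathfrak a=24n+1$, and the automorphisms of $Q$ generated by the fundamental unit act compatibly with $\varepsilon$; collapsing each unit-orbit yields $S(n)=\sum_{N\mathfrak a=24n+1}\chi(\mathfrak a)$ for a ray-class --- hence finite-order --- character $\chi$ of $\Q(\sqrt6)$. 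Reindexing by $m=24n+1\equiv1\pmod{24}$ and setting $T(m)\coloneqq S((m-1)/24)$, this makes $T$ multiplicative with $|T(p)|\le2$ for $p\nmid6$, $T(p)=0$ for $p$ inert, and $|T(p)|=2$ for $p$ in a set $\mathcal S$ of split primes; since a Hecke-character sum cannot vanish at every split prime, $\mathcal S$ is infinite (indeed of positive density, by Chebotarev).

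For (iii), recall that $(\Z/24\Z)^\times\cong(\Z/2\Z)^3$, so every residue coprime to $24$ squares to $1$. By the pigeonhole principle, the infinite set $\mathcal S$ contains infinitely many primes in a single residue class $a\pmod{24}$ sharing a common value $T(p)=c\in\{2,-2\}$. Given $k\in\N$, pick $2k$ distinct such primes $p_1,\dots,p_{2k}$ and put $m\coloneqq p_1\cdots p_{2k}$; then $m\equiv a^{2k}\equiv1\pmod{24}$, so $n\coloneqq(m-1)/24\in\N_0$, and multiplicativity gives $|S(n)|=|T(m)|=|c|^{2k}=4^{k}$. As $k$ is arbitrary, $\limsup_{n\to\infty}|S(n)|=+\infty$, which is Andrews' Conjecture~1.

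Two remarks on method. Once (i)--(ii) are in hand, a softer endgame is available: Cohen's work \cite{Cohen} realises (a normalisation of) $\sum_m T(m)q^{m}$ as a Maass cusp form of eigenvalue $\tfrac14$, whence $\sum_{m\le X}T(m)^2\gg X$ by a Rankin--Selberg estimate; since $T(m)\ne0$ forces $m$ to be an ideal norm from $\Q(\sqrt6)$ and such $m$ have density $0$, boundedness of $T$ would give $\sum_{m\le X}T(m)^2=o(X)$, a contradiction. Finally, the analytic machinery developed in this paper for $V_1(n)$ --- Wright's circle method fed by Garoufalidis--Zagier-type asymptotics --- does \emph{not} apply to $S(n)$: these are coefficients attached to a Maass form rather than a (mock) modular form, they exhibit square-root cancellation with $S(n)=O_\varepsilon(n^\varepsilon)$, and in particular carry no exponentially growing main term for the circle method to detect, so the comparatively tiny values $4^{k}$ produced above are invisible to such an analysis --- which is exactly why the arithmetic route is needed here.
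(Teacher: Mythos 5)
This paper does not prove Conjecture~1. It is quoted in Section~\ref{sec_intro} purely as historical context, and the proof is explicitly attributed to Andrews--Dyson--Hickerson \cite{ADH}; the theorems actually proved here concern $V_1(n)$ (Andrews' Conjectures~3 and~4), not $S(n)$. What you have written is therefore a reconstruction of the \cite{ADH} argument rather than something to be checked against a proof in this paper.

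As a sketch of \cite{ADH}, your proposal is essentially correct and hits the right beats in the right order. Step~(i), the indefinite-theta identity for $\sigma(q)$, is the genuine content of \cite[Theorem~1]{ADH}, and you rightly flag it as the delicate input that one either derives via Bailey pairs or simply quotes. Step~(ii) --- that the $\{-1,0,1\}$-valued weight is invariant under the fundamental-unit action, so the finite signed lattice-point count descends to a finite-order Hecke-character sum $S(n)=\sum_{N\mathfrak a=24n+1}\chi(\mathfrak a)$, making $T\colon m\mapsto S\bigl((m-1)/24\bigr)$ multiplicative --- is the arithmetic heart, and your description of it is faithful. Step~(iii) is the elementary endgame: Chebotarev gives a positive-density set of split primes with $T(p)=\pm2$; pigeonholing on the residue class mod~$24$ and on the value $T(p)=c\in\{\pm2\}$ yields infinitely many primes in a single class $(a,c)$; and since $(\Z/24\Z)^\times\cong(\Z/2\Z)^3$, the product $m=p_1\cdots p_{2k}$ of $2k$ distinct such primes satisfies $m\equiv a^{2k}\equiv1\pmod{24}$, whence $n=(m-1)/24\in\N_0$ and $|S(n)|=|T(m)|=|c|^{2k}=4^k\to\infty$. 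This is precisely how \cite{ADH} exhibit unboundedness. Your closing methodological remark is also apt and dovetails with the paper's own motivation: $\sigma(q)$ is attached to Cohen's Maass waveform, has bounded radial limits at roots of unity, and has coefficients of size $O_\varepsilon(n^\varepsilon)$, so Wright's circle method has no exponentially growing main term to detect and the arithmetic route is unavoidable for $S(n)$ --- exactly as the absence of any known indefinite-theta structure for $v_1(q)$ forces the present paper's analytic route for $V_1(n)$.
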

\begin{conjA}[Conjecture 2 \cite{Andrews86}] 
$S(n)=0$ for infinitely many $n$.
\end{conjA}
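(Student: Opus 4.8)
The statement asserts \emph{exact} vanishing of $S(n)$ for infinitely many $n$, so — unlike a $\limsup=\infty$ statement — it cannot be reached by asymptotic input alone (the Circle Method, saddle‑point estimates, \textit{etc.}); one needs an exact arithmetic formula for $S(n)$ from which zeros can be read off. The plan is therefore to realize $\sigma(q)$ as an indefinite (Hecke‑type) theta series attached to the real quadratic field $\Q(\sqrt{6})$ and to exploit the multiplicativity of its coefficients — the same arithmetic phenomenon that, for the Nahm‑type sum $v_1(q)$, will instead be governed by $\Q(\sqrt{-3})$.

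\textbf{Step 1 (arithmetic formula).} Using the Bailey‑pair structure behind $\sigma(q)=\sum_{n\ge 0} q^{n(n+1)/2}/(-q;q)_n$ — equivalently, the functional equations relating $\sigma(q)$ and its companion $\sigma^*(q)$ under $q\mapsto 1/q$ — I would rewrite $\sigma(q)$ as a Hecke‑type double sum whose underlying quadratic form is, up to normalization, the norm form $x^2-6y^2$ of $\Z[\sqrt{6}]$, together with explicit sign and congruence restrictions on $(x,y)$. Tracking the exponent under $q\mapsto q^{24}$ should yield an identity of the shape $S(n)=T(24n+1)$, where for $N\equiv 1\pmod{24}$ the quantity $T(N)$ is a character‑weighted count of integral ideals of $\Q(\sqrt{6})$ of norm $N$, i.e.\ the $N$‑th coefficient of the $L$‑function of a Gr\"ossencharacter of $\Q(\sqrt{6})$ (equivalently, of a Hecke--Maass eigenform of eigenvalue $\tfrac14$, in the sense of Cohen's work). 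The structural payoff is that $T$ is multiplicative on integers coprime to $6$ — and every $N\equiv 1\pmod{24}$ is coprime to $6$ — with, for primes $p\ge 5$, local behavior $T(p)\in\{0,\pm 2\}$ according as $p$ is inert or split in $\Q(\sqrt{6})$, and $T(p^k)=0$ whenever $p$ is inert and $k$ is odd (there being no ideal of norm $p^k$).

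\textbf{Step 2 (producing the zeros).} Fix an inert prime; $p=7$ works, since $\left(\tfrac{6}{7}\right)=\left(\tfrac{-1}{7}\right)=-1$. For every integer $m$ with $m\equiv 7\pmod{24}$ and $7\nmid m$ — of which there are infinitely many, e.g.\ $m=31,55,79,\dots$ — set $N=7m$, so that $N\equiv 49\equiv 1\pmod{24}$, and $n=(N-1)/24\in\N$. These give infinitely many $n$. Since $v_7(N)=1$ is odd and $7$ is inert, the multiplicative formula forces $T(N)=T(7)\,T(N/7)=0$, hence $S(n)=T(N)=0$. Thus $S(n)=0$ for infinitely many $n$, as claimed.

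\textbf{The main obstacle.} Essentially all the work is Step 1: establishing the indefinite‑theta / Hecke‑eigenform identity for $\sigma(q)$, and in particular pinning down the precise Gr\"ossencharacter (the exact sign and congruence conditions in the $x^2-6y^2$ representations) and verifying the multiplicative structure of $T$, including the ramified primes $2,3$ (harmless here, since $N\equiv 1\bmod 24$) and the split/inert dichotomy encoded by $\left(\tfrac{6}{\cdot}\right)$. Once the formula is in hand, the vanishing statement is immediate; the companion Conjecture~1 then follows from a short separate argument, taking $N$ to be a product of many split primes whose local factors reinforce, so that $|S(n)|=|T(N)|$ is unbounded.
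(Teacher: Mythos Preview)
The paper does not itself prove Conjecture~2; it is stated only as historical background, with the proof attributed to Andrews--Dyson--Hickerson \cite{ADH} (and its Maass--waveform reformulation to Cohen \cite{Cohen}). The paper's own contributions concern $v_1(q)$ and Conjectures~3 and~4, where the methods are analytic (Watson-type contour integrals, saddle-point, Wright's circle method) rather than arithmetic. So there is no ``paper's own proof'' of this statement to compare against.

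That said, your proposal is precisely the ADH strategy that the paper cites: express $q\sigma(q^{24})$ as a Hecke-type indefinite theta series so that $S(n)=T(24n+1)$ with $T$ multiplicative and governed by the splitting of primes in $\Q(\sqrt{6})$, then force zeros by inserting an inert prime to an odd power. Your Step~2 is correct in spirit and detail (indeed $\bigl(\tfrac{6}{7}\bigr)=-1$, so $7$ is inert, and $N=7m$ with $m\equiv 7\pmod{24}$, $7\nmid m$, gives $N\equiv 1\pmod{24}$ with $T(N)=0$). As you rightly flag, all the content lies in Step~1, and that is exactly what \cite{ADH} supplies; the paper under review neither reproves nor needs it.
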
\noindent By showing   a deep connection between these hypergeometric series  and the arithmetic of $\mathbb Q(\sqrt{6})$, extending beyond their combinatorial interpretations, Andrews-Dyson-Hickerson \cite{ADH} succeeded in proving Andrews' two conjectures on $\sigma(q)$ above.  For example, we now know that 
 the 
coefficients of $\sigma(q)$ may also be defined   by a Hecke $L$-function which is a certain sum over ideals in $\mathbb Z[\sqrt{6}]$ \cite{ADH, Cohen}.    In an extension of the work in \cite{ADH}, Cohen \cite{Cohen} further constructed a Maass waveform from (the coefficients of) $\sigma$ and $\sigma^*$, which 
Zwegers generalized in \cite{ZwegersWave}, by constructing a family of what he termed mock Maass theta functions  associated to indefinite binary quadratic forms.  This was used to provide further examples of similar $q$-series related to the Maass waveforms in \cite{BringmannLovejoyRolen,KrauelRolenWoodbury,LiNgoRhoades}. It has very recently been reconstructed using theta integrals by Li and Roehrig \cite{LR}, who used this as a motivational example to discover new real-analytic modular forms whose Fourier coefficients are given by logarithms of real quadratic numbers.

In the same paper \cite{Andrews86}, Andrews made further conjectures 
like those above for $\sigma(q)$  for another function $v_1(q)$ also from Ramanujan's ``Lost" Notebook, defined by
\begin{equation}
	v_1(q) := \sum_{n\geq0} \frac{q^{n(n+1)/2}}{(-q^2;q^2)_n}
	\;=:\; \sum_{n\geq0} V_1(n)q^n.
\end{equation}  The function $v_1(q)$ admits a similar combinatorial interpretation to $\sigma(q)$.
 A partition is called odd-even if the parity of the parts alternates, starting with the smallest part odd.
The rank, defined as the difference between the largest part and the number of parts, of an odd-even partition is even.
The 
 coefficients $V_1(n)$ count the difference between the number of odd-even partitions of $n$ with rank congruent to $0\pmod{4}$ and congruent to $2\pmod{4}$ (see also \cite{AndrewsLostIV}).

While he noted that the growth of $|V_1(n)|$ ``is not very smooth," Andrews conjectured that there ``appear[s] to be great sign regularity". More precisely, he states the following conjectures.
\begin{conjA}[Conjecture 3 \cite{Andrews86}]
We have that $|V_1(n)|\to \infty$ as $n\to \infty$.
\end{conjA}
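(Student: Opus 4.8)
\emph{Proof proposal.}
The plan is to determine the precise asymptotics of $V_1(n)$ by Wright's Circle Method and then read the conclusion off directly. Beginning from
\[
V_1(n)\=\frac1{2\pi i}\oint_{|q|=e^{-1/N}}\frac{v_1(q)}{q^{n+1}}\,\frac{dq}{q},
\]
with $N$ chosen of the order dictated by the dominant singularity, one dissects the circle of integration into ``major arcs'' around roots of unity and ``minor arcs'' in between, so that everything reduces to the behaviour of $v_1(q)$ as $q$ tends to a root of unity $\zeta$ of order $c$. Writing $q=\zeta e^{-t}$ and letting $t\to0$ in a narrow wedge $|\arg t|<\delta$, the product $\prod_{j=1}^{n}(1+q^{2j})$ in the denominator of $v_1(q)=\sum_{n\ge0}q^{n(n+1)/2}\prod_{j=1}^{n}(1+q^{2j})^{-1}$ acquires factors tending to $0$ precisely when $4\mid c$; at all other roots of unity — in particular at $q=1$ and $q=-1$ — $v_1$ stays bounded and contributes no growth, so only the $\zeta$ with $4\mid c$ survive.

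For these $\zeta$ I would apply the Euler–Maclaurin summation formula to the sum over $n$, in the spirit of Garoufalidis and Zagier's analysis of Nahm sums: after splitting $n$ into residue classes to control the oscillating phase $q^{n(n+1)/2}$ and passing to a rescaled variable $x\asymp tn$, each class is governed by a ``potential'' assembled from dilogarithms, whose relevant critical point is the golden-ratio value $e^{-x_*/2}=\tfrac{\sqrt5-1}{2}$ — with critical value $-\tfrac{\pi^2}{10}$, using the classical evaluation $\Li_2\!\big(\tfrac{3-\sqrt5}{2}\big)=\tfrac{\pi^2}{15}-\log^2\!\big(\tfrac{\sqrt5-1}{2}\big)$ — and this fixes the exponential scale of the asymptotics. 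The delicate point, and the source of the \emph{oscillating} behaviour, is that on summing the residue classes a Gauss-sum-type cancellation strikes the naive leading saddle term, so the genuine dominant contribution of $v_1(\zeta e^{-t})$, and above all its phase, must be extracted with care; one must likewise pin down which $\zeta$, in complex-conjugate pairs, actually dominate.

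Reinserting these expansions into the contour integral, carrying out the ensuing saddle-point evaluation in $t$ (with $1/N$ of the order of the new saddle point), and bounding the minor arcs by a strictly smaller quantity, one reaches an asymptotic of the shape
\[
V_1(n)\ \sim\ \Phi(n)\,P(n)\qquad(n\to\infty),
\]
in which $P(n)\to\infty$ and $\Phi(n)$ is a bounded function that is eventually periodic in $n$, the periodicity reflecting the finitely many dominant roots of unity together with their conjugates. The statement $|V_1(n)|\to\infty$ then reduces to checking that $\Phi(n)$ is bounded away from $0$ for all large $n$ — that is, that the oscillating prefactor never generates total cancellation — and since $\Phi$ takes only finitely many values this is a finite verification on the explicitly computed constants, which is exactly where finer arithmetic, ultimately the arithmetic of $\mathbb{Q}(\sqrt{-3})$, enters. (It is also precisely here that the situation parts ways with that of $\sigma(q)$: there the corresponding prefactor does vanish along an infinite subsequence, which is why $S(n)=0$ infinitely often.)

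The principal obstacle is the analysis of the second paragraph: running the Euler–Maclaurin/saddle-point expansion of $v_1(\zeta e^{-t})$ \emph{uniformly} in the wedge, following the Gauss-sum cancellations closely enough to isolate the surviving leading term and its phase, determining which roots of unity dominate, and evaluating the resulting constants $\Phi(n)$ with enough precision to conclude that they never vanish. The remaining ingredients — the arc dissection, the choice of $N$, the minor-arc estimates, and the final saddle-point computation — are routine once this is available.
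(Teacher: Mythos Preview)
Your overall architecture (Wright's Circle Method, major arcs at roots of unity of order divisible by $4$, Nahm-type saddle analysis near those points, then a saddle-point evaluation of the Cauchy integral) matches the paper's. But two concrete errors derail the proposal.

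First, the saddle computation is wrong. The denominator here is $(-q^2;q^2)_n$, not $(q;q)_n$, and the resulting stationary-point equation near $\zeta=\pm i$ is $(1-e^{-4iv_0})^2=-e^{-4iv_0}$, whose solutions are primitive sixth roots of unity, $e^{-4iv_0}=e(\pm 1/6)$. The exponential scale is governed by $V=\tfrac{i}{8}\operatorname D(e(1/6))$ (Gieseking's constant over $8$), not by the golden ratio or $-\pi^2/10$; the latter belong to the Rogers--Ramanujan Nahm sum $\sum q^{n^2}/(q;q)_n$, which has a different potential.

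Second, and more seriously, the structural claim that $V_1(n)\sim\Phi(n)P(n)$ with $\Phi$ \emph{eventually periodic in $n$} taking finitely many values is false. Because $V$ is purely imaginary, the two conjugate saddles at $\pm i$ combine to give
\[
V_1(n)=(-1)^{\lfloor n/2\rfloor}\,\frac{e^{\sqrt{2|V|n}}}{\sqrt n}\,(\gamma^++(-1)^n\gamma^-)\Bigl(\cos\sqrt{2|V|n}-(-1)^n\sin\sqrt{2|V|n}\Bigr)\bigl(1+O(n^{-1/2})\bigr)+O\!\left(n^{-1/2}e^{\sqrt{|V|n/2}}\right).
\]
The oscillating factor depends on $\sqrt{2|V|n}\bmod 2\pi$, not on $n\bmod m$; it is not periodic, it takes a dense set of values, and in particular it is arbitrarily small infinitely often (whenever $\sqrt{2|V|n}$ is close to $\pi(\ell\pm\tfrac14)$). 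So there is no finite verification that $\Phi(n)\neq 0$: the main term $M(n)$ can be of the same order as, or smaller than, the error $E(n)$ coming from the next tier of roots of unity. This is precisely why the paper does \emph{not} prove the literal Conjecture~3. It proves instead the refined statement that $|V_1(n)|\to\infty$ away from a set of density $0$, obtained by a discrepancy bound for the sequence $\tfrac{1}{2\pi}\sqrt{2|V|n}\bmod 1$ showing that the trigonometric factor stays away from its zeros on a set of density $1$. Proving the unrefined statement would require Diophantine information about $\pi^2/|V|$ (equivalently about $\zeta_{\mathbb Q(\sqrt{-3})}(2)/\pi^4$) that is currently out of reach; see the paper's Section~\ref{Sec: explanations}.
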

\begin{conjA}[Conjecture 4 \cite{Andrews86}]
For almost all $n$, $V_1(n), V_1(n+1), V_1(n+2)$ and $V_1(n+3)$ are two positive and two negative numbers.
\end{conjA}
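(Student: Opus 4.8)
The plan is to deduce both Conjecture~3 and Conjecture~4 from one asymptotic formula for $V_1(n)$, obtained in two steps: first determine the behavior of $v_1(q)$ as $q$ approaches the unit circle and isolate the dominant singularities, then feed this into Wright's Circle Method to read off coefficient asymptotics. The key preliminary observation is that $v_1$ is \emph{bounded} near $q=\pm1$ (one has $v_1(q)\to\sum_{n\ge0}2^{-n}=2$ as $q\to1$, and $v_1(-1)=\sum_{n\ge0}(-1)^{n(n+1)/2}2^{-n}$), so these play no role; instead the denominator $(-q^2;q^2)_n=\prod_{k=1}^n(1+q^{2k})$ \emph{vanishes} exactly when $q$ is a root of unity of order divisible by $4$, and the $n$-th summand has a pole at such a $q$ as soon as $n\ge\operatorname{ord}(q)/4$. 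The smaller the order, the sooner and faster the poles accumulate, so the strongest singularities sit at the primitive fourth roots of unity $q=\pm i$, where the $n$-th summand has a pole of order $\lceil n/2\rceil$ and $v_1$ has an essential singularity; I expect the $\mathbb Z/4\mathbb Z$-structure of $\{\pm i\}$ to be exactly what produces the period-four sign pattern.

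The heart of the argument is the local analysis at $q=\pm i$. Writing $q=i e^{-z}$ with $z\to0$ in a sector $|\arg z|<\frac\pi2-\delta$, so that $q^2=-e^{-2z}$, we have
\begin{equation*}
v_1\!\left(i e^{-z}\right)=\sum_{n\ge0}\frac{i^{n(n+1)/2}\,e^{-z n(n+1)/2}}{\prod_{k=1}^{n}\bigl(1+(-1)^{k}e^{-2zk}\bigr)},
\end{equation*}
which is a Nahm-type sum in the sense of \cite{GZ,GZknots}, but a nonstandard one: the summand carries the linear exponent $n(n+1)/2$, the Pochhammer is ``alternating,'' and the odd-index factors of the product degenerate as $z\to0$. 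I would follow the Garoufalidis--Zagier philosophy: write the summand as $e^{\Phi_z(n)}$, apply the (signed) quantum-dilogarithm asymptotics to the two subproducts obtained by splitting $\prod_{k=1}^{n}(1+(-1)^{k}e^{-2zk})$ over even and odd $k$ to get $\Phi_z(n)\sim z^{-1}\widetilde V(zn)+(\text{lower order})$, solve the associated Nahm-type critical-point equation for $\widetilde V$, verify that the critical point lies on an admissible steepest-descent contour, and expand. Consistent with the abstract, I expect the critical point to be algebraic and to generate $\mathbb Q(\sqrt{-3})$ --- the critical value being, up to elementary terms, a dilogarithm at a sixth root of unity --- and the outcome to have the shape
\begin{equation*}
v_1\!\left(i e^{-z}\right)\sim z^{\alpha}\,e^{\lambda/z}\,\Theta(z)\bigl(1+O(z)\bigr),\qquad \Re(\lambda)>0,
\end{equation*}
with $\Theta$ a slowly varying (possibly genuinely oscillatory, theta-type) prefactor; by the symmetry $q\mapsto\bar q$ the behavior at $q=-i$ is the complex conjugate, since $V_1(n)\in\R$.

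Next I would run Wright's Circle Method on $V_1(n)=\frac1{2\pi i}\oint v_1(q)q^{-n-1}\,dq$ over a circle of radius $\approx 1-c/\sqrt n$: the integral localizes to short major arcs at $q=\pm i$, while the minor arcs and the arcs around every other root of unity --- where, by the pole bookkeeping above, $v_1$ is bounded or grows strictly more slowly --- contribute an exponentially smaller error, and making this \emph{uniform} is one technical crux. A saddle evaluation of the major arcs in the $z$-variable then gives
\begin{equation*}
V_1(n)=2\,e^{c\sqrt n}\,\Re\!\bigl(i^{-n}\gamma(n)\bigr)+O\!\bigl(e^{c'\sqrt n}\bigr),\qquad 0<c'<c,
\end{equation*}
for an explicit $c>0$ and an explicit prefactor $\gamma(n)$ (a power of $n$ times the relevant transform of $\Theta$) with $\Re\gamma$ and $\Im\gamma$ not identically zero. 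Conjecture~3 follows because $e^{c\sqrt n}$ beats any power while $|\Re(i^{-n}\gamma(n))|$ is bounded below along each residue class modulo $4$; if $\gamma$ genuinely oscillates, one also invokes equidistribution of $\sqrt n$ modulo $1$ together with the facts that the oscillatory factor never hits a zero exactly and that the successive terms of the expansion oscillate out of phase (so they cannot vanish simultaneously), to keep $|V_1(n)|\to\infty$. For Conjecture~4, over four consecutive indices the factors $i^{-n},\dots,i^{-n-3}$ run through $\{1,-i,-1,i\}$ up to a common cyclic shift, and $\sqrt{n+j}=\sqrt n+O(n^{-1/2})$, so $V_1(n),\dots,V_1(n+3)$ are, up to a common positive factor and a negligible error, equal to $\{A,B,-A,-B\}$ in some cyclic order, where $A+iB=i^{-n}\gamma(n)$; this is ``two positive and two negative'' unless $A$ or $B$ is anomalously small, and the exceptional $n$ --- those with $\sqrt n$ atypically close to a sign change of the oscillatory factor --- form a set of density zero, giving ``for almost all $n$,'' and in fact ``all sufficiently large $n$'' if $\lambda$ should turn out to be real.

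The main obstacle is the local analysis at $q=\pm i$: because $v_1$ is not a textbook Nahm sum and its singularity there is essential --- assembled from poles of unbounded order --- one must justify that the formal saddle-point manipulation is an honest asymptotic expansion, identify the critical point and its $\mathbb Q(\sqrt{-3})$ arithmetic, and, most delicately, obtain error bounds uniform over a full sector in $z$, as the Circle Method demands; bounding the contributions of all other roots of unity uniformly is the secondary obstacle. Once the formula for $V_1(n)$ is established, the deductions of Conjectures~3 and~4 are comparatively soft, resting on the growth of $e^{c\sqrt n}$, the $\mathbb Z/4\mathbb Z$-symmetry, and equidistribution of $\sqrt n\bmod 1$.
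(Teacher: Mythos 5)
Your overall architecture matches the paper's: radial asymptotics of $v_1$ near $\pm i$, boundedness at roots of unity whose order is not divisible by $4$, Wright's Circle Method plus a saddle-point evaluation of the major arcs, and equidistribution of $\sqrt n$ modulo $1$ (made quantitative in the paper via Schoi{\ss}engeier's discrepancy bound $D_N\ll N^{-1/2}$ for the sequence $a\sqrt n$) to show the exceptional $n$ have density zero.

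Two details in your sketch should be corrected. The constant you call $\lambda$ does \emph{not} have positive real part: the paper's $V = \operatorname D\bigl(e(1/6)\bigr)\tfrac{i}{8}$ (the Bloch--Wigner dilogarithm at a sixth root of unity, so indeed the $\Q(\sqrt{-3})$ arithmetic you anticipated) is \emph{purely imaginary}, and as a consequence $v_1(\pm i\,e^{-z})$ carries \emph{two} exponentials $e^{\pm V/z}$ of comparable strength, one dominating when $\arg z>0$ and the other when $\arg z<0$. After the saddle this is exactly what produces the slowly oscillating factor $\cos(\sqrt{2|V|n})-(-1)^n\sin(\sqrt{2|V|n})$ in the coefficient asymptotic, and this oscillation --- not the $\Z/4\Z$-symmetry alone --- is why the conjecture reads ``for almost all $n$'' rather than ``all sufficiently large $n$''; your hedge ``if $\lambda$ should turn out to be real'' thus points the wrong way. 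Secondly, applying the Garoufalidis--Zagier saddle-point machinery directly to the Nahm-type sum by splitting the Pochhammer over even and odd $k$ is precisely what the paper argues standard techniques cannot do here. The paper instead splits the \emph{sum} by $n\bmod 2$ (more generally $n\bmod m/2$), rewrites each piece as a Watson-type contour integral, deforms that contour to a steepest-descent contour $\mathcal S$, and only then applies Euler--Maclaurin asymptotics for $(wq;q)_\infty$ followed by the one-dimensional saddle-point method; the residues collected while deforming the contour supply the power series $\phi^{[j]}(z)$ (effectively the ``negative-$n$'' terms), which a purely formal treatment of the sum would risk omitting. Granting the precise asymptotic for $V_1(n)$, your concluding sign-pattern argument is sound and is the same as the paper's.
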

\begin{conjA}[Conjecture 5 \cite{Andrews86}]
For $n \geq 5$  there is an infinite sequence $N_5 = 293, N_6 = 410, N_7 = 545, N_8 = 702,...$, $N_n > 10n^2,...$ such that $V_1(N_n), V_1 (N_n + 1), V_1 (N_n + 2)$ all have the same sign.
\end{conjA}
\begin{conjA}[Conjecture 6 \cite{Andrews86}]
With reference to Conjecture 3, the numbers $|V_1(N_n)|,
|V_1(N_n + 1)|, |V_1(N_n + 2)|$ contain a local minimum of the sequence $|V_1(j)|$. 
\end{conjA}

While of interest in their own right, we now know thanks in particular to work in \cite{ADH, Cohen, ZwegersWave}   that $\sigma(q)$ and related functions are also of interest due to connections to real quadratic fields and Maass waveforms;  Andrews himself also put  $\sigma(q), v_1(q)$ and related functions into a broader context by asking more general questions about $q$-series with bounded and unbounded coefficients in \cite{Andrews86}.    Our main results in this paper are as follows.

\begin{theorem}\label{thm_main}
Andrews' Conjecture~4 is true.
Moreover, Andrews' Conjecture~3 is true in the following sense: $|V_1(n)|\to\infty$ as $n\to\infty$ away from a set of density $0$.
\end{theorem} 

\begin{remark}   
After computational and theoretical investigations, we have modified 
Andrews' Conjecture~3
 to say ``$|V_1(n)|\rightarrow\infty$ as $n\rightarrow \infty$ away from a set of density $0$" as above.    This refined conjecture will follow from the arguments needed to prove Andrews' Conjecture 4  (see  Section \ref{Sec: proof of Andrew's conjectures}).
\end{remark}
\begin{remark} 
  See Section \ref{Sec: explanations} for our explanations of Andrews' Conjecture~5 and Conjecture~6, ultimately relating the asymptotics of $V_1(n)$ to the arithmetic of $\mathbb Q(\sqrt{-3})$.
\end{remark}

\begin{figure}[ht] 
\centering
\begin{overpic}[scale=0.6]
{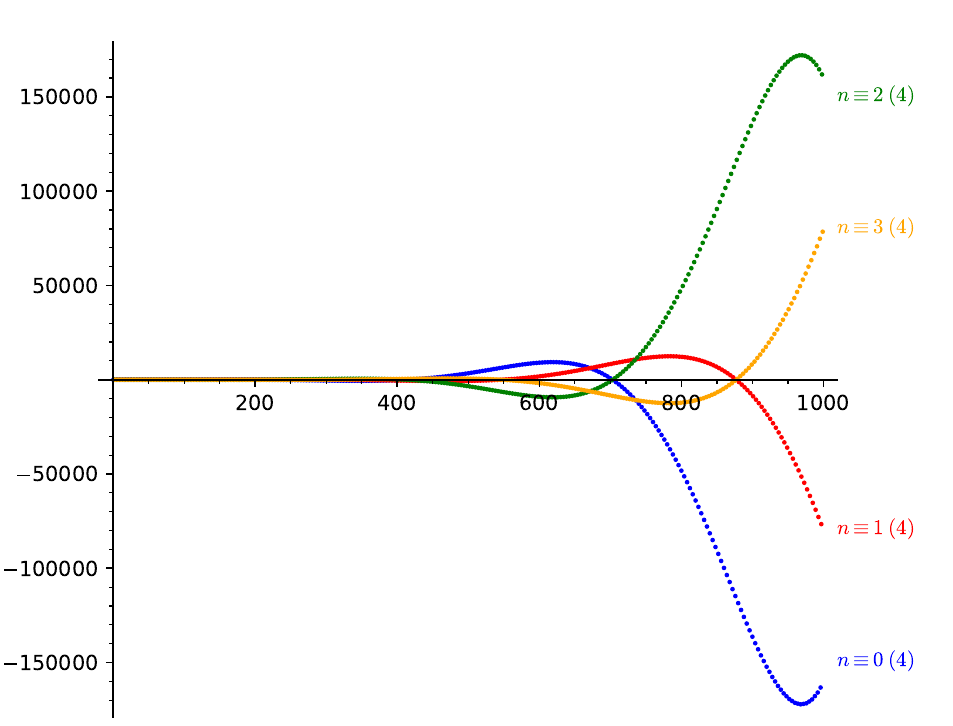}
%section 1
\put(48,40){\scalebox{.5}{
	$\textcolor{sageblue}{+}
	\textcolor{sagered}{-}
	\textcolor{sagegreen}{-}
	\textcolor{sageorange}{+}$}}
%section 2
\put(58,40){\scalebox{.5}{
	$\textcolor{sageblue}{+}
	\textcolor{sagered}{+}
	\textcolor{sagegreen}{-}
	\textcolor{sageorange}{-}$}}
%section 3
\put(69,40){\scalebox{.5}{
	$\textcolor{sageblue}{-}
	\textcolor{sagered}{+}
	\textcolor{sagegreen}{+}
	\textcolor{sageorange}{-}$}}
%section 4
\put(82,40){\scalebox{.5}{$
	\textcolor{sageblue}{-}
	\textcolor{sagered}{-}
	\textcolor{sagegreen}{+}
	\textcolor{sageorange}{+}$}}
\end{overpic}
\caption{$V_1(n)$ for $n=1,\ldots,1000$}
\label{coeffs_V_1}
\end{figure}

Figure \ref{coeffs_V_1} shows the first $1000$ values for $V_1(n)$ together with their sign patterns; one can see that the asymptotics of $V_1(n)$ appears to depend on $n\pmod 4$. Moreover, the sequence can be divided into $4$ reoccurring sections with the following patterns for $\sign(V_1(n)),\ n\equiv n_0\pmod 4$.
\begin{center}
\begin{tabular}{ c | c c c c}
$n_0$ & $0$ & $1$ & $2$ & $3$\\ \hline
Section $1$ & $+$ & $-$ & $-$ & $+$\\
Section $2$ & $+$ & $+$ & $-$ & $-$\\
Section $3$ & $-$ & $+$ & $+$ & $-$\\
Section $4$ & $-$ & $-$ & $+$ & $+$
\end{tabular}
\end{center}
For example, in Figure \ref{coeffs_V_1}, we see the sign pattern $+--+$ between $n=546$ and $n=702$ (Section $1$), the sign pattern $++--$ between $n=703$ and $n=877$ (Section 2), etc. We ultimately establish the sign regularity of $V_1(n)$ in Theorem \ref{thm_main} after establishing the asymptotic behavior of  $v_1(q)$, our second main result.

\subsection{The asymptotics of $v_1$} Here and throughout, we let $e(u): = e^{2\pi i u}$. We make extensive use of the dilogarithm, which is a natural extension of the usual logarithm function, and can be defined by the power series
\begin{align*}
	\Li_2(z) \coloneqq \sum_{n \geq 1} \frac{z^n}{n^2}
\end{align*}
for $|z|<1$, and naturally extended to the cut plane $\C \setminus [1,\infty)$ by analytic continuation, yielding
\begin{align*}
	\Li_2(z) = - \int_{0}^{z} \log(1-u) \frac{du}{u}
\end{align*}
for $z \in \C \setminus [1,\infty)$. We note that we also use a slightly different branching of $\Li_2$ for the proof of Theorem \ref{Thm: radial asymp}, see Section \ref{Sec: proof of conj 1}.

 To state our results, we require the Bloch-Wigner dilogarithm (see e.g.\@ \cite{Zdilog})
\begin{align*}
	\operatorname D(z) \coloneqq \im(\Li_2(z)) +\arg(1-z)\log|z|.
\end{align*}

\begin{theorem}\label{Thm: radial asymp} Let $\zeta = e(\alpha)\in\C$ be a root of unity of order ${ m}\in\N_{0}$. 
	\begin{enumerate}
		\item If $4\nmid { m}$, then $v_1(\zeta e^{-z})=O(1)$ as $z\to 0$ along any ray in the right half-plane.
		\item If $4 | { m}$, then as $z\to 0$, on a ray in the right half-plane with $0\neq |\arg{z}|< \frac \pi 2$, we have 
\beal\label{eq:asymp_v1}
v_1(\zeta e^{-z})
\=&
\exp\left({\frac { 16V}{zm^2}}\right)\;
 \sqrt{\frac {2\pi i}{z}}
\gamma_{(\alpha)}^+(z){(1+O(|z|^L))}\\
&\;+\;
\exp\left({\frac {- 16V}{zm^2}}\right)\;
 \sqrt{\frac {2\pi i}{-z}}
\gamma_{(\alpha)}^-(z)
{(1+O(|z|^L))}
\\
\eeal
for all $L>0$, 
where $V$ is given in terms of the Bloch-Wigner dilogarithm $\operatorname D$ by
\bea
V &\= \operatorname D (e(1/6)) \frac i 8 \= 0.1268677\ldots i,
\eea
and the power series $\gamma^{\pm}_{(\alpha)}(z)\in\C[[z]]$ are defined in Section \ref{sec:pfasympv12}.
\item In particular, for $\zeta = \pm i$, the power series are defined in \eqref{eq:defgammam}, resp. \eqref{eq:defgammam2} and the first terms are given by  
\begin{align*}
&\gamma^{+}_{(1/4)}(z) \= 
\overline{\gamma^{-}_{(3/4)}(z)} \= 
\gamma^+
\left(
1+
\left(\frac 1 3-\frac{77}{216}\sqrt3 \right)iz
+\left(-\frac{89449}{31104}+\frac{647}{648}\sqrt 3\right)z^2+O(|z|^3)
\right),\\
&\gamma^{-}_{(1/4)}(z) \= 
\overline{\gamma^{+}_{(3/4)}(z)} \= 
\gamma^-
\left(
1+
\left(\frac 1 3+\frac{77}{216}\sqrt3 \right)iz
+\left(-\frac{89449}{31104}-\frac{647}{648}\sqrt 3\right)z^2+O(|z|^3)
\right),
\end{align*}
where
\begin{align*}
\gamma^+ \;\coloneqq\;
	\gamma^{+}_{(1/4)}(0)
	\=\gamma^{-}_{(3/4)}(0)
	&\= \frac {1}{2\sqrt[4]{3(2-\sqrt{3})}}
	\= 0.5280518 \ldots,\\
\gamma^- \;\coloneqq\;
	\gamma^{-}_{(1/4)}(0)
	\= 	\gamma^{+}_{(3/4)}(0)
	&\= \frac 1 {2\sqrt [4] {3(2+\sqrt 3)}}
	\=0.2733397 \ldots.
\end{align*}
	\end{enumerate}
\end{theorem}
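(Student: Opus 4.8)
The plan is to read the asymptotics off directly from the defining series: expand the logarithm of the $q$-Pochhammer symbol by Euler--Maclaurin summation, and then evaluate the resulting sum over $n$ by the saddle-point method, with error terms made uniform in $z$ through Wright's Circle Method --- in the spirit of Garoufalidis--Zagier's treatment of Nahm sums. Concretely, write $q=\zeta e^{-z}$ with $\zeta=e(\alpha)$ of order $m$, set $t_n(z):=q^{n(n+1)/2}/(-q^2;q^2)_n$ so that $v_1(\zeta e^{-z})=\sum_{n\ge0}t_n(z)$, and take logarithms,
\[
\log t_n(z)=\tfrac{n(n+1)}{2}\log q\;-\;\sum_{j=1}^{n}\log\!\bigl(1+q^{2j}\bigr).
\]
Since $\zeta$ has finite order, $q^{2j}=e^{-2zj}\zeta^{2j}$ winds periodically (with period dividing $m/\gcd(2,m)$) around circles shrinking to the unit circle, so one splits $j$ --- and later $n$ --- into residue classes modulo $m$ and applies Euler--Maclaurin to each subsum. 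This yields an expansion $\log t_n(z)=z^{-1}\Phi(zn)+\tfrac12\log(\,\cdot\,)+O(z)$ in which the potential $\Phi$ equals $-\tfrac{w^2}{2}$ (from $q^{n(n+1)/2}$) plus an explicit combination of $\Li_2$ at powers of $\zeta^2$ (from the Pochhammer); this is the channel through which the dilogarithm enters.

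If $4\nmid m$, the order of $\zeta^2$ is odd, so $-1\notin\langle\zeta^2\rangle$ and no factor $1+q^{2j}$ degenerates as $z\to0$; indeed the product of $1+\zeta^{2j}$ over one full period of $j$ has modulus $>1$, so $(-q^2;q^2)_n$ grows geometrically in $n$, uniformly for small $z$. Hence $\sum_n t_n(z)$ converges uniformly and $v_1(\zeta e^{-z})=O(1)$ (in fact it tends to $v_1(\zeta)$), proving item (1).

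If $4\mid m$, then $\zeta^2$ has even order $m/2$, so exactly one residue class of $j$ modulo $m/2$ makes $1+q^{2j}$ degenerate like $1-e^{-2zj}$; the $\sim 2n/m$ small factors so produced contribute the singular part of $\Phi$ near $w=0$, while the non-degenerate factors contribute a $\Li_2$ at a primitive sixth root of unity --- which is why $e(1/6)$, and hence the arithmetic of $\mathbb Q(\sqrt{-3})$, appears. The potential $\Phi$ is now genuinely nonconstant, and its critical-point equation is a Nahm-type algebraic equation with a complex-conjugate pair of solutions $w_\pm$, whose critical values are $\Phi(w_\pm)=\pm\,16V/m^2$ for $V=\tfrac i8\operatorname{D}(e(1/6))$. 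Localizing $\sum_n t_n(z)$ near $n\approx w_\pm/z$, replacing the localized sum by a Gaussian integral, and bounding the tails then yields the two terms of \eqref{eq:asymp_v1}: the prefactors $\sqrt{2\pi i/(\pm z)}$ are the Gaussian normalizations $\bigl(\mp z\,\Phi''(w_\pm)/(2\pi i)\bigr)^{-1/2}$ up to the algebraic value of $\Phi''(w_\pm)$, and $\gamma^{\pm}_{(\alpha)}$ collects that second-derivative factor together with the non-exponential part of $t_n$ evaluated at the saddle. For item (3) one specializes to $m=4$, $\alpha\in\{1/4,3/4\}$: conjugation $\zeta\mapsto\bar\zeta$, i.e.\ $\alpha\mapsto1-\alpha$, interchanges the two saddles (giving $\gamma^{+}_{(1/4)}=\gamma^{-}_{(3/4)}$ and $\gamma^{-}_{(1/4)}=\gamma^{+}_{(3/4)}$), and evaluating $\Phi''(w_\pm)$ and the finite product of the factors $1+q^{2j}$ at $w_\pm$, then simplifying the resulting numbers over $\mathbb Q(\sqrt3)$, produces $\gamma^{+}=\tfrac12(3(2-\sqrt3))^{-1/4}$ and $\gamma^{-}=\tfrac12(3(2+\sqrt3))^{-1/4}$.

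The main obstacle is the rigorous saddle-point analysis in the case $4\mid m$: justifying the exchange of summation with the Gaussian approximation, with an error uniform as $z\to0$ along \emph{rays} (not merely along $\R_{>0}$), and carefully tracking the finitely many branch choices for $\Li_2$ and the residue-class contributions so that the two conjugate saddles assemble into exactly \eqref{eq:asymp_v1}. A secondary delicate point is checking that the non-resonant arithmetic progressions of $n$ --- those not sitting at a saddle --- contribute only a relative $O(|z|)$ rather than a spurious term of size $z^{-1/2}$.
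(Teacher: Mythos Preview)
Your overall architecture—Euler–Maclaurin on the Pochhammer, then saddle-point on the outer sum—is in the right spirit, and your treatment of part~(1) is correct and essentially the paper's (the paper writes the limit explicitly as $2\sum_{s=0}^{m-1}\zeta_{2m}^{\ell s(s+1)}/(-\zeta_m^{2\ell};\zeta_m^{2\ell})_s$, but the mechanism is the same: when $4\nmid m$ the order of $\zeta^2$ is odd, no factor $1+\zeta^{2j}$ vanishes, and the denominator grows geometrically).

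Where your proposal has a genuine gap is exactly the step you flag as ``the main obstacle.'' Because $V$ is purely imaginary, along any ray with $\arg z\neq 0$ the saddles $w_\pm/z$ in your $n$-variable are non-real, so ``localizing $\sum_n t_n(z)$ near $n\approx w_\pm/z$ and replacing by a Gaussian integral'' has no direct meaning for a sum over $n\in\Z_{\ge 0}$. The paper's key device—which you do not mention—is a \emph{Watson contour integral}: after splitting into residue classes $v_1^{[n_0]}$ modulo $m/2$, each subsum is rewritten as $\int_{L_\infty}(\cdots)\,ds/\sin(2\pi(s-n_0)/m)$, the cosecant's poles reproducing the integer terms. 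Only then is Euler–Maclaurin applied to the Pochhammer \emph{in the integrand}, and the contour deformed through the complex saddle $v_0=\pm\pi/(3m)$; the residues swept up during this deformation form extra power-series tails $\phi^{[n_0]}_{(\alpha)}(z)=\sum_{n<0,\,n\equiv n_0}q^{n(n+1)/2}/(-q^2;q^2)_n$ that your outline does not account for. Two smaller corrections: the sixth root of unity $e(1/6)$ arises not from the Euler–Maclaurin step but from the saddle equation $(1-X)^2=-X$ with $X=e^{-miv_0}$; and Wright's Circle Method plays no role in this theorem—the paper uses it only later, in Section~\ref{sec_3}, to pass from the radial asymptotic of $v_1$ to the coefficient asymptotic for $V_1(n)$.
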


	\begin{remarks} \,
		\begin{enumerate}
	\item We have that $V = \frac{\mathcal{G}i}{8},$ where $\mathcal{G} = 1.0149...$ is the maximum of the Bloch-Wigner dilogarithm function $\operatorname D(z)$, attained at $e(1/6)$ \cite{Zdilog}.    We note that $\mathcal G$ is also the minimal hyperbolic volume of non-compact hyperbolic 3-manifolds. Since this volume is attained by the Gieseking manifold \cite{colin}, $\mathcal G$ is also known as Gieseking's constant.  
\item Note that when $z$ approaches zero on a given ray in the right half-plane with $\arg(z)>0$, the first term in \eqref{eq:asymp_v1} will be exponentially large while the other one will be exponentially small and vice versa if $\arg(z)<0$.
		\end{enumerate}
	
	\end{remarks}

\subsection{The asymptotics of $V_1$}

We use Theorem \ref{Thm: radial asymp} to ultimately establish the following theorem giving the first term asymptotic approximation for $V_1(n)$.

\begin{theorem}\label{Conj3} 
	As $n \to \infty$ we have
		\begin{align}\label{v1nasy-new1}
		V_1(n)
		\=&(-1)^{\lfloor \frac n 2 \rfloor}\ 
		\frac {e^{\sqrt{2|V|n}}} {\sqrt{n}}
		{(\gamma^+ + (-1)^n\gamma^-)}
		\left(\cos(\sqrt{2|V| n}) -{{(-1)^{n}}} \sin(\sqrt{2|V| n})\right) \left(1+O\left(n^{-\frac{1}{2}} \right)\right) \notag\\
		& + O\left( n^{-\frac{1}{2}} e^{\sqrt{\frac{|V|n}{2}}} \right).
	\end{align} 
\end{theorem}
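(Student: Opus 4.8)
The plan is to extract the asymptotics of $V_1(n)$ from the analytic behavior of $v_1(q)$ near roots of unity, via Wright's variant of the Circle Method. By Cauchy's theorem, $V_1(n) = \frac{1}{2\pi i}\oint \frac{v_1(q)}{q^{n+1}}\,dq$, where we integrate over a circle $|q| = e^{-1/N}$ for $N$ of size roughly $\sqrt{n}$ (to be optimized). By Theorem \ref{Thm: radial asymp}, the only roots of unity near which $v_1(q)$ grows are those of order divisible by $4$; the dominant contribution comes from the order-$4$ roots $q = \pm i$, where $v_1(\zeta e^{-z})$ has size $e^{16|V|/|z|m^2} = e^{|V|/|z|}$ (taking $m=4$), while all other singular points — in particular $q$ near $1$ where $m=1$ gives exponential growth $e^{16|V|/|z|}$ — must be shown to contribute strictly less. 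Wait: $m=1$ is not divisible by $4$, so by part (1) of Theorem \ref{Thm: radial asymp}, $v_1$ is actually $O(1)$ there; similarly the genuinely dangerous competing arcs are those of order $8, 12, \dots$, where the growth rate $e^{16|V|/|z|m^2}$ is smaller by a factor depending on $m$, hence subdominant after accounting for arc lengths. So the first step is to set up the major arcs around $q = i$ and $q = -i$ and minor arcs everywhere else, and bound the minor-arc contribution by $O(n^{-1/2} e^{\sqrt{|V|n/2}})$ — this matches the stated error term and should come from the order-$8$ roots of unity.

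On each major arc, I would substitute $q = \zeta e^{-z}$ with $\zeta = \pm i$ and $z = \frac{1}{N} + it$ small, and insert the two-term asymptotic expansion \eqref{eq:asymp_v1}, namely $v_1(\zeta e^{-z}) = e^{|V|/z}\sqrt{2\pi i/z}(\gamma^+_{(\alpha)} + O(|z|)) + e^{-|V|/z}\sqrt{2\pi i/(-z)}(\gamma^-_{(\alpha)} + O(|z|))$, using $16V/m^2 = V$ for $m=4$. The integral $\frac{1}{2\pi i}\int \zeta^{-n} e^{nz} e^{|V|/z} \sqrt{2\pi i/z}\, dz$ is then a classical saddle-point / Bessel-type integral: the saddle of $nz + |V|/z$ sits at $z_0 = \sqrt{|V|/n}$, and a standard stationary-phase evaluation (or recognizing it as an $I_{-1/2}$-type Bessel integral) yields a main term proportional to $n^{-1/2} e^{2\sqrt{|V|n}}$ — but note $2\sqrt{|V|n} = \sqrt{4|V|n} = \sqrt{2\cdot 2|V|n}$... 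I must be careful: since $V$ is purely imaginary, $|V|/z$ with $z$ real positive is purely imaginary, so the saddle point analysis produces \emph{oscillation} rather than pure exponential growth — the real part of the exponent at the saddle gives $e^{\sqrt{2|V|n}}$ and the imaginary part gives the $\cos(\sqrt{2|V|n}) - (-1)^n\sin(\sqrt{2|V|n})$ factor. The prefactor $\zeta^{-n} = (\pm i)^{-n}$ combines across the two conjugate arcs $\zeta = i$ and $\zeta = -i$: writing $(i)^{-n} + (-i)^{-n}$ and its variants produces exactly the $(-1)^{\lfloor n/2\rfloor}$ factor and the $\gamma^+ + (-1)^n\gamma^-$ combination (using the identifications $\gamma^+_{(1/4)} = \gamma^-_{(3/4)} = \gamma^+$ and $\gamma^-_{(1/4)} = \gamma^+_{(3/4)} = \gamma^-$ from part (3)). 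The second term $e^{-|V|/z}$ in \eqref{eq:asymp_v1} contributes the complex-conjugate saddle and combines with the first to give a real-valued answer.

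The main obstacle, I expect, is twofold. First, making the minor-arc bound rigorous: one needs a uniform $O$-estimate for $v_1(\zeta e^{-z})$ for $\zeta$ ranging over \emph{all} roots of unity (not just fixed ones), with explicit control of the implied constants and the dependence on $m = \mathrm{ord}(\zeta)$ and on $\arg z$; Theorem \ref{Thm: radial asymp} as stated is a statement along rays for \emph{fixed} $\zeta$, so upgrading it to the uniformity needed for the Circle Method — including the transition regions between major and minor arcs, where $\arg z$ is bounded away from $0$ but $|z|$ is still small — will require care, likely handled by a Farey dissection with radius $N \asymp \sqrt{n/|V|}$ chosen so that the order-$4$ arcs dominate. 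Second, the bookkeeping of the oscillatory saddle-point integral: because the exponent $nz + |V|/z$ has a purely imaginary term, the contour must be deformed through the (complex) saddle $z_0 = \sqrt{|V|/n}\,e^{\pm i\pi/4}$, and one must verify that the tails along the deformed contour and the error from replacing $v_1$ by finitely many terms of its expansion are both absorbed into the claimed $(1 + O(n^{-1/2}))$ relative error and the additive $O(n^{-1/2} e^{\sqrt{|V|n/2}})$ term. Once these estimates are in place, collecting the contributions of $\zeta = i$ and $\zeta = -i$ and simplifying the resulting trigonometric expression gives \eqref{v1nasy-new1}.
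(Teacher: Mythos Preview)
Your proposal is correct and follows essentially the same approach as the paper: Wright's Circle Method with major arcs at $\zeta=\pm i$ evaluated via the saddle-point method applied to $\int e^{nz+V/z}z^{-1/2}\,dz$, and minor arcs controlled by the order-$8$ roots of unity to give the $O(n^{-1/2}e^{\sqrt{|V|n/2}})$ error. One notational slip: the exponent in the major-arc integrand is $V/z$, not $|V|/z$ --- it is the fact that $V$ is purely imaginary that makes the saddle complex (at $\sqrt{V/n}=\sqrt{|V|/n}\,e^{i\pi/4}$) and produces the $e^{\sqrt{2|V|n}}(\cos\pm\sin)(\sqrt{2|V|n})$ structure you describe; your later remarks show you have this right, but the intermediate line with $e^{|V|/z}$ should be corrected.
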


The sequence $V_1(n) {e^{-\sqrt{2|V|n}}} {\sqrt{n}}$ is plotted in Figure \ref{figure_approx}.
\begin{figure}[ht]
	\centering
	\includegraphics[scale=0.6]{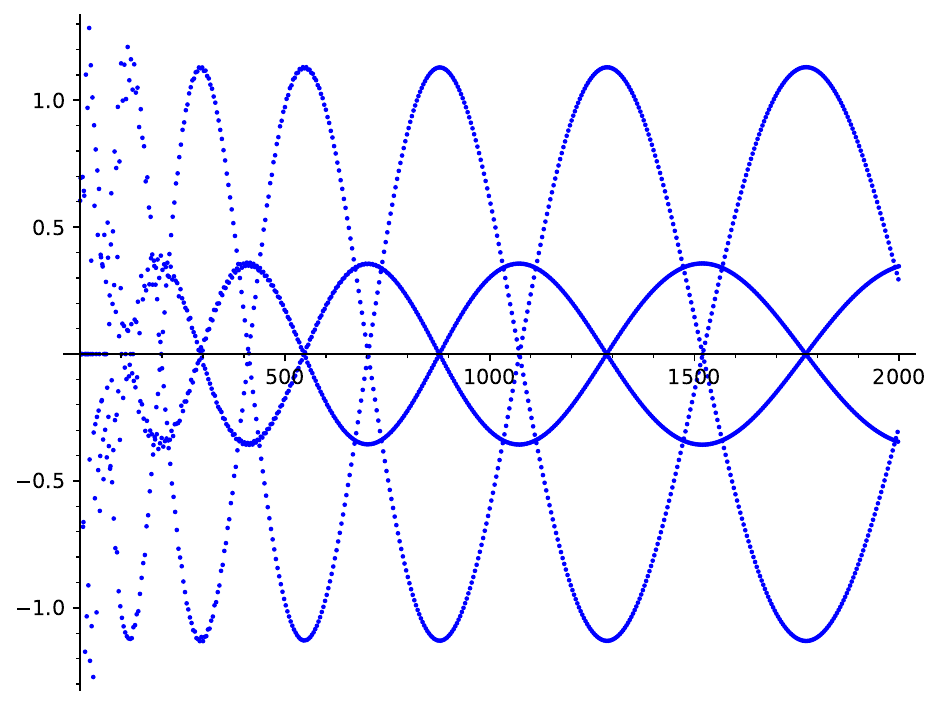}
	\caption{$V_1(n) {e^{-\sqrt{2|V|n}}} {\sqrt{n}}$ for $0\leq n\leq2000$}
	\label{figure_approx}
\end{figure}

From Figures \ref{coeffs_V_1} and \ref{figure_approx} it appears that $V_1(n)$ is arbitrarily small infinitely often. We discuss this observation alongside  Andrews' Conjecture~5 and Conjecture~6 in Section \ref{Sec: explanations}, giving candidates for these points and providing strong numerical and heuristic evidence.

In fact, our methods give a stronger asymptotic for $V_1(n)$, namely, the (non-convergent) asymptotic formula 
\begin{multline}\label{eqn:V1n-asymp1}
	V_1(n) = \sum_{m \geq 1}   \sum_{\substack{k = 1 \\ \gcd(k,4m)=1}}^{4m} \frac{\zeta_{4m}^{-kn}}{\sqrt{2i} \sqrt{n}} \exp\left( \frac{2 \sqrt{Vn}}{m} \right) \gamma^+_{\left(\frac{k}{4m}\right)} \left( \frac{1}{m} \sqrt{\frac{V}{n}} \right) \left(1+O\left(n^{-\frac{1}{2}}\right)\right) \\
	+ \frac{\zeta_{4m}^{-kn}}{\sqrt{-2i} \sqrt{n}} \exp\left( \frac{2 \sqrt{-Vn}}{m} \right) \gamma^-_{\left(\frac{k}{4m}\right)} \left( \frac{1}{m} \sqrt{\frac{-V}{n}} \right) \left(1+O\left(n^{-\frac{1}{2}}\right)\right).
\end{multline}
Our (non-convergent) asymptotic in \eqref{eqn:V1n-asymp1} is reminiscent of the original formula of Hardy and Ramanujan for $p(n)$ before it was extended to Rademacher's celebrated exact formula for $p(n)$, as well as the famous Andrews-Dragonette formula for the coefficients of the mock theta function $f(q)$ \cite{AndrewsDragonette}, and their conjectured exact formula which was proved by Bringmann and Ono in their breakthrough paper \cite{BO}. Rademacher's beautiful exact formula for $p(n)$, as it turns out, is intimately connected to the theory of Maass-Poincar\'e series.  It would be of interest to explore similar formulae and connections   for $V_1(n)$ related to \eqref{eqn:V1n-asymp1}. 

To establish \eqref{eqn:V1n-asymp1}, our calculations are as in Section \ref{Sec: major arc} using the Circle Method and collecting contributions from all roots of unity as we now explain. Let $\zeta_{4m}^{k} \coloneqq e^{\frac{2\pi i k}{4m}}$. First note that as $m \to \infty$ the $4m$-th roots of unity become dense on the unit circle (and the other roots of unity do not give any growing asymptotic contribution by Theorem \ref{Thm: radial asymp}), and that the asymptotic formulae given in Theorem \ref{Thm: radial asymp} hold in cones across each root of unity. In turn, this means that we simply need to sum the contributions from each $4m$-th root of unity to obtain the claimed asymptotic behavior, and there is no error term being bound.
	
	At each root of unity $\zeta_{4m}^{k}$ with $\gcd(k,4m) =1$ (so that it is primitive), we simply repeat the arguments given in Section \ref{Sec: major arc}, but instead considering an arc around $\zeta_{4m}^k$ parametrized by $q = \zeta_{4m}^k e^{-\lambda + i \theta}$ with $\theta \in (-\delta, \delta)$. After plugging in the asymptotic for $v_1(\zeta_{4m}^k e^{-z})$ given in Theorem \ref{Thm: radial asymp}, the same calculations using the saddle-point method then give \eqref{eqn:V1n-asymp1}.
	
Extracting the $m=1$ term and combining the coefficients yields the first-term approximation given in Theorem \ref{Conj3}.
Of course, since $\gamma_\cdot^\pm$ are power series, the error terms make this asymptotic lose some meaning. This could be remedied by developing techniques to control the radial asymptotic to arbitrary precision. In particular one would need much stronger methods than the saddle point method provides us.

 Returning to our main results, we remark that our  methods used to prove Theorem \ref{thm_main} differ from the methods used in \cite{ADH} to prove Andrews' related conjectures on $\sigma(q)$, which are not obviously applicable; however, it would be of interest to find a $q$-series identity for $v_1(q)$ using Bailey pairs or other methods  which lends itself to revealing more information about the behavior of $V_1(n)$ in an  analogous way.  Instead, our methods are inspired by both newer methods of Garoufalidis and Zagier on asymptotics of Nahm sums \cite{GZ, GZknots}, and older methods in Analytic Number Theory including Wright's Circle Method and the saddle-point method. In particular, standard techniques in the literature are not well-suited to studying the radial asymptotic behavior of $v_1$. Instead, in order to prove Theorem \ref{Thm: radial asymp} we determine an integral representation of $v_1$ which is similar to Watson's contour integral \cite{GR, Watson1910}. Our novel approach has applications beyond simply the study of the function $v_1$; in particular it can be used to compute the asymptotic behavior of  similar $q$-hypergeometric series  which are not amenable to classical techniques.

The aforementioned sums named after Nahm were introduced in \cite{Nahm} in relation to characters of rational conformal field theories.

They are a special class of $q$-hypergeometric series \cite{GR} given in the one-dimensional case by
$\sum_{n \geq 0}   {q^{An^2/2+bn+c}}/{(q;q)_n},$ 
for $A\in\Q_{>0}, b,c\in\Q$.
 In part motivated by the famous Rogers-Ramanujan identities \cite{AndrewsThy}, Zagier classified all one-dimensional modular Nahm sums in \cite{Zdilog}, thereby proving a conjecture of Nahm in the one-dimensional case. 
 
 These sums, in addition to having natural applications elsewhere, have previously-studied asymptotic properties which inform our study of Andrews' series $v_1(q)$ here. For example,  Nahm sums come equipped with a Nahm equation; in the one-dimensional case the equation $1-X=X^A$, whose solutions are conjecturally  connected to the modularity of the Nahm sum at hand \cite{CGZ,VZ,Zdilog}. In particular, the dilogarithm of the unique solution in $(0,1)$ determines the asymptotic behavior of the Nahm sum as $q$ approaches a root of unity.
 
 The $q$-hypergeometric series $v_1(q)$ studied here is nearly a Nahm sum, up to the change in sign appearing in the $q$-Pochhammer symbols $(-q^2;q^2)_n$ in the denominators of its summands.   For $v_1(q)$, the  analogue of the  Nahm-equation becomes $(1-Q)^2=-Q$, see \eqref{eq:nahmequation} with $Q=e^{-4iv_0}$, and the dilogarithms of its solutions $e(\pm 1/6)$ appear in the asymptotics of $v_1(q)$ in Theorem~\ref{Thm: radial asymp}.
In particular, Theorem~\ref{Thm: radial asymp} implies that $v_1(q)$ is not a modular form. For more on these $q$-series and related recent work, see the previously mentioned \cite{GZ, GZknots,Sthesis, Zdilog} and \cite{Nahm}, as well as work of Calegari-Garoufalidis-Zagier \cite{CGZ} and Vlasenko-Zwegers \cite{VZ}, and their references.  

We further note that Andrews also conjectures in \cite{Andrews86} that the coefficients of three additional functions have behavior similar to that of the sequence $V_1(n)$. It is natural to expect that the analysis and methods of proof given here for $v_1(q)$ and $V_1(n)$ could also be applied to study the three additional functions in \cite{Andrews86}, as well as other similar functions (e.g., see \cite{21LovejoyCologne}). Also note that $v_1$ is closely related to the function $\mathscr{O}$ studied by Jang in \cite{MJ}.

The remainder of the paper is structured as follows. In Section \ref{sec_prelim}, we  provide some background on Wright's Circle Method and the saddle-point method used later in the paper, and also 
establish some preliminary asymptotic results in the spirit of recent work of Garoufalidis and Zagier.  In Section~\ref{Sec: proof of conj 1} we prove Theorem \ref{Thm: radial asymp} on the asymptotics of $v_1(q)$.  In Section \ref{sec_3} we prove Theorem \ref{Conj3} on the asymptotics of $V_1(n)$.  In Section \ref{Sec: proof of Andrew's conjectures}, we prove Theorem \ref{thm_main} (Andrews' Conjectures 3 and 4). Finally, in Section \ref{Sec: explanations} we offer explanations for Andrews' Conjectures~5 and~6 regarding the coefficients $V_1(n)$, ultimately relating them to the arithmetic of $\mathbb Q(\sqrt{-3})$.

\section*{Acknowledgments}
The authors would like to thank Kathrin Bringmann, Jeremy Lovejoy, Andreas Mono, Campbell Wheeler and Don Zagier for enlightening conversations and helpful comments on the paper.
 Moreover, the authors would like to thank the referee for their careful reading and helpful suggestions. 
\section{Preliminaries}\label{sec_prelim}

\subsection{The saddle-point method}\label{sec_sp} To asymptotically estimate certain integrals that appear in our work, we make use of the saddle-point method. We use the ideas presented by Olver \cite{Olver}, O'Sullivan \cite{OSullivan}, as well as the notes \cite{Kap}. For convenience, we recall its essence here. Consider an integral of the shape
 \begin{align}\label{eqn: saddle integral}
 	\int_\Gamma f(z) e^{A g(z)} dz
 \end{align}
where $f,g$ are complex analytic functions and $\Gamma$ is a contour in the complex plane. We wish to approximate the integral as $A \to \infty$.

Since $f,g$ are analytic, we are able to continuously deform the contour $\Gamma$ without changing the value of the integral (if one instead deforms the contour over poles of the integrand, one simply needs to take care to include residues). The points where the real part of $g(z)$ is maximized and the imaginary part of $g(z)$ is constant are called {\it saddle-points}, and are zeros of $g'(z)$.

By shifting the path $\Gamma$ to a path running through the saddle-point and making appropriate shifts of the integration variable to center on the zero of $g(z)$, the integral \eqref{eqn: saddle integral} may be rewritten in terms of Gaussian-like integrals. These integrals may then be approximated by well-known means for large values of $A$.

\subsection{Wright's Circle Method}\label{Sec: WCM prelim}
Wright \cite{Wright1,Wright2} developed a modified version of Hardy and Ramanujan's Circle Method. Wright's work provides a very general approach to obtaining the asymptotic behavior of Fourier coefficients of generating functions whose radial asymptotic behavior towards roots of unity is known. We recall the essence of Wright's Circle Method here for the convenience of the reader. 

Consider a generating function $f(q) \coloneqq \sum_{n \geq 0} A(n) q^n$ with radius of convergence equal to one. The central idea is to use Cauchy's integral theorem to recover the coefficients $A(n)$ as
\begin{align*}
	A(n) = \frac{1}{2\pi i} \int_C \frac{f(q)}{q^{n+1}} dq,
\end{align*}
where $C$ is a circular contour of radius less than one, transversed precisely once in the anticlockwise direction. Since we are able to choose the radius of $C$, we pick a radius that tends to $1$ as $n \to \infty$ (meaning that $C$ tends to the unit circle).  Now assume that in arbitrarily wide cones inside the unit disk toward roots of unity the generating function $f(q)$ has known asymptotic behavior.
We then choose to place so-called major arcs around the roots of unity where $f(q)$ has the largest growth, and so-called minor arcs everywhere else. Using varying asymptotic methods, one is able to precisely determine the contribution of the major arcs to the asymptotic growth of the coefficients $A(n)$, which we call the main term and denote by $M(n)$. In the present paper, we use the saddle-point method. On the other hand, contributions from the minor arc are bounded more crudely in an error term, denoted $E(n)$. Overall this yields an asymptotic of the form $A(n) = M(n) + E(n)$.

Although Wright's Circle Method gives weaker bounds than the original version of Hardy and Ramanujan (and loses the possibility of exact formulae), it is much more flexible for dealing with non-modular generating functions, and has seen extensive use in the literature in recent years.

\subsection{Asymptotics}
Using the usual Bernoulli numbers $B_0=1$, $B_1=-\frac 1 2$, $B_2=\frac 1 6,\ldots$, the Bernoulli polynomials are defined for $n\in \mathbb N_0$ by
\bea
	B_n(X) := \sum_{k=0}^n \binom n k B_{k} X^{n-k}.
\eea

	 For $\varphi\in\C, |\varphi|=1, |\arg \varphi|<\frac \pi 2$ let $\widetilde \log$ be the logarithm such that $\Li_1^\varphi (e^{-iv}) = -\widetilde\log(1-e^{-iv})$ has branch cuts whenever $\re((v+2\pi n)/\varphi) = 0$ for some $n\in\Z$ and $\re(v)>0$. In other words, the principal branch cuts are rotated by $\varphi$, cf. Figure \ref{fig:branchingLi2tilde}. 
	For $v\in\C$ not on a branch cut we define the dilogarithm
	\bea
		\Li_2^\varphi(e^{-iv}) \= 
		\int_{0}^\infty
		\Li_1^\varphi\left(e^{-\varphi x-iv}\right)
		dx\\
	\eea
	where we avoid the branch cuts of $\Li_1^\varphi\left(1-e^{-\varphi x-iv}\right)$ (cf. Figure \ref{fig:branchingLi2tilde}).
	Then $\Li_2^{\varphi}(e^{-iv})$ has the same branch cuts as $\Li_1^{\varphi}(e^{-iv})$ and jumps by $2\pi v$ when $v$ crosses a branch cut (cf. \cite{Zdilog}).
	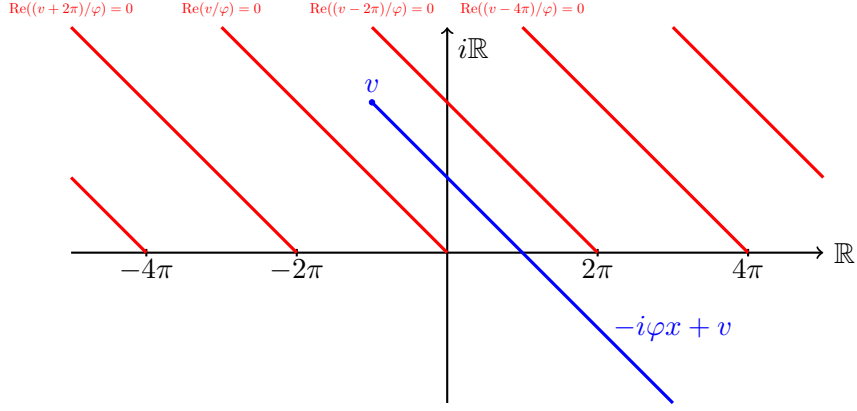
\begin{figure}
	\centering
	\begin{tikzpicture}
		\draw[black, thick,->] (-5,0) -- (5,0)	node[anchor=west]{$\R$};
		\draw[black, thick] (2,-.05) --  (2,.05) node[anchor=north]{$2\pi$};
		\draw[black, thick] (4,-.05) --  (4,.05) node[anchor=north]{$4\pi$};
		\draw[black, thick] (-2,-.05) --  (-2,.05) node[anchor=north]{$-2\pi$};
		\draw[black, thick] (-4,-.05) --  (-4,.05) node[anchor=north]{$-4\pi$};
		\draw[black, thick,->] (0,-2) -- (0,3)	node[anchor=north west]{$i\R$};
		\draw[red, very thick] (0,0) -- (-3,3);
		\node[red, anchor=south] at (-3,3) {\scalebox{.5}{$\re(v/\varphi) = 0$}};
		\draw[red, very thick] (2,0) -- (-1,3);
		\node[red, anchor=south] at (-1,3) {\scalebox{.5}{$\re((v-2\pi)/\varphi) = 0$}};
		\draw[red, very thick] (4,0) -- (1,3);
		\node[red, anchor=south] at (1,3) {\scalebox{.5}{$\re((v-4\pi)/\varphi) = 0$}};
		\draw[red, very thick] (5,1) -- (3,3);
		\draw[red, very thick] (-2,0) -- (-5,3);
		\node[red, anchor=south] at (-5,3) {\scalebox{.5}{$\re((v+2\pi)/\varphi) = 0$}};
		\draw[red, very thick] (-4,0) -- (-5,1);
		\draw[blue, very thick] (3,-2) -- (-1,2);
		\filldraw[blue] (-1,2) circle (1pt) node[blue, anchor=south] at (-1,2) {$v$};
		\node[blue] at (3,-1) {$-i\varphi x +v$};
		node[anchor=west]{Intersection point};
	\end{tikzpicture}
	\caption{The branch cuts of $\Li_s^\varphi(e^{-iv})$, $s=1,2$ and $-i\varphi x +v$ for $x\geq 0$}
	\label{fig:branchingLi2tilde}
	\end{figure}

Moreover, for $m\in\Z_{\leq{0}}$ the polylogarithm $\Li_m$ is defined inductively by
\beal\label{eq:defpolylog}
	\Li_{m}^{ \varphi}(z) \;:=\;  \frac 1 z \frac {d}{d z} \Li_{m+1}^{ \varphi}(z)
\eeal
where we note that $\Li^\varphi_{m}$ for $m\leq 0$ is independent of $\varphi$ and the branching of $\Li_1^\varphi$. Therefore, we will omit the index for $m\leq 0$ sometimes.

With this setup, we require some basic asymptotic estimates. 

\begin{lemma}\label{asymp_lemma} The following are true.  
\begin{enumerate}
 	\item Let $\alpha\in \R$. As $|t|\to\infty$ for $t$ on a ray in $\C$ we have with $\pm = \sign(\Re(t))$ 
			\bea
				\sin\left(\alpha (it-n_0)\right)
				\= \pm \frac{1}{2i} \exp\left(\pm \alpha(t -n_0i)\right)(1+{o}(|t|^{-L}))\\
			\eea
			for all $L\in\N$.\medskip
	\item Let $\zeta = e(\frac r m)$ be a root of unity of order $m\in\N$. As $z\to0$ in the right half-plane, i.e. $q=\zeta e^{-z}\to \zeta$, we have
\bea
(q;q)_\infty
\=
\exp\left(-\frac{\pi^2}{6m^2z} + \frac z{24}\right)
\sqrt{\frac{2\pi}{mz}} e\left(\frac{s(-r,m)}{2}\right) (1+o(|z|^L))
\eea
for all $L\in\N$. Here, $s(r,m)$ is the Dedekind sum defined by
		\bea
			s(r,m) \;:=\; \sum_{l=1}^{m-1}\frac l m \left(\frac{rl}m - \left\lfloor{\frac{rl}{m}}\right\rfloor-\frac 1 2\right).
		\eea
		\item Assuming the notation above, if $m$ is even we have
		\bea
			(-q;q)_\infty
			\= e^{-\frac{\pi^2}{6m^2z}+\frac z {12}}
			Q(\zeta) (1+o(|z|^L))
		\eea
		as $z\to 0$ in the right half-plane for all $L\in\N$ where
		\beal\label{eq:Qdef}
			Q(\zeta) \= e\left(\frac{s(-r,\tfrac m2)-s(-r,m)}{2}\right).
		\eeal
\end{enumerate}\end{lemma}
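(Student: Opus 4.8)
All three parts rest on standard transformation identities, and I would handle them in turn.

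Part (1) is elementary. Expanding $\sin w=\tfrac{1}{2i}(e^{iw}-e^{-iw})$ with $w=\alpha(it-n_0)$ gives $\sin(\alpha(it-n_0))=\tfrac{1}{2i}\big(e^{-\alpha(t+in_0)}-e^{\alpha(t+in_0)}\big)$. On a ray with $0\neq|\arg t|<\tfrac\pi2$ exactly one of the two exponentials dominates, the choice being governed by $\pm=\sign(\Re t)$, while the ratio of the subdominant to the dominant term equals $\exp(-2\sign(\Re t)\,\alpha t)$, which decays faster than any power of $|z|$. Retaining the dominant term yields the claimed formula.

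For part (2) I would invoke the modular transformation of the Dedekind $\eta$-function. Write $q=\zeta e^{-z}=e^{2\pi i\tau}$ with $\tau=\tfrac rm+\tfrac{iz}{2\pi}$ (so $\gcd(r,m)=1$, since $\zeta$ has order $m$), so that $(q;q)_\infty=q^{-1/24}\eta(\tau)$. Choose $\gamma\in SL_2(\Z)$ with bottom row $(m,-r)$, which exists since $\gcd(r,m)=1$ and which sends the cusp $\tfrac rm$ to $i\infty$; a short computation gives $c\tau+d=\tfrac{imz}{2\pi}$ and $\gamma\tau=\tfrac am+\tfrac{2\pi i}{m^2z}$, so that $\Im(\gamma\tau)=\tfrac{2\pi}{m^2}\Re(1/z)\to+\infty$ as $z\to0$ in the right half-plane. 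Combining Dedekind's functional equation $\eta(\gamma\tau)=\varepsilon(\gamma)\{-i(c\tau+d)\}^{1/2}\eta(\tau)$ --- whose multiplier $\varepsilon(\gamma)$ is given explicitly in terms of the Dedekind sum $s(-r,m)$ together with a $\tfrac{a+d}{12m}$ term --- with the $q$-expansion $\eta(\gamma\tau)=e^{2\pi i\gamma\tau/24}\prod_{n\ge1}(1-e^{2\pi in\gamma\tau})=e^{-\pi^2/(6m^2z)}e^{\pi ia/(12m)}\big(1+O(e^{-\kappa/|z|})\big)$ for some $\kappa>0$, and solving for $\eta(\tau)$, one checks that all spurious phases cancel --- the $e^{\pi ia/(12m)}$, the $\tfrac{a+d}{12m}$ contribution, and the phase of $q^{-1/24}$ --- leaving precisely $e(s(-r,m)/2)$, while $\{-i(c\tau+d)\}^{-1/2}=\sqrt{2\pi/(mz)}$. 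The remaining error is exponentially small in $1/|z|$ (the prefactor $q^{-1/24}=\zeta^{-1/24}e^{z/24}$ being bounded and analytic, it contributes nothing beyond the main term's analytic behaviour). This is a classical fact that one could alternatively simply cite.

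For part (3) I would reduce to part (2) via the identity $(-q;q)_\infty=(q^2;q^2)_\infty/(q;q)_\infty$. Since $m$ is even and $\gcd(r,m)=1$, one has $\gcd(2r,m)=2$, so $\zeta^2=e(2r/m)=e(r/(m/2))$ is a primitive root of unity of order $m/2$ and $q^2=\zeta^2e^{-2z}$. Applying part (2) to the numerator with $(\zeta,z,m)$ replaced by $(\zeta^2,2z,m/2)$ gives $(q^2;q^2)_\infty\sim e^{-\pi^2/(3m^2z)}\sqrt{2\pi/(mz)}\,e(s(-r,m/2)/2)$, using $6(m/2)^2(2z)=3m^2z$, $(m/2)(2z)=mz$, and the fact that Dedekind sums depend on the first argument only modulo the second; dividing by the expansion of $(q;q)_\infty$ from part (2), the square-root factors cancel, the exponents combine as $-\tfrac{\pi^2}{3m^2z}+\tfrac{\pi^2}{6m^2z}=-\tfrac{\pi^2}{6m^2z}$, and the phase becomes $e\big((s(-r,m/2)-s(-r,m))/2\big)=Q(\zeta)$, as claimed, with the product of the two lower-order errors again of the same type. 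The substantive work is concentrated in part (2) --- keeping the branch of $\{-i(c\tau+d)\}^{1/2}$ and the $\eta$-multiplier bookkeeping straight so that the phase collapses to exactly $e(s(-r,m)/2)$; part (1) is trivial and part (3) is a short formal consequence, the one point to watch there being that ``$m$ even'' is precisely what forces $\zeta^2$ to have order $m/2$ (for $m$ odd, $\zeta^2$ still has order $m$ and $(-q;q)_\infty$ grows rather than decays).
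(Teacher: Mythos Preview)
Your proposal is correct and follows essentially the same approach as the paper: part (1) by direct expansion of $\sin$ into exponentials and isolation of the dominant term, part (2) via the modular transformation of $\eta(\tau)$ under a matrix with bottom row $(m,-r)$ combined with the $q$-expansion at the image cusp, and part (3) via the identity $(-q;q)_\infty=(q^2;q^2)_\infty/(q;q)_\infty$ together with part (2). Your write-up in fact spells out more of the phase and exponent bookkeeping in parts (2) and (3) than the paper does, but the underlying argument is identical.
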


\begin{proof}  We prove each part of the lemma separately as follows.
\begin{enumerate}
\item
	We have
	\bea
		\sin(\alpha (it-n_0))
		\= \frac {e^{-\alpha (t+in_0)}-e^{\alpha (t+ in_0)}} {2i}
		\= \begin{cases}
			\frac{\exp\left({\alpha(t+in_0)}\right)}{2i}\left(1-e^{-\alpha(2t+2in_0)}\right),
			&\text{if } \Re(t)>0,\\
			-\frac{\exp\left({-\alpha(t+in_0)}\right)}{2i}\left(1-e^{\alpha(2t+2in_0)}\right),
			&\text{if }\Re(t)<0. 
		\end{cases}
	\eea
	In each case the second exponential becomes exponentially small as $t\to\infty$. \medskip
\item 
	It is well known that the eta function
	\bea
		\eta(\tau) := q^{\frac{1}{24}} (q;q)_\infty
		%,\qquad \tau\in\Hh
	\eea
	where $q=e^{2 \pi i \tau}$ and $\tau\in\Hh$ satisfies the modular transformation formula
	\bea
		\eta\left(\frac{a\tau +b}{c\tau+d}\right)
		\= e\left( \frac{a+d}{24c}-\frac{s(d,c)}2-\frac 1 8\right)
		(c\tau+d)^{\frac 1 2}
		\eta(\tau)
	\eea
for all $\begin{psmallmatrix}a&b\\c&d\end{psmallmatrix}\in \operatorname{SL}_2(\Z)$ with $c>0$.  Hence, for $\tfrac rm\in\Q$
 with $(r,m)=1$ we choose $a,b\in\Z$ such that $\begin{psmallmatrix}-a&-b\\ m&-r\end{psmallmatrix}\in \operatorname{SL}_2(\Z)$. Then we obtain for all $\tau\in\Hh$ with $q=e(\tau)$
	\bea
		(q;q)_\infty
		\= q^{-\frac{1}{24}}\eta(\tau)
		\= &q^{-\frac{1}{24}}
		(m\tau-r)^{-\frac 1 2}
		e\left(\frac{a+r}{24m}+\frac{s(-r,m)}2+\frac 1 8\right)
		\eta\left(\frac{a\tau+b}{-m\tau +r}\right).\\
	\eea
	
Setting $\tau = \tfrac rm -\frac{z}{2\pi i}$ we have $m\tau-r = -\frac{mz}{2\pi i}$ and
\bea
\frac{a\tau+b}{-m\tau +r}
\=
2\pi i \frac{ar+bm}{m^2z}
- \frac{a}{m}
\=
\frac{2\pi i}{m^2z}
-\frac a m.
\eea
Because $z$ is in the right half-plane we have $\tau\in\Hh$ and we obtain
\bea[]
&\eta\left(\frac{a\tau+b}{-m\tau +r}\right)
\= {\eta}
\left(
\frac{2\pi i}{m^2z}
-\frac a m
\right)\\[5pt]
\=
&e\left(
\frac 1 {24}
\left(
\frac{2\pi i}{m^2z}
-\frac a m
\right)
\right)
\prod_{n\geq 1}
\left(1-
e\left(
n\left(
\frac{2\pi i}{m^2z}
-\frac a m
\right)
\right)\right)
\\[5pt]
\=&\exp\left(-\frac {\pi^2} {6m^2 z}\right)
e\left(\frac{-a}{24m}\right)
\prod_{n\geq 1}
\left(1-
\exp
\left(\frac{-4\pi^2n}{m^2z}
-\frac {2\pi ian} m
\right)\right).
\eea
As $z\to 0$ in the right half-plane we have
\bea
\prod_{n\geq 1}
\left(1-
\exp
\left(\frac{-4\pi^2n}{m^2z}
-\frac {2\pi ian} m
\right)\right)
\=  (1+ o(|z|^L))
\eea
for all $L\in\N$.
Hence, we obtain with $q=e^{2\pi i \tau} = e(\frac r m) e^{-z}$
\bea
(q;q)_\infty
\=
\exp\left({-\frac{\pi^2}{6m^2z}}+\frac z {24}\right) \sqrt{\frac{2\pi}{mz}} e\left(\frac{s(-r,m)}{2}\right) (1+ o(|z|^L))
\eea as claimed. \medskip

\item The claim follows from the previous statement and the identity $(-q;q)_\infty = \frac{(q^2;q^2)_\infty}{(q;q)_\infty}.$ \qedhere
\end{enumerate}
\end{proof}

Moreover, we will use a refinement of Lemma 2.1 in \cite{GZ} following Lemma 4 in  \cite{CWthesis}.

\begin{lemma}\label{lma:asymp_lemmaqpoch}
Let $w = e^{iv}\in\C$ such that if $\re(v)>0$ then $\re((v+2\pi n)/\varphi) \neq 0$ for all $n\in\Z$. Moreover, let $\zeta\in\C$ be a root of unity of order  $m\in\N$. Then we have { as $z\to 0$ in the right half-plane, i.e.} $q= \zeta e^{-z/m}\to \zeta$
		\bea
			(wq;q)_\infty \=
			\exp\left(-\frac{\Li_2^{\varphi}(w^m)}{mz}
			-\frac 1 2 \Li_1^\varphi(w^m)
			+\sum_{t=1}^m \frac t m \Li_1^{\varphi}(\zeta^{t}w)
			+\psi_{\zeta}(z;w)\right)
		\eea
	where $\psi_{\zeta}(z;w) \in \C[[z]]$ has an asymptotic expansion as $z\to0$
	\bea
		\psi_{\zeta}(z;w) =
		-\sum_{s=2}^{N}\sum_{t=1}^{m}  B_s\left(1-\frac t m\right)
		\Li_{2-s}(\zeta^{t} w) \frac{z^{s-1}}{s!}
		 +O(|z|^N)
	\eea
	for all $N\in\N$.
\end{lemma}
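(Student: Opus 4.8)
The plan is to establish the asymptotic expansion of $(wq;q)_\infty$ with $q = \zeta e^{-z/m}$ by taking logarithms and applying the Euler--Maclaurin summation formula to the resulting series, following the strategy of Lemma 2.1 in \cite{GZ} and Lemma 4 in \cite{CWthesis}. First I would write
\[
\log (wq;q)_\infty = \sum_{k\geq 0} \log\!\left(1 - wq^{k+1}\right) = -\sum_{k\geq 1}\frac{1}{k}\sum_{j\geq 1} w^j q^{jk} = -\sum_{k\geq 1}\frac{(wq^k)^?}{\cdots},
\]
more precisely expand $\log(1-wq^{k+1}) = -\sum_{r\geq 1} \frac{(wq^{k+1})^r}{r}$ and reorganize, but the cleaner route is to write $\log(wq;q)_\infty = \sum_{k\geq 1}\log(1 - wq^k)$ and split the index $k$ according to its residue $t \bmod m$, writing $k = m\ell + t$ with $1 \le t \le m$ and $\ell \ge 0$. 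Since $q^m = \zeta^m e^{-z} = e^{-z}$, each residue class becomes a sum over $\ell$ of $\log(1 - \zeta^t w\, e^{-z(\ell + t/m)})$, i.e.\ a sum of the shape $\sum_{\ell \ge 0} g(z(\ell + t/m))$ with $g(x) = \log(1 - \zeta^t w e^{-x})= -\Li_1^\varphi(\zeta^t w e^{-x})$ once the correct branch (rotated by $\varphi$) is fixed so that the branch cuts do not obstruct the contour, as set up before the lemma statement.

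Next I would apply Euler--Maclaurin to each of the $m$ sums $\sum_{\ell\ge 0} g(z(\ell+t/m))$ as $z\to 0$. The standard Euler--Maclaurin expansion gives a leading term $\frac{1}{z}\int_0^\infty g(x)\,dx$, a boundary term $-g(0)$ weighted by the Bernoulli polynomial values $B_s(1 - t/m)$, and a full asymptotic series $\sum_{s\ge 1} B_s(1-t/m)\, g^{(s-1)}(0)\,\frac{z^{s-1}}{s!}$ with controlled error $O(|z|^N)$. Here the key identifications are: $\int_0^\infty (-\Li_1^\varphi(\zeta^t w e^{-x}))\,dx$ summed over $t$ telescopes — because $\zeta^{tm} = 1$ for $t = m$ so $(\zeta^m w)^? $ — wait, more carefully, $\sum_{t=1}^m \Li_1^\varphi(\zeta^t w e^{-x})$ relates to $\Li_1^\varphi(w^m e^{-mx})$ via the distribution relation $\sum_{t=1}^m \Li_1^\varphi(\zeta^t u) = \Li_1^\varphi(u^m)$ for a primitive $m$-th root of unity $\zeta$, so substituting $u = w e^{-x}$ and integrating gives the $-\Li_2^\varphi(w^m)/z$ term after rescaling (using the integral definition $\Li_2^\varphi(e^{-iv}) = \int_0^\infty \Li_1^\varphi(e^{-\varphi x - iv})\,dx$ adapted appropriately with the factor $m$ from $q = \zeta e^{-z/m}$). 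The $-g(0)$ boundary contributions sum to $-\frac12\sum_{t=1}^m \Li_1^\varphi(\zeta^t w)$ via $B_1(1-t/m)$; but the stated formula has $-\frac12\Li_1^\varphi(w^m) + \sum_t \frac{t}{m}\Li_1^\varphi(\zeta^t w)$, so I would reconcile this by noting $B_1(1-t/m) = \frac12 - \frac{t}{m}$ wait that gives $+\frac12 - \frac tm$; combining the $\frac12$ pieces over $t$ via the distribution relation produces $\frac12\Li_1^\varphi(w^m)$ with the correct sign, and the $-\frac tm$ pieces give $+\sum_t \frac tm \Li_1^\varphi(\zeta^t w)$ — I would pin down the signs carefully here. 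The higher-order terms, with $g^{(s-1)}(0) = -\Li_{2-s}(\zeta^t w)$ (using the inductive definition \eqref{eq:defpolylog} of negative-index polylogs, which are $\varphi$-independent), assemble into exactly $\psi_{w,\zeta}(z) = -\sum_{s=2}^N \sum_{t=1}^m B_s(1-t/m)\,\Li_{2-s}(\zeta^t w)\,\frac{z^{s-1}}{s!} + O(|z|^N)$.

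The main obstacle will be the careful bookkeeping of branch cuts: the hypothesis that $\re((v+2\pi n)/\varphi) \ne 0$ for all $n \in \Z$ when $\re(v) > 0$ is precisely what guarantees that, along the ray of integration $x \mapsto -i\varphi x + v$ depicted in Figure \ref{fig:branchingLi2tilde}, the path $x \geq 0$ avoids all the rotated branch cuts of $\Li_1^\varphi$ and $\Li_2^\varphi$, so that the integral $\int_0^\infty$ defining $\Li_2^\varphi$ is well-defined and the Euler--Maclaurin manipulations (differentiation under the integral, term-by-term expansion at $x = 0$) are legitimate. I would need to verify that as $z \to 0$ in the right half-plane, the finitely many arguments $\zeta^t w e^{-x}$ stay away from $[1,\infty)$ (in the rotated sense) uniformly on the integration contour, invoking the setup around Figure \ref{fig:branchingLi2tilde}. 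A secondary technical point is justifying that the Euler--Maclaurin error term is genuinely $O(|z|^N)$ uniformly as $z \to 0$ along the ray (not merely for real $z$), which follows from decay of $g$ and its derivatives at $+\infty$ — $g(x) \to 0$ exponentially since $\re(x) \to +\infty$ — combined with analyticity; this is the part where one must be slightly more careful than in the real-variable case but is essentially the content already handled in \cite{CWthesis}. Finally I would collect the four contributions ($-\Li_2^\varphi(w^m)/(nz)$ — here $n$ should be $m$ or the notation reconciled — the two $\Li_1^\varphi$ terms, and $\psi_{w,\zeta}(z)$), exponentiate, and conclude.
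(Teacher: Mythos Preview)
Your proposal is correct and follows essentially the same route as the paper: split $\sum_{k\ge 1}\log(1-wq^k)$ into residue classes modulo $m$, apply Euler--Maclaurin to each class, use the distribution relation for polylogarithms to assemble the leading $-\Li_2^\varphi(w^m)/(mz)$ term, and bound the remainder. The only organisational difference is that you invoke the shifted Euler--Maclaurin formula (with Bernoulli polynomials $B_s(1-t/m)$ appearing directly) whereas the paper applies the unshifted version, Taylor-expands the resulting $\Li_{2-l}^\varphi(e^{zt/m}\zeta^{-t}w)$ and derivative terms in $z$, and then recombines via the substitution $t\mapsto m-t$; your bookkeeping is slightly cleaner but the content is identical, and you are right that the denominator in the statement should be $m$, not $n$.
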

\begin{proof}
{ Throughout the proof we write $z=\varphi h$ where $h\in\R_{>0}$ and $\varphi\in\C$ with $|\varphi|=1$, $|\arg(\varphi)|<\frac \pi 2$.}
We have
\bea
	\widetilde \log(wq;q)_\infty
	\= \sum_{n\geq 1} \widetilde\log(1-wq^{n})
	\= \sum_{t=0}^{m-1} \sum_{k\geq 1}
		\widetilde\log\left(1-\zeta^{-t}we^{-\varphi h(km-t)/m}\right)
\eea
and apply the Euler-Maclaurin summation formula \cite[p.13]{ZMellin} to obtain for every $N\in\N$
\beal\label{eq:EMpf}
	\widetilde\log(wq;q)_\infty
	\= \sum_{t=0}^{m-1} &\frac 1 h \int_{0}^\infty
	\widetilde \log\left(1-\zeta^{-t}we^{-\varphi (x-th/m)}\right) dx\\
	&+\sum_{n=0}^N \frac{(-1)^n B_{n+1}}{(n+1)!}
	\frac {d^n}{dx^n}
	\widetilde \log\left(1-\zeta^{-t}we^{-\varphi (x-th/m)}\right) dx|_{x=0}
	\;h^n
	+\mathcal E_{t,N}
\eeal
with
\bea
	\mathcal E_{t,N}
		\coloneqq	h^N\int_0^\infty
		\Li_{1-N}\left(\zeta^{-t}we^{-\varphi (x-th/m)}\right)
			\frac{\overline B_N(x)}{N!}dx.
\eea
The Euler-Maclaurin summation formula applies in this case, as the function defined by $x\mapsto 	\widetilde \log\left(1-\zeta^{-t}we^{-\varphi (x-th/m)}\right)$ and all of its derivatives
\beal\label{eq:EMderivs}
	\frac {d^n} {dx^n}
	\widetilde\log\left(1-\zeta^{-t}we^{-\varphi (x -th/m)}\right)
	\=
	-(-1)^n \varphi^n \Li_{1-n}^\varphi(\zeta^{-t}we^{-\varphi x+ zt/m})
\eeal
are of rapid decay as $x\to \infty$.

We have
\bea
	\frac 1 h
	\int_{0}^\infty
	\widetilde\log\left(1-\zeta^{-t}we^{-\varphi (x-th/m)}\right)
	dx
	\= &-\frac 1 z
	\Li_2^\varphi({e^{zt/m}} \zeta^{-t}w)\\
	\= &-\frac 1 z 
	\sum_{l\geq 0}
	\frac {\Li_{2-l}^\varphi(\zeta^{-t}w)}{l!}
	\left(\frac {zt}m\right)^l
\eea
where $\widetilde \Li_s^\varphi$ are the polylogarithms defined in (\ref{eq:defpolylog}).
Evaluating the derivatives (\ref{eq:EMderivs}) at $x=0$ gives
\bea
		\frac {d^n} {dx^n}
	\widetilde\log\left(1-\zeta^{-t}we^{-\varphi (x-th/m)}\right)|_{x=0}
	\=&-(-1)^n \varphi^n \Li^{\varphi}_{1-n}(\zeta^{-t}we^{zt/m})\\
	\=&-(-1)^n \varphi^n \sum_{k\geq 0}\frac{\Li_{1-n-k}^{\varphi}(\zeta^{-t}w)}{k!} \left(\frac {zt}m\right)^k.
\eea Hence, (\ref{eq:EMpf}) can be written as
\bea
	\log(wq;q)_\infty
	\= & - \sum_{t=0}^{m-1}\frac 1 z 
	\sum_{l\geq 0}
	\frac {\Li_{2-l}^\varphi(\zeta^{-t}w)}{l!}
	\left(\frac {zt}m\right)^m\\
	&-\sum_{t=0}^{m-1}\sum_{n=0}^N \frac{B_{n+1}}{(n+1)!}
	\sum_{k\geq 0}\frac{\Li_{1-n-k}^\varphi(\zeta^{-t}w)}{k!} \left(\frac {t}m\right)^k z^{k+n} +\mathcal E_{t,N}
\eea
	by using the distribution property of the dilogarithm (\cite[p.9]{Zdilog}). Moreover, shifting $n\mapsto n-1$ and summing over $s=n+k=1\ldots N+1$ shows that $\log(wq;q)_\infty$ is equal to
\bea[]
	& -\frac{\Li_2^\varphi(w^m)}{mz}
	-\sum_{t=0}^{m-1}\sum_{s=1}^{N+1}
	\sum_{n=0}^s \binom s n B_n
	\left(\frac tm\right)^{s-n}
	\Li_{2-s}^\varphi(\zeta^{-t}w)\frac{z^{s-1}}{s!}
	+\mathcal E_{t,N}+O(|z|^N)\\
	\=&
	-\frac{\Li_2^\varphi(w^m)}{mz}
	-\sum_{t=0}^{m-1} \Li_1^\varphi(\zeta^{-t}w)\left(\frac t m - \frac 1 2\right)
	-\sum_{t=0}^{m-1}\sum_{s\geq2} B_s\left(\frac t m\right)
	\Li_{2-s}(\zeta^{-t}w) \frac{z^{s-1}}{s!}
	+\mathcal E_{t,N}+O(|z|^N)
\eea
 where we collect all terms with $s\geq N$ in $O(|z|^N)$.
Replacing $t$ by $m-t\in\{1,\ldots,m\}$ yields
\bea[]
	&-\frac{\Li_2^\varphi(w^m)}{mz}
	+\sum_{t=1}^{m} \Li_1^\varphi(\zeta^{t}w)\left(\frac t m - \frac 1 2\right)
	-\sum_{t=0}^{m-1}\sum_{s\geq2} B_s\left(1-\frac t m\right)
	\Li_{2-s}(\zeta^{t}w) \frac{z^{s-1}}{s!}
	 + \mathcal E_{t,N}+O(|z|^N)\\	
	\=&-\frac{\Li_2^\varphi(w^m)}{mz}
	-\frac 1 2 \Li_1^\varphi(w^m)
	+\sum_{t=0}^{m-1} \frac t m \Li_1^\varphi(\zeta^{	t}w)
	+\psi_{w,\zeta}(z),
\eea
where we used
\bea
	\sum_{t=1}^m \Li_1^\varphi(\zeta^tw)
	\=\sum_{t=1}^m \widetilde\log(1-\zeta^tw)
	\= \widetilde\log\left(\prod_{t=1}^m 1-\zeta^tw\right)
	\= \widetilde \log( 1-w^m)
	\= \Li_1^\varphi(w^m).
\eea
Note that $\Li_{1-N}(z) \in (1-z)^{N} \C[z]$ for $N>0$. Our assumption implies that $we^{-\varphi xh}\neq1$, hence there exists $C>0$ such that $|\Li_{1-N}(we^{-\varphi (hx+th/m)})| < C |we^{-\varphi (hx+th/m)}| < C |we^{-\varphi xh)}|$. Hence, we obtain for some $D>0$
\bea
	|\mathcal E_{N,t}| \;\leq\;
	&\frac{h^N}{N!}C \int_0^\infty |we^{-\varphi  xh}| \overline B_N(x)dx\\
	\=&\frac{h^N}{N!} C \sum_{x_0=0}^\infty
	\int_0^\infty |we^{-\varphi h (x_0+x)}| B_N(x)dx\\
	\=&\frac{h^N}{N!} C|w| \sum_{x_0=0}^\infty
	|e^{-\varphi h x_0}|
	\int_0^1 |e^{-\varphi h x}| B_N(x)dx\\
	\=&\frac{h^N}{N!} \frac{C|w| }{1-|e^{-\varphi h}|} D 
	\= O (h^N),
\eea
since $|e^{-\varphi h}|<1$.
In particular,  $\psi_{\zeta}(z;w)$ has the claimed asymptotic expansion. This completes the proof.
\end{proof}

\section{Proof of Theorem \ref{Thm: radial asymp}}\label{Sec: proof of conj 1}

\subsection{Proof of Theorem \ref{Thm: radial asymp} (1)}
We begin by showing that at any root of unity with order not divisible by $4$, $v_1(q)$ converges.
\begin{lemma}\label{Lem: bounds at roots of unity not divis by 4}
	 Let $\zeta_N := e^{\frac{2 \pi i}{N}}$.  For any root of unity $\zeta_m^\ell$ with  $\gcd(\ell,m)=1$ and $4\nmid m$, we have that 
	$$
	\lim_{z\to 0}
	v_1(\zeta_m^\ell e^{-z}) \= 
	v_1(\zeta_m^\ell) \= 
 \sum_{s=0}^{m-1}  \frac{\zeta_{2 m}^{\ell s(s+ 1)}}{(-\zeta_m^{2\ell};\zeta_m^{2\ell})_{s}}
		\times 
		\begin{cases}
		2 &\text{if $m$ is odd,}\\ 
		\frac 4 5 &\text{if $m\equiv 2\bmod 4$}\\ 
		\end{cases}$$
	as $z\to 0$ on a ray in the right half-plane.
\end{lemma}
\begin{proof}
Let $z =he^{i\vartheta}$ for $\vartheta\in (-\frac \pi 2,\frac \pi 2)$.
With $q = \zeta_m^\ell e^{-z}$, we have
\bea
\frac{q^{n(n+1)/2}}{(-q^2;q^2)_n}
\= \prod_{j=1}^n \frac{1}{q^j+q^{-j}}
\= \frac 1{2^n}\prod_{j=1}^n \frac{1}{\cosh{(2\pi i j\ell/m+jz)}}.
\eea
For fixed $\vartheta\in (-\frac \pi 2,\frac \pi 2)$,
 $\cosh{(2\pi i j\ell/m+xe^{i\theta})}$ only depends on $\ell j \bmod m$ and never vanishes for $x\geq 0$ because $4\nmid m$. Moreover, for each $j=0,\ldots,m-1$, we have 
$\log|\cosh(2\pi i j\ell/m+xe^{i\vartheta})|\to \log|\cos(2\pi j\ell/m)|$ as $x\to 0$ and 
$\log|\cosh(2\pi i j\ell/m+xe^{i\vartheta})|\sim A_{j}x$ as $x\to \infty$ for some $A_j>0$.  Using this, as well the fact that 
 there are only finitely many congruence classes modulo $m$,  a calculus argument (e.g., using the Taylor expansion around $x=0$) reveals that there exist $a,b >0$ that only depend on $\vartheta$ and $m$ such that
\bea
\log|\cosh(2\pi i j\ell/m+xe^{i\vartheta})|
>ax-b\sqrt{x} +\log|\cos(2\pi j\ell/m)|
\eea  for all $x\geq 0$,   $j\in \mathbb Z_{\geq 0}$.
Thus, we have that
\bea
\left|\frac{q^{n(n+1)/2}}{(-q^2;q^2)_n}\right|
<
&\frac 1 {2^n}
\prod_{j=1}^n
\frac{\exp(-ajh+b\sqrt{jh})}
{|\cos(2\pi j\ell/m)|}\\
\leq
&\frac 1 {2^n}
{\exp(-an(n+1)h/2+{{bn^{3/2}\sqrt{h}}})}
\prod_{j=1}^n
\frac 1 
{|\cos(2\pi j\ell/m)|},
\eea
where we have used the inequality $\sum_{j=1}^n \sqrt{j} \leq {{ n^{3/2}}}$.
There exists a constant $C>0$ such that
\bea
{\exp(-an(n+1)h/2+{{bn^{3/2}\sqrt{h}}})} < C
\eea
for all $h\geq 0$ and $n\in\Z_{\geq0}$.
Because $4\nmid m$, we have
$
\prod_{j=1}^m {|\cos(2\pi j\ell/m)|} =  
2^{\rho_m-m}$,  where $\rho_m = 1$ if $m$ is odd, and $\rho_m=2$ if $m\equiv 2 \pmod{4}$. 
Thus, we see that
\bea
|v_1(\zeta_m^\ell e^{-z})|
\leq
&\sum_{n=0}^\infty
\left|\frac{q^{n(n+1)/2}}{(-q^2;q^2)_n}\right|
\\
\leq &\;C
\sum_{n=0}^\infty \frac 1 {2^n}
\prod_{j=1}^n
\frac 1 
{|\cos(2\pi j\ell/m)|}\\
\=
&C\sum_{s=0}^{m-1}
\sum_{n=0}^\infty \frac 1 {2^{s+mn}}
\prod_{j=1}^{s+mn}
\frac 1 
{|\cos(2\pi j\ell/m)|}
&\qquad{(n \mapsto s+mn)}
\\
\=
&C\sum_{s=0}^{m-1}
\sum_{n=0}^\infty \frac 1 {2^{s+mn}}
\prod_{l=0}^{n}
\prod_{k=1}^{m}
\frac 1 {|\cos(2\pi k\ell/m)|}
\prod_{j=1}^s
\frac 1 {|\cos(2\pi j\ell/m)|}
&\qquad{(j \mapsto l m+k)}
\\
\=
&C\sum_{s=0}^{m-1}
\prod_{j=1}^s
\frac 1 {2^s|\cos(2\pi j\ell/m)|}
\sum_{n=0}^\infty {2^{(m- \rho_m )(n+1)-mn}}\\
\=
& \frac{C}{2^{\rho_m}} \sum_{s=0}^{m-1}
\prod_{j=1}^s
\frac {2^m} {2^s|\cos(2\pi j\ell/m)|}
\sum_{n=0}^\infty \frac 1{2^{ \rho_m n}}\\
\=
&  \frac{C}{2^{\rho_m}(1-2^{-\rho_m})}\sum_{s=0}^{m-1}
\prod_{j=1}^s
\frac {2^m} {2^s|\cos(2\pi j\ell/m)|}
\eea is finite.
Hence, the theorem of dominated convergence implies that we can interchange the summation over $n$ and the limit $q\to \zeta_{m}^\ell$. In other words, we have
\bea
\lim_{h\to 0}v_1(\zeta_m^\ell e^{-h}) = v_1(\zeta_m^\ell).
\eea
More precisely, with the notation from above we have
	\begin{align*}
		v_1(\zeta_m^\ell) &\= \sum_{n=0}^\infty \frac{\zeta_{2 m}^{\ell n (n+1)}}{(-\zeta_m^{2\ell};\zeta_m^{2\ell})_n}
		\= \sum_{s=0}^{m-1} \sum_{n=0}^\infty \frac{\zeta_{2 m}^{\ell (s+mn) (s+mn+1)}}{(-\zeta_m^{2\ell};\zeta_m^{2\ell})_{s+mn}}  \\
		&\= \sum_{s=0}^{m-1} \sum_{n=0}^\infty
		\frac{(-1)^{(m+1)n}}{2^{\rho_m n}}
		\frac{ \zeta_{2 m}^{\ell s(s+ 1)}}{(-\zeta_m^{2\ell};\zeta_m^{2\ell})_{s}} 
		\= \left( \sum_{n=0}^\infty \frac{(-1)^{(m+1)n}}{2^{\rho_m n}}\right) \left(  \sum_{s=0}^{m-1}  \frac{\zeta_{2 m}^{\ell s(s+ 1)}}{(-\zeta_m^{2\ell};\zeta_m^{2\ell})_{s}} \right) \\
		&\=  \sum_{s=0}^{m-1}  \frac{\zeta_{2 m}^{\ell s(s+ 1)}}{(-\zeta_m^{2\ell};\zeta_m^{2\ell})_{s}}
		\times 
		\begin{cases}
		2 &\text{if $m$ is odd,}\\ 
		\frac 4 5 &\text{if $m\equiv 2\bmod 4$.}\\ 
		\end{cases}
	\end{align*} 
\end{proof}

\subsection{Proof of Theorem \ref{Thm: radial asymp}, (3)}\ Throughout we assume that $\varphi \in \C$ with $|\varphi|=1$ and $0\neq|\arg(\varphi)| < \frac \pi 2$ and write $z=\varphi h$.
We will present the case $q = ie^{-z}\to i$ for $z=\varphi h\to0$ in a fixed ray in the right half-plane in detail. The case  $q\to -i$ is analogous, and so we omit the proof for brevity.
\\

We split up the sum {defining $v_1(q)$} depending on $n\pmod 2$, i.e., consider separately
\beal\label{eq:defv10v11}
	v_1^{[0]}(q) \; = \sum_{n\geq 0 \text{ even}} \frac {q^{n(n+1)/2}}{(-q^2;q^2)_n} \=
	\frac 1 {(-q^2;q^2)_\infty} \sum_{n\geq 0 \text{ even}} (-i)^{n/2} e^{-zn(n+1)/2}\ (-e^{-2nz}q^2;q^2)_\infty,\\
	v_1^{[1]}(q) \;= \sum_{n\geq 0 \text{ odd}} \frac {q^{n(n+1)/2}}{(-q^2;q^2)_n} \= 
	\frac 1 {(-q^2;q^2)_\infty} \sum_{n\geq 0 \text{ odd}} i^{(n+1)/2} e^{-zn(n+1)/2}\ (e^{-2nz}q^2;q^2)_\infty,\\
\eeal

as we have 
\begin{equation*}
	i^{n(n+1)/2} \= \begin{cases}
		(-i)^{n/2},	&\text{if $n$ is even,}\\
		i^{(n+1)/2}, &\text{if $n$ is odd.}
	\end{cases}
\end{equation*}

To state our next result, recall that for $n \geq 0$ the $q$-Pochhammer symbol satisfies the classical formula
\begin{align*}
	(a;q)_n = \frac{(a;q)_\infty}{(aq^n;q)_\infty},
\end{align*}
meaning that we may extend the definition of the $q$-Pochhammer symbol  to $-n$ by defining
\begin{align*}
	(a;q)_{-n} \coloneqq
\frac{(a;q)_\infty}{(aq^{-n};q)_\infty}
=
\frac{1}{(aq^{-n} ; q)_n}.
\end{align*}
Then the $q$-Pochhammer symbol satisfies  
\bea
(a;q)_{-n}
\= q^{n(n-1)/2}
\frac{(-q/a)^n}{(q/a;q)_n}.
\eea
We will prove the following proposition which implies (3) in Theorem \ref{Thm: radial asymp}.
\begin{prp} As  $z \to0$ in the right half-plane on a ray with $\arg z \neq 0$, we have with $q=ie^{-z}$
	\begin{align}\label{final_asymp_v10}
	v_1^{[0]}(q)
	\= 
	&e^{-\frac{V}{z} }
\sqrt{\frac{2\pi i}{-z}}\; \gamma^-_{(1/4)}{(z)}(1+ O(|z|^{L}))
	+\phi^{[0]}_{(1/4)}(z){(1+O(|z|^L))},
\\\label{final_asymp_v11}
	v_1^{[1]}(q)
	\=
	 &e^{ \frac{V}{z} }  \sqrt{\frac{2\pi i}{z}}\; \gamma^+_{(1/4)}{(z)}(1+ O(|z|^{L}))
	+\phi^{[1]}_{(1/4)}(z)
{(1+O(|z|^L))}
\end{align}
{for all $L>0$,}
where $\gamma^{\pm}_{(1/4)}(z)\in\C[[z]]$ is defined in 
 
{\eqref{eq:defgammam}, resp. \eqref{eq:defgammam2}}
and  
{$
\phi^{[0]}_{(1/4)}(z), 
\phi^{[1]}_{(1/4)}(z)
\in\C[[z]]$
}
with
\bea
	\phi^{[0]}_{(1/4)}(z)
		&\;= \sum_{\substack{n<0:\\ n\equiv 0\Mod{2}}} \frac{q^{n(n+1)/2}}{(-q^2;q^2)_n}
		\=	- 4 i \, z
			- 48 \, z^{2}
			+ \frac{2878}{3} i \, z^{3}
			+ 26704 \, z^{4}
			+O(|z|^5),\\
	\phi^{[1]}_{(1/4)}(z)
		&\;= \sum_{\substack{n<0:\\ n\equiv 1\Mod{2}}} \frac{q^{n(n+1)/2}}{(-q^2;q^2)_n}
		\= 2 + 8 i\, z - 96 \, z^{2}
		 - \frac{5708}{3} i \, z^{3}
		 + 52640 \, z^{4} 
		+O(|z|^5).
\eea
\end{prp}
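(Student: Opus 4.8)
The plan is to treat $v_1^{[0]}(q)$ and $v_1^{[1]}(q)$ in parallel through a Watson-type contour integral followed by the saddle-point method; the two cases will differ only in the sign of one linear term and in which of two saddles the steepest-descent contour passes through. The essential preliminary observation is that the summands appearing on the right of \eqref{eq:defv10v11} — as opposed to $q^{n(n+1)/2}/(-q^2;q^2)_n$, whose phase $i^{n(n+1)/2}$ is quadratic in $n$ — admit well-behaved analytic continuations in $n$: the phase $(-i)^{n/2}$ (resp.\ $i^{(n+1)/2}$) is a linear exponential, $e^{-zn(n+1)/2}$ is entire, and $(-e^{-2nz}q^2;q^2)_\infty$ is an infinite product converging for every complex $n$. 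Reparametrizing $n=2\nu$ (resp.\ $n=2\nu+1$) thus yields entire functions $F_0(w),F_1(w)$ with $v_1^{[j]}(q)=\sum_{\nu\ge0}F_j(\nu)$. For $\nu<0$, a short computation with $q=ie^{-z}$ shows that $F_j(\nu)$ is divisible by a power of $z$ growing linearly in $|\nu|$ (the factors of $(-q^2;q^2)_n^{-1}$ that would be large when $|q|<1$ in fact vanish at $q=i$, hence are $O(z)$), so $\phi^{[j]}_{(1/4)}(z):=\sum_{\nu<0}F_j(\nu)$ is a well-defined formal power series in which the coefficient of each $z^L$ is a finite sum; the claimed expansions of $\phi^{[0]}_{(1/4)}$ and $\phi^{[1]}_{(1/4)}$ then follow by evaluating those finitely many terms.

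I would next write $v_1^{[j]}(q)=\int_{\mathcal C_j}\frac{F_j(w)}{e^{2\pi i w}-1}\,dw$ over a Hankel-type contour $\mathcal C_j$ (running from $+\infty$ below the positive real axis, around the origin, and back to $+\infty$ above it) encircling $\{0,1,2,\dots\}$; this is the residue theorem, the kernel having residue $1$ at each integer, and it is legitimate because $F_j$ decays rapidly in a wedge about the positive real axis, the Gaussian factor $e^{-2zw^2}$ dominating since $\re(z)>0$. The crux is to deform $\mathcal C_j$ onto a steepest-descent contour. Substituting the $q$-Pochhammer asymptotics of Lemma~\ref{lma:asymp_lemmaqpoch} (its base $q^2=-e^{-2z}$ tending to $-1$, a root of unity of order $2$) and the asymptotic of $(-q^2;q^2)_\infty$ from Lemma~\ref{asymp_lemma} into $\log F_j(w)$, one finds $\log F_j(w)=\tfrac1z\Phi_j(zw)+O(1)$ with $\Phi_j$ an explicit function built from $\Li_2^\varphi$; its critical points $c$ satisfy a Nahm-type equation which, in the variable $y=e^{-4c}$, takes the form $1-y^2=\pm i\,y$, so that $e^{-8c}$ is a primitive sixth root of unity and the critical values are $\pm V$ with $V=\operatorname{D}(e(1/6))\tfrac{i}{8}$ — the real part of the critical value cancelling by the special value $\re\Li_2(e(1/6))=\tfrac{\pi^2}{36}$. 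In this deformation the contour sweeps across the poles of the kernel at the negative integers, and summing those residues reproduces $\sum_{\nu<0}F_j(\nu)=\phi^{[j]}_{(1/4)}(z)$ (in the asymptotic sense: for each power of $z$ only finitely many poles are involved). What is left is the integral along the steepest-descent contour, which I would evaluate by expanding $\Phi_j$ to second order at the saddle, performing the Gaussian integral to produce the factor $\sqrt{2\pi i/(\pm z)}$, and combining the subleading terms of Lemma~\ref{lma:asymp_lemmaqpoch} with the values at the saddle of the kernel and of $i^{w}$ to obtain $\gamma^{\pm}_{(1/4)}=\tfrac1{2\sqrt[4]{3(2\mp\sqrt3)}}$; the tails of the contour are bounded crudely and contribute only to the $O(|z|)$ relative error. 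Assembling the pieces gives \eqref{final_asymp_v10}--\eqref{final_asymp_v11}.

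The main obstacle is this middle step. Because $\Phi_j$ is built from the dilogarithm $\Li_2^\varphi$ with branch cuts rotated by $\varphi$, one must keep careful track — along the whole deformed contour, and as the contour is dragged past the saddle — of which branch is in force; this is precisely the ``slightly different branching of $\Li_2$'' flagged before Section~\ref{Sec: proof of conj 1}, and it is what selects, for $v_1^{[0]}$, the saddle contributing the $e^{-V/z}$ term (with coefficient $\gamma^-_{(1/4)}$) and, for $v_1^{[1]}$, the saddle contributing the $e^{+V/z}$ term — rather than the other sixth-root-of-unity saddle, and with no residual integral left along a branch cut. One must also confirm that exactly the negative-integer poles are crossed, with the orientation that produces $+\phi^{[j]}_{(1/4)}(z)$ rather than $-\phi^{[j]}_{(1/4)}(z)$ and with no obstruction at infinity, and push the expansions of Lemma~\ref{lma:asymp_lemmaqpoch} to the order needed, with all phase factors correct, in order to pin down $\gamma^{\pm}_{(1/4)}$.
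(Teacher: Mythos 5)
Your proposal follows essentially the same route as the paper: both establish a Watson-type contour-integral representation of the split sums (yours with kernel $1/(e^{2\pi i w}-1)$ on a Hankel contour, the paper's with $1/\sin(\pi s/2)$ and $1/\cos(\pi s/2)$ on the tilted wedge $L_\infty$ of Figure~\ref{figure_contour}), deform onto a steepest-descent path collecting the negative-integer residues into $\phi^{[j]}_{(1/4)}$, and evaluate the remaining integral by the saddle-point method via Lemmas~\ref{asymp_lemma} and~\ref{lma:asymp_lemmaqpoch}, arriving at the critical value $\pm V$ through the Nahm equation $1-y^2=\pm iy$. The technical subtleties you correctly single out as the crux — tracking which branch of $\Li_2^\varphi$ is in force along the deformed contour and which sixth-root-of-unity saddle is selected for each parity — are exactly what the paper settles through the domain of holomorphy \eqref{eq:domain} and the explicit functions $f,g$ in \eqref{f_pm_def}.
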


\subsubsection{Watson's contour integral} Using Watson's contour integral (\cite[4.2]{GR}, \cite{Watson1910}), we establish the following integral representation for $v_1^{[0]}(q)$ and $v_1^{[1]}(q)$.
\begin{lemma}\label{int_repv1}
	For $q=ie^{-z}$ with $\re(z)>0$, we have
	\begin{align}\label{eq:int_repv10}
		v_1^{[0]}(q)
		&\= \frac {-1}{2 i} \frac 1 {(-q^2;q^2)_\infty}
		\int_{L_\infty}
			e^{\pi i s/4}
			e^{-zs(s+1)/2}
			(-e^{-2sz}q^2;q^2)_\infty
			\frac {1} {2\sin\left({\pi s}/{2}\right)}
		ds,\\[10pt]
	\label{eq:int_repv11}
	v_1^{[1]}(q)
	&\= \frac {e^{\pi i 3/4}}{2} \frac 1 {(-q^2;q^2)_\infty}
		\int_{L_\infty}
			e^{-\pi i s/4}
			e^{-zs(s+1)/2}
			(e^{-2sz}q^2;q^2)_\infty
			\frac {1} {2\cos\left({\pi s}/{2}\right)}
		ds
	\end{align}
	where $L_\infty$ is the contour depicted
	in Figure \ref{figure_contour} as $R \to \infty$ for some small $\varepsilon>0$.
\end{lemma}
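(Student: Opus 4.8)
The plan is to apply the standard residue-summation technique underlying Watson's contour integral to convert each of the $q$-series $v_1^{[0]}(q)$ and $v_1^{[1]}(q)$ into a contour integral over $L_\infty$. First I would recall that $v_1^{[0]}(q) = \frac{1}{(-q^2;q^2)_\infty}\sum_{n\ge 0\text{ even}} (-i)^{n/2} e^{-zn(n+1)/2}(-e^{-2nz}q^2;q^2)_\infty$ from \eqref{eq:defv10v11}, with $q = ie^{-z}$, so that $e^{\pi i s/4}$ specializes to $(-i)^{s/2}$ (up to the chosen branch) when $s = n$ is a nonnegative even integer. Define the integrand
\[
F(s) \;:=\; e^{\pi i s/4}\, e^{-zs(s+1)/2}\,(-e^{-2sz}q^2;q^2)_\infty\,\frac{1}{2\sin(\pi s/2)}.
\]
The kernel $\frac{1}{2\sin(\pi s/2)}$ has simple poles exactly at the even integers $s \in 2\Z$, with residue $\frac{1}{\pi}(-1)^{n/2}\cdot\frac{\pi}{?}$ — more precisely $\operatorname{Res}_{s=2k}\frac{1}{2\sin(\pi s/2)} = \frac{(-1)^k}{\pi}$. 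I would then take the contour $L_\infty$ (as in Figure \ref{figure_contour}) to be a path that separates the nonnegative even integers from the negative ones, and argue that $\frac{-1}{2i}\int_{L_\infty} F(s)\,ds = \frac{-1}{2i}\cdot 2\pi i \sum_{k\ge 0}\operatorname{Res}_{s=2k}F(s)$, i.e., by the residue theorem the integral picks up exactly the sum over even $n \ge 0$, reproducing $(-q^2;q^2)_\infty\, v_1^{[0]}(q)$ after checking the normalization constants (the $\frac{-1}{2i}$, the residue $\frac{1}{\pi}(-1)^{n/2}$, and the matching of $e^{\pi i s/4}$ with $(-i)^{n/2}$). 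The same scheme applies to $v_1^{[1]}(q)$, now with kernel $\frac{1}{2\cos(\pi s/2)}$, whose simple poles sit at the odd integers with residues $\frac{(-1)^{(n+1)/2}}{\pi}\cdot(\pm)$, matching the factor $i^{(n+1)/2}$ and the prefactor $\frac{e^{3\pi i/4}}{2}$; here one must also be careful that the argument of the exponentials switches from $z$ to $h$ as written, so I would double-check whether \eqref{eq:int_repv11} intends $z$ or $\varphi h = z$ throughout.

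The key technical steps, in order, are: (i) verify that $F(s)$ is meromorphic in a neighborhood of $L_\infty$ and in the region enclosed, with the only singularities being the simple poles of the trigonometric kernel — note $(-e^{-2sz}q^2;q^2)_\infty$ is entire in $s$ since $|q|<1$ forces the infinite product to converge locally uniformly, and $e^{-zs(s+1)/2}$ is entire; (ii) establish the decay of $F(s)$ needed to close the contour and justify the interchange of summation and integration — this is where the Gaussian factor $e^{-zs(s+1)/2}$ with $\re(z)>0$ does the work, giving exponential decay of $F(s)$ as $\re(s)\to +\infty$ within the relevant sector, while on the bounded parts of $L_\infty$ one uses that the contour avoids the poles; (iii) compute the residues and match constants. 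I would structure the write-up so that (ii) is the crux: one needs uniform control of $(-e^{-2sz}q^2;q^2)_\infty$ as $|s|\to\infty$ along $L_\infty$ (it stays bounded, or grows only polynomially, because $e^{-2sz}\to 0$), combined with $|e^{-zs(s+1)/2}|$ decaying like $e^{-\re(z)\,|s|^2/2}$, which dominates the polynomial growth of $\frac{1}{2\sin(\pi s/2)}$ away from its poles and the mild growth from $e^{\pi i s/4}$.

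The main obstacle I anticipate is pinning down the exact shape of $L_\infty$ in Figure \ref{figure_contour} and verifying that the contour genuinely encloses precisely the poles at the \emph{nonnegative} even (resp. odd) integers and no others, while simultaneously permitting the decay estimate: the contour must pass to the left of $s=0$ (to include the $n=0$ term) but asymptotically lie in the right half-plane where the Gaussian decay is effective, and it must stay a bounded distance from the lattice of poles so that $\frac{1}{2\sin(\pi s/2)}$ (resp. $\frac{1}{2\cos(\pi s/2)}$) is uniformly bounded on it. A secondary subtlety is the choice of branch for $e^{\pi i s/4}$ and $e^{-zs(s+1)/2}$ along the contour, which must be consistent with the evaluation $(-i)^{n/2}$ at the poles; I would fix the principal branch and check a couple of small cases ($n=0,2$ for $v_1^{[0]}$, $n=1,3$ for $v_1^{[1]}$) to confirm the signs and the overall prefactors $\frac{-1}{2i}$ and $\frac{e^{3\pi i/4}}{2}$. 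Once the contour and branches are fixed, the residue computation and constant-matching are routine.
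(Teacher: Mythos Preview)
Your proposal is correct and follows essentially the same approach as the paper: identify the poles and residues of the trigonometric kernel, apply Cauchy's residue theorem on the closed contour $L_R+C_R$ (where $C_R$ is a circular arc joining the two rays of $L_R$), and then show that the arc integral vanishes as $R\to\infty$ using the Gaussian decay from $e^{-zs(s+1)/2}$ together with a uniform bound on the Pochhammer factor. The paper's write-up makes the geometry explicit---the two rays of $L_\infty$ are chosen at angles $\pm\tfrac{\pi}{4}-\tfrac{\phi}{2}$ (with $z=\varphi h$, $\phi=\arg\varphi$) precisely so that $\re(zs^2)>0$ along them, and the needed estimate $|(-e^{-2sz}q^2;q^2)_\infty|\le\prod_{j\ge1}(1+|e^{-2j\re(z)}|)$ together with $|\sin(\pi s/2)|^{-1}\sim 2e^{-\pi|\im s|/2}$ makes your step~(ii) go through; your residue and constant checks in step~(iii) are exactly what the paper does.
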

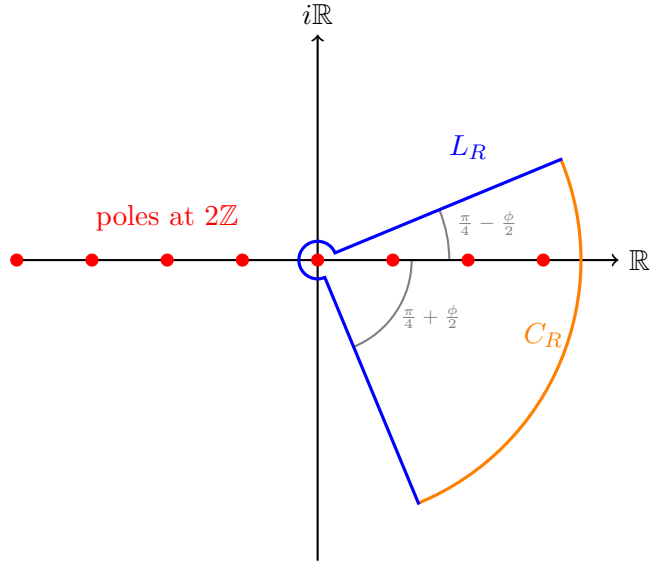
\begin{figure}[h]
\centering 
%\todo{angles!!!}

%Assuming $z=he^{\pi i/4}$, i.e. $\phi = \frac \pi 4 = 0.78539816\ldots$

%\begin{figure}[h]
\centering 
\tikzmath{
%We compute for z = h e^{I*Pi/8}
\x1 = {sqrt(2+sqrt(2))/2};
\y1 ={sqrt(2-sqrt(2))/2}; 
%Radius of the arc around 0
\d0 = .25;
%Radius of the arc in C_R
\Rr = 3.5;
%Radius of argument in I, resp IV. quadrant
\RI = 1.75; \RIV = 1.25; 
}
	\begin{tikzpicture}

%%		add args
%		I. quadrant
		\draw[gray, thick] ({\RI*\x1},{\RI*\y1}) arc [start angle=22.5, delta angle=-22.5, radius=\RI];
		\node[gray] at (2.55,0.5) {\tiny$\frac \pi 4 - \frac {\arg\varphi} 2-\varepsilon$};
%		IV. quadrant
		\draw[gray, thick] ({\RIV*\y1},{-\RIV*\x1}) arc [start angle=-67.5, delta angle=67.5, radius=\RIV];
		\node[gray] at (1.65,-1.25) {\tiny $\frac \pi 4 + \frac  {\arg\varphi} 2-\varepsilon$};

%%		cordi
		\draw[black, thick,->] (-4,0) -- (4,0)	node[anchor=west]{$\R$};
		\draw[black, thick,->] (0,-4) -- (0,3)	node[anchor=south]{$i\R$};

%%		poles
		\filldraw [red] (0,0) circle (.08);
		\filldraw [red] (1,0) circle (.08);
		\filldraw [red] (2,0) circle (.08);
		\filldraw [red] (3,0) circle (.08);
%		\filldraw [red] (4,0) circle (.08);
		\filldraw [red] (-1,0) circle (.08);
		\filldraw [red] (-2,0) circle (.08);
		\node[red, anchor = south] at (-2,.25) {poles at $2\Z$};
		\filldraw [red] (-3,0) circle (.08);
		\filldraw [red] (-4,0) circle (.08);

%%		C_R
		\draw[orange, very thick] ({\Rr*\x1},{\Rr*\y1}) arc [start angle=22.5, delta angle=-90, radius=\Rr];
%		add C_r
		\node[orange] at (3,-1) {$C_R$};

%%		L_R
%		in
		\draw[blue, very thick] ({(\Rr+.02)*\y1},{-(\Rr+.02)*\x1}) -- ({(\d0-0.02)*\y1},{-(\d0-0.02)*\x1});
%		out
		\draw[blue, very thick] ({(\d0-.02)*\x1},{(\d0-.02)*\y1}) -- ({(\Rr+.02)*\x1},{(\Rr+.02)*\y1});
		\node[blue] at (2,1.5) {$L_R$};

%%		arc around 0
		\draw[blue, very thick] ({\d0*\x1},{\d0*\y1}) arc [start angle=22.5, delta angle=270, radius=.25];

	\end{tikzpicture}
%\caption{Contours $L_R$, $L_\infty$ and $C_R$}\label{figure_contour}
%\end{figure}
\caption{Contours $L_R$ and $C_R$}\label{figure_contour}
\end{figure}

\begin{proof}
We will prove the statement for $v_1^{[0]}(q)$ in detail. The proof for $v_1^{[1]}(q)$ follows analogously.\\
As mentioned above, we write $z = \varphi h \in\C$ where $h\in\R_{>0}$ and  $\varphi\in\C$ with $|\varphi|=1$ and $|\arg\varphi |<\frac \pi 2$.
The function
$ \frac{1}{\sin\left({\pi s}/{2}\right)}$
has poles at $s\in 2\Z$ with residues
$(-1)^{s/2}\; \frac 2\pi$.

Hence, with $(-i)^{s/2} (-1)^{s/2} = i^{s/2} = e^{\pi i s/4}$ for $s\in2\Z$, we obtain by Cauchy's theorem - if the subsequent integrals are convergent  - using the contours from Figure \ref{figure_contour}
\bea
	&\frac {-1}{2 i} \frac 1 {(-q^2;q^2)_\infty}
	\lim_{R\to \infty}
		\int_{L_R+C_R}
			e^{\pi i s/4}
			e^{-zs(s+1)/2}
			(-e^{-2sz}q^2;q^2)_\infty
			\frac {1} {2\sin\left({\pi s}/2\right)}
		ds\\
	\= &\frac 1 {(-q^2;q^2)_\infty} \sum_{n\geq 0 \text{ even}}
	\mathop{\operatorfont{Res}}_{s=n}\left(
		(-i)^{s/2}
		e^{-zs(s+1)/2}
		(-e^{-2sz}q^2;q^2)_\infty
		\frac {\pi(-1)^{s/2}} {2\sin\left({\pi s}/2\right)}
	\right)\\
	\= &\frac 1 {(-q^2;q^2)_\infty} \sum_{n\geq 0 \text{ even}} (-i)^{n/2} e^{-zn(n+1)/2}\ (-e^{-2nz}q^2;q^2)_\infty\\
	\= 	&v_1^{[0]}(q).
\eea

It remains to prove the following 2 claims.
\begin{enumerate}
	\item The integral over $L_\infty$ converges.
	\item The integral over the arc $C_R$ vanishes as $R\to\infty$.
\end{enumerate}
Before proving (1) and (2) we make some initial observations.
If we parameterize $L_R$ and $C_R$ { away from the indention around $0$} by $s=re^{i \theta}$ with $0<r\leq R$ and
$\theta \in {\left[-\frac \pi 4 - \frac{{\arg\varphi}} 2+{\varepsilon}, \frac \pi 4 - \frac{{\arg\varphi}} 2 - \varepsilon\right]}$. Then we have $-zs^2 = -hr^2e^{i({\arg\varphi}+2\theta)}$ with
{${\arg\varphi}+2\theta\in[-\frac \pi 2 + 2\varepsilon, \frac \pi 2 - 2\varepsilon]$ for $\varepsilon>0$}, i.e. $\re(-zs^2)<0$. Similarly, one checks $\re(-zs)<0$.

In particular, the Pochhammer symbol can be uniformly bounded by 
\beal\label{bound_pochhammer}
	\left|(-e^{-2sz}q^2;q^2)_\infty\right|
	\;\leq\;  \prod_{j\geq 1} 1+ |e^{-2sz}||q^{2j}|
	\=\prod_{j\geq 1} 1+ |e^{-2\re(sz)}||e^{-2j\re(z)}|
	\;<\;\prod_{j\geq 1} 1+ |e^{-2j\re(z)}|,
\eeal
since $-\re(sz)<0$. 
Hence, we have 
\beal\label{first_int_est}
	&\left|\int_{{L_R'+} C_R}
		e^{\pi i s/4}
		e^{-zs(s+1)/2}
		(-e^{-2sz}q^2;q^2)_\infty
		\frac {1} {2\sin\left({\pi s}/2\right)}\right|\\
	&\hspace{.2in}\leq
	\prod_{j\geq1} (1+ |e^{-2j\re(z)}|)
	\int_{{L_R'+} C_R}
		e^{\re(\pi i s/4-zs(s+1)/2)}
		\left|\frac {1} {2\sin\left({\pi s}/2\right)}\right|
	\;ds 
\eeal
{where $L_R'$ denotes the part of $L_R$ away from the indention around $0$.}

As ${\im}|s| \to \infty,$ we have for all $L\in\N$
\begin{equation*}
	|\sin(\pi s/2)|
{	\= \frac 1 2 e^{\pi |\im(s)|/2} (1+o(|s|^{{-L}})}
\end{equation*}
and with $s=re^{i\theta}$ we compute
{
\bea
	\;&\hspace{-.6in}\re\left(\frac{\pi i s}4-\frac{zs(s+1)}2\right)-\frac{\pi |\im(s)|}2\\
	\=&{
	\re\left(
	  \frac{\pi i re^{i\theta}}4
	  - \frac{he^{i{\arg\varphi}}r^2e^{2i\theta}}2
	 -\frac{he^{i{\arg\varphi}}re^{i\theta}}2
	\right)
	- \frac{\pi |\im(s)|}2
	}\\
	\=&
	-\frac{\pi r \sin(\theta)}4+
	\re\left(
	  - \frac{hr^2e^{i({\arg\varphi}+2\theta)}}2
	 -\frac{hre^{i({\arg\varphi}+\theta)}}2
	\right)
	- \frac{\pi |\im(s)|}2
	\\
	\=&-\frac{\pi r  \sin (\theta)}4
	- \frac{r^2h^2\cos({\arg\varphi}+2\theta)}2
	-\frac{rh\cos({\arg\varphi}+\theta)}2
	-\frac{r\pi|\sin\theta|}2\\
	\=&
	-\frac{r^2h^2\cos({\arg\varphi}+2\theta)}2
	-r\left(\frac{\pi  \sin (\theta)}4
	+\frac{h\cos({\arg\varphi}+\theta)}2
	-\frac{\pi |\sin\theta|}2\right).
\eea}
{ Hence, the exponent in the integrand in (\ref{first_int_est}) is eventually negative}, since ${\arg\varphi}+2\theta\in (-\frac \pi 2, \frac \pi 2)$ and thus $\cos({\arg\varphi}+2\theta)>\delta>0$ for some $\delta$.\\
More precisely, we have for some constant $M>0$, {uniformly in $\theta$},
\bea
	\;&\hspace{-.7in}
	-\frac{r^2h^2\cos({\arg\varphi}+2\theta)}2
	-r\left(
	\frac{\pi  \sin (\theta)}4
	+\frac{h\cos({\arg\varphi}+\theta)}2
	-\frac{\pi |\sin\theta|}2\right)\\
	<&\;{-\frac{r^2h^2 \delta}2
	+ r\left(\frac \pi 4+\frac h 2-\frac \pi 2\right)}
	\;<\;-Mr^2
\eea
for $R$ and $r$ large enough. 

Therefore, it is sufficient to prove both claims for the integral
\bea
	\int
		e^{-Mr^2}
	\;ds.
\eea\\
\textit{Claim (1): The integral over $L_\infty$ converges.}\newline
We consider the integral along the contour $\{re^{i\theta_\pm}, r\in\R_{>0}\}$ with $\theta_\pm = \pm \frac \pi 4 - \frac{\arg\varphi}2$. By the discussion above, the integral is
\bea
	O\left(\int_{0}^{R}
		e^{-Mr^2}
	\;dr\right),
\eea
which converges as $R\to \infty$.
\smallskip \ \\
\textit{Claim (2): The integral over the arc $C_R$ vanishes as $R\to\infty$.}\newline
Similarly, we see that the integral over $C_R$ is eventually bounded by a constant times
\bea
	\int_{\left(-\frac \pi 4 - \frac{{\arg\varphi}} 2, \frac \pi 4 - \frac{{\arg\varphi}} 2\right)}
		e^{-MR^2}
	\;d\theta \to 0
\eea
as $R\to \infty$.
\end{proof}

\subsubsection{Sum over even integers}\label{sec:sumeveni}
We will prove the statement for $v_{1}^{[0]}$ in detail. As the proof for $v_{1}^{[1]}$ follows {\it mutatis mutandis}, we will only sketch it in Section \ref{sec:Sum over odd parts}.

\begin{proof}[Proof of (\ref{final_asymp_v10})]
We use the integral representation from Lemma \ref{int_repv1} with $q= ie^{-z}$ and substitute $s = iv/z$ to obtain
\begin{equation}\begin{aligned}\label{int_rep0}
	v_1^{[0]}(q)
	&\= \frac {-1} {2z(-q^2;q^2)_\infty}
		\int_{-izL_\infty}
			e^{-\pi v/4z}
			e^{v^2/2z-iv/2}
			(-e^{-2iv}q^2;q^2)_\infty
			\frac{1}{\sin\left(\frac{\pi i v}{2z}\right)}
		dv,
\end{aligned}\end{equation}
where the contour $-izL_\infty$ is depicted in Figure \ref{fig:contourihLinfty}.

\begin{figure}[h]
\centering 
\centering
\tikzmath{
%We compute for z = h e^{I*Pi/8}
\x1 = {sqrt(2+sqrt(2))/2};
\y1 = {sqrt(2-sqrt(2))/2}; 
%Radius of the arc around 0
\d0 = .25;
%Radius of the arc in C_R
\Rr = 4;
%Radius of argument in I, resp IV. quadrant
\RI = 1.75; \RIV = 1.25; 
}
\begin{tikzpicture}

%%		add args
%		I. quadrant
		\draw[gray, thick] ({-\RI*\y1},{-\RI*\x1}) arc [start angle=-22.5-90, delta angle=22.5, radius=\RI];
		\node[gray] at (-1.75,-1) {$\frac \pi 4 - \frac {\arg\varphi} 2 - \varepsilon$};
%		IV. quadrant
		\draw[gray, thick] ({\RIV*\x1},{-\RIV*\y1}) arc [start angle=-22.5, delta angle=-67.5, radius=\RIV];
		\node[gray] at (1.75,-1.3) {$\frac \pi 4 + \frac {\arg\varphi} 2 - \varepsilon$};

%%		cordi
		\draw[black, thick,->] (-4,0) -- (4,0)	node[anchor=west]{$\R$};
		\draw[black, thick,->] (0,-4) -- (0,4)	node[anchor=south]{$i\R$};

%%		poles
		\filldraw [red] (0,0) circle (.08);
		\filldraw [red] (-\y1,\x1) circle (.08);
		\filldraw [red] (-2*\y1,2*\x1) circle (.08);
		\filldraw [red] (-3*\y1,3*\x1) circle (.08);
		\filldraw [red] (-4*\y1,4*\x1) circle (.08);
		\filldraw [red] (1*\y1,-1*\x1) circle (.08);
		\filldraw [red] (2*\y1,-2*\x1) circle (.08);
		\filldraw [red] (3*\y1,-3*\x1) circle (.08);
		\filldraw [red] (4*\y1,-4*\x1) circle (.08);
		\node[red] at (-3,3) {poles at $2iz\Z$};

%%		L_R
%		in
		\draw[blue, very thick] ({-\y1/\x1*4},{-4}) -- ({-(\d0-.02)*\y1},{-(\d0-.02)*\x1});
		\draw[blue, very thick,>-] ({-\y1/\x1*4/2},{-4/2}) -- ({-(\d0-.02)*\y1},{-(\d0-.02)*\x1});

%		out
		\draw[blue, very thick] ({(\d0-.02)*\x1},{-(\d0-.02)*\y1}) -- ({4},{-\y1/\x1*4});
		\draw[blue, very thick,->] ({(\d0-.02)*\x1},{-(\d0-.02)*\y1}) -- ({4/2},{-\y1/\x1*4/2});

%		legend
		\node[blue] at (2.5,-.5) {$-i\varphi L_\infty$};

%%		arc around 0
		\draw[blue, very thick] ({\d0*\x1},{-\d0*\y1}) arc [start angle=-22.5, delta 
angle=270, radius=.25];

\end{tikzpicture}
\caption{The Contour $-i\varphi L_\infty$}\label{fig:contourihLinfty}
\end{figure}
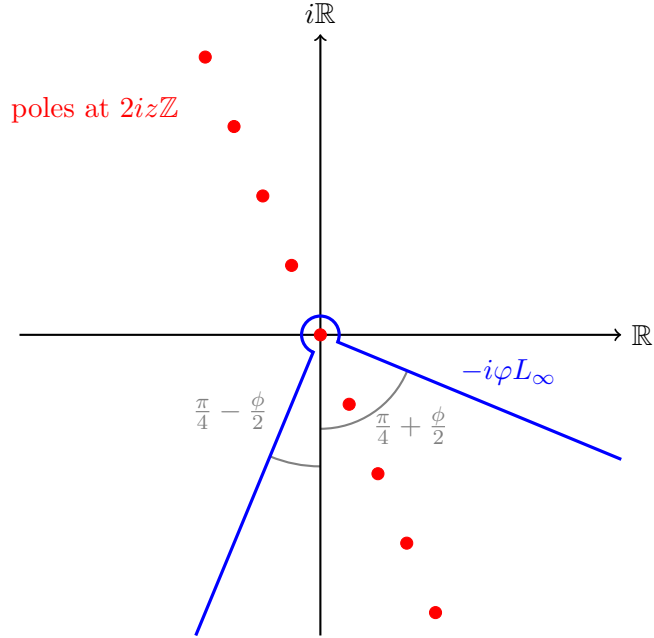

We consider the integral representation (\ref{int_rep0}) of $v_1^{[0]}$ and change the contour $-izL_\infty$ to a contour $\mathcal S$ with fixed minimum distance from $0$ and passing through $-\frac \pi {12}$.
The poles of the integrand in (\ref{int_rep0}) are at $v\in i2z\Z$, and as $z\to 0$ they accumulate at $0$.
Hence, if we integrate along the contour $\mathcal S$, all poles at $i2z\Z_{<0}$ eventually get shifted to the other side of the contour (cf. Figure \ref{fig:contourreimS}). In other words the integral \eqref{int_rep0} can be written as
	\beal\label{int_rep}
	\frac {-1} {2z(-q^2;q^2)_\infty}
	&\int_{\mathcal S}
			e^{-\pi v/4z}
			e^{v^2/2z-iv/2}
			(-e^{-2iv}q^2;q^2)_\infty
			\frac {1} {\sin\left(\frac{\pi i v}{2z}\right)}
		dv\\
		&\;+
		\frac {-1} {2z(-q^2;q^2)_\infty}
		\sum_{\substack{n<0:\\|2zn|<d_0}}
		  \mathop{\operatorname{Res}}_{v=-2izn}
		    {\left(
		   		e^{-\pi v/4z}
				e^{v^2/2z-iv/2}
				(-e^{-2iv}q^2;q^2)_\infty
				\frac {1} {\sin\left(\frac{\pi i v}{2z}\right)}
		    \right)}
	\eeal for some $d_0>0$, which is the distance from $0$ to where the contour crosses the line of poles.
{The residue of
$\frac 1{\sin\left(\frac{\pi i v}{2z}\right)}$
at $v=-2izn$ is given by $-(-1)^n 2z$ and thus
\bea[]
&\mathop{\operatorname{Res}}_{v=-2izn}
{\left(
e^{-\pi v/4z}
e^{v^2/2z-iv/2}
(-e^{-2iv}q^2;q^2)_\infty
\frac {1} {\sin\left(\frac{\pi i v}{2z}\right)}
\right)}
\=
(-i)^n e^{-(2n^2-n)z}
(-e^{-4nz}q^2;q^2)_\infty.
\eea
}
 
As $z\to 0$ the residues can be collected in
	\bea
		\phi^{[0]}_{(1/4)}(z)
		&\;:=\;\frac {-1} {2z(-q^2;q^2)_\infty}
		\sum_{\substack{n<0}}
		 \mathop{\operatorname{Res}}_{v=-2izn}
		    {\left(
		   		e^{-\pi v/4z}
				e^{v^2/2z-iv/2}
				(-e^{-2iv}q^2;q^2)_\infty
				\frac {1} {\sin\left(\frac{\pi i v}{2z}\right)}
		    \right)}\\[7pt]
&\=
\frac{1}{(-q^2;q^2)_\infty}
\sum_{n<0}
(-i)^n
e^{-(2n^2-n)z}
(-e^{-4nz}q^2;q^2)_\infty.
\eea
Using the $q$-Pochhammer symbol for negative indices this implies with $q = ie^{-z}$
\bea
\phi^{[0]}_{(1/4)}(z)
&\= \sum_{\substack{n<0:\\ n\equiv0\bmod 2}} \frac{q^{n(n+1)/2}}{(-q^2;q^2)_n}
		\=\sum_{\substack{l>0:\\ l\equiv0\bmod 2}}
			q^{-l(l-1)/2}(-1;q^2)_l
		\=2\!\!\!\sum_{\substack{m>0:\\ m\equiv1\bmod 2}}\!\!
			q^{-m(m+1)/2}(-q^2;q^2)_m.
\eea
We have
\bea
(-q^2;q^2)_m = \prod_{j=1}^m \bigl(1+(-1)^je^{-2jz}\bigr)
\in z^{\lceil m/2\rceil}\C[[z]]
\eea
 since $1+(-1)^je^{-2jz}\in z\C[[z]]$ for $j =1,\ldots, m$ odd.
Hence, $\phi^{[0]}_{(1/4)}(z)\in\C[[z]]$
and the first terms are given by
\bea
\phi^{[0]}_{(1/4)}(z)
&\=-4\*i\*z
 - 48\*z^2
 + \frac{2878}{3}\*i\*z^3
 + 26704\*z^4
 - \frac{28574401}{30}\*i\*z^5
 - \frac{207245984}{5}\*z^6\\
 &\quad+ \frac{2683138049759}{1260}\*i\*z^7
 + \frac{13245720939344}{105}\*z^8
 - \frac{768005626895809921}{90720}\*i\*z^9
+O(z^{10}).
	\eea

\begin{figure}
\centering 
\centering 
\tikzmath{
%We compute for z = h e^{I*Pi/8}
\x1 = {sqrt(2+sqrt(2))/2};
\y1 ={sqrt(2-sqrt(2))/2}; 
%Radius of the arc around 0
\d0 = .25;
%Radius of the arc in C_R
\Rr = 3.5;
%Radius of argument in I, resp IV. quadrant
\RI = 1.75; \RIV = 1.25; 
}
\begin{tikzpicture}

%%		cordi
		\draw[black, thick,->] (-4,0) -- (4,0)	node[anchor=west]{$\R$};
		\draw[black, thick,->] (0,-4) -- (0,4)	node[anchor=south]{$i\R$};

%%		poles
		\filldraw [red] (0,0) circle (.08);
		\filldraw [red] (-\y1,\x1) circle (.08);
		\filldraw [red] (-2*\y1,2*\x1) circle (.08);
		\filldraw [red] (-3*\y1,3*\x1) circle (.08);
		\filldraw [red] (-4*\y1,4*\x1) circle (.08);
		\filldraw [red] (1*\y1,-1*\x1) circle (.08);
		\filldraw [red] (2*\y1,-2*\x1) circle (.08);
		\filldraw [red] (3*\y1,-3*\x1) circle (.08);
		\filldraw [red] (4*\y1,-4*\x1) circle (.08);
		\node[red] at (-3,3) {poles at $2iz\Z$};

\draw[blue, very thick,-] 
({-\y1/\x1*(2-2/3)},4/3) -- (-\y1/\x1*2,0);
\draw[blue, very thick,<-] 
(-\y1/\x1*2,0) -- (-4*\y1/\x1,-4);

\draw[blue, very thick,->] 
({-\y1/\x1*(2-2/3)},4/3) -- (\y1/\x1*2,0);
\draw[blue, very thick,] 
(\y1/\x1*2,0) -- (4,{4/5 - 8*\x1/\y1/5)});
%\node[blue, anchor=south west] at (2,-1) {$\gamma'$};
\node[blue] at (-1.5,-1) {$\mathcal S$};

\begin{comment}
\begin{axis}[
	axis line style={draw=none},
	tick style = {draw=none},
	xticklabels={,,},
	yticklabels={,,},
	%axis x line=center,
	%axis y line=center,
	xshift = -60,
	yshift = -100,
	ymax = .1,
	ymin = -.3
	]
\addplot [blue, very thick,  domain=-20:20, samples=100,rotate=22.5] plot ({\x},{-ln(sqrt(2-cos(-4*\x)))});
\end{axis}
\filldraw [black] ({-1.26},0) circle (.08) node[anchor=south east] {$-\frac \pi {12}$};
\end{comment}

\end{tikzpicture}
\caption{A contour $\mathcal S$ (before applying Lemma \ref{asymp_lemma}).
}\label{fig:contourreimS}
\end{figure}
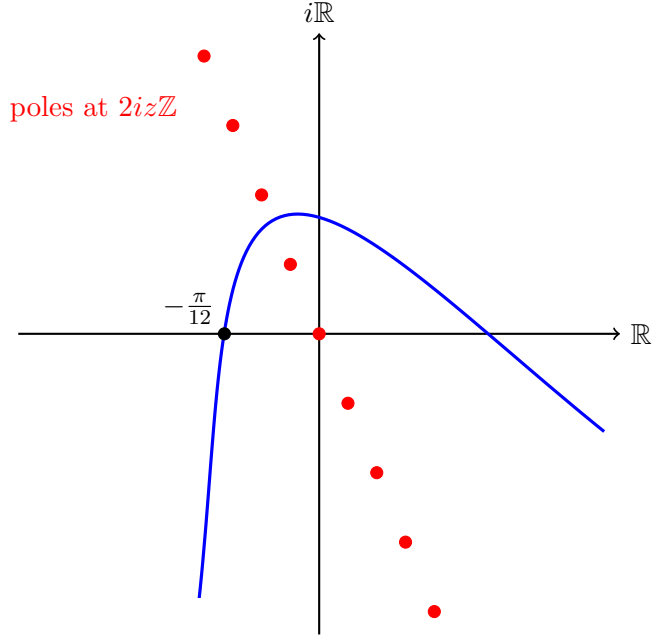

We apply Lemmas \ref{asymp_lemma} and \ref{lma:asymp_lemmaqpoch} to the integrands in \eqref{int_rep}, to obtain that $v_1^{[0]}(q)$ is {asymptotically} equal to
\begin{align}\label{int_rep3}
\frac {ie^{\pi^2/48z{-z/{12}}}}{2z}\!\! 
\int_{\mathcal S}
&\sign(\re(v/\varphi))
\exp\left(
\frac{
 -\Li_2^\varphi(e^{-4iv})+4v^2-2\pi v
 - 4\sign(\re(v/\varphi))\pi v}
 {8z}\right)\\
&
{\exp\left(-\frac{iv}2+ \frac {\Li_1^\varphi(-e^{-2iv})}2
+\psi_{-1}(4z;-e^{-2iv})
\right)
dv}
\;+\; 
\phi^{[0]}_{(1/4)}(z)
\end{align} 
which can be rewritten with
\begin{equation}\begin{aligned}\label{f_pm_def}
	f(v) &\= -\frac{\Li_2^\varphi(e^{-4iv})}8+\frac{v^2}2-\frac {\pi v}4- \sign(\re(v/\varphi)) \frac{\pi v}2,\\
{
g(z;v)} &\= 
{
\sign(\re(v/\varphi))
\exp\left(-\frac{iv}2+ \frac {\Li_1^\varphi(-e^{-2iv})}2
+\psi_{-1}(4z;-e^{-2iv})
\right),}
\end{aligned}\end{equation}
as
\beal\label{int_rep4}
v_1^{[0]}(q) &\=
		\frac {i}{2z} e^{\pi^2/48z{-z/{12}}}
		\int_{\mathcal S} 
			e^{f(v)/z} {g(z;v)}
		dv\;
{(1+O(|z|^L))}
		\;+\;\phi^{[0]}_{(1/4)}(z)
{(1+O(|z|^L))}
\eeal
{for all $L>0$.}
Note that both $f$ and $g$ are holomorphic functions on the domain
	\beal\label{eq:domain}
		\C\smallsetminus \left(\{\varphi i\R_{<0}\}\bigcup_{0\neq n\in\Z} \{n+\varphi i\R_{>0}\}\right)
	\eeal
as $\Li_2^\varphi(z)$ jumps by $2\pi i\log z$ when $z$ crosses the cut on $\re(z/\varphi)=0, \re(z)>0$. Moreover, $e^{\frac{\Li_1(-e^{-2iv})}2}$ changes the sign when $v$ crosses the line $\re(z/\varphi)=0,\ \re(z)>0$. The contour and the branch cuts of $f$ are plotted in Figure \ref{fig:contourS_asymp}.

\begin{figure}[h]
\centering 
\centering 
\tikzmath{
%We compute for z = h e^{I*Pi/8}
\x1 = {sqrt(2+sqrt(2))/2};
\y1 ={sqrt(2-sqrt(2))/2}; 
%Radius of the arc around 0
\d0 = .25;
%Radius of the arc in C_R
\Rr = 3.5;
%Radius of argument in I, resp IV. quadrant
\RI = 1.75; \RIV = 1.25; 
}

\begin{tikzpicture}

%%		cordi
		\draw[black, thick,->] (-4,0) -- (4,0)	node[anchor=west]{$\R$};
		\draw[black, thick,->] (0,-4) -- (0,4)	node[anchor=south]{$i\R$};

%%		banch cuts
	\draw[red, very thick] (0,0) -- (4*\y1/\x1,-4);
	\draw[red, very thick] (0+3,0) -- ({(-4*\y1+3)/\x1},4);
	\draw[red, very thick] (0-3,0) -- ({(-4*\y1-3)/\x1},4);

	\node[red, anchor=south] at ({(-4*\y1+3)/\x1},4) {\tiny{$\re((v-\frac \pi2)/\varphi) = 0$}};
	\node[red, anchor=south] at ({(-4*\y1-3)/\x1},4) {\tiny{$\re(v+\frac \pi2 )/\varphi) = 0$}};
	\node[red, anchor=south west] at (4*\y1/\x1,-4) {\tiny{$\re(v/\varphi) = 0$}};

%% new contour
\draw[blue, very thick,-] 
({-\y1/\x1*(2-2/3)},4/3) -- (-\y1/\x1*2,0);
\draw[blue, very thick,<-] 
(-\y1/\x1*2,0) -- (-4*\y1/\x1,-4);

\draw[blue, very thick,->] 
({-\y1/\x1*(2-2/3)},4/3) -- (\y1/\x1*2,0);
\draw[blue, very thick,] 
(\y1/\x1*2,0) -- (4,{4/5 - 8*\x1/\y1/5)});
\node[blue, anchor=south west] at (2,-1) {$\gamma'$};
\node[blue, anchor=south west] at (-1.5,-1) {$\gamma$};

\begin{comment}
\begin{axis}[
	axis line style={draw=none},
	tick style = {draw=none},
	xticklabels={,,},
	yticklabels={,,},
	%axis x line=center,
	%axis y line=center,
	xshift = -60,
	yshift = -100,
	ymax = .1,
	ymin = -.3
	]
\addplot [blue, very thick,  domain=-20:20, samples=100,rotate=22.5] plot ({\x},{-ln(sqrt(2-cos(-4*\x)))});
\end{axis}
\end{comment}
\filldraw [black] (-\y1/\x1*2,0) circle (.06) node[anchor=south east] {$-\frac \pi {12}$};
\end{tikzpicture}
\caption{The contours $\mathcal S$, parameterized by $\gamma$, $\gamma'$ (after applying Lemma \ref{asymp_lemma}).}
\label{fig:contourS_asymp}
\end{figure}

We compute 
\begin{equation*}\begin{aligned}
	f'(v) &\=
	  -\frac i 2 \log(1-e^{-4iv})
	  +v
	  -\frac{\pi}4 - {\sign(\Re(v/\varphi))} \frac{\pi} 2\\
	f''(v) &\= \frac{1+e^{-4iv}}{1-e^{-4iv}}
\end{aligned}\end{equation*}
and thus the critical points $v_0$ of $f$ satisfy
\begin{equation}\label{eq:nahmequation}
(1-e^{-4iv_0})^2 \= -e^{-4iv_0},
\end{equation}
in other words $e^{-4iv_0} = e(\pm\frac{1}6)$.  Using this along with the condition $f'(v) =0$ at saddle points, one may easily classify all saddle points of $f$.

Since we are interested in saddle points between the two branch cuts in the upper half-plane emanating from $-\pi/2$ and $0$ we easily check that there are two saddle points which maximize the real part of $f$, given by $-5\pi/12$ and $-\pi/12$. Away from the branch cuts, the function $f$ is holomorphic in the complex plane, and so in particular we may deform the contour of integration by Cauchy's theorem. We choose the contour passing through the saddle point $-\pi/12$ given by the union of two pieces - again noting that by Cauchy's theorem we are free to choose our path of integration as long as it remains in the region where the integrand is holomorphic. 
The first piece is given by $\gamma(s) = -\pi/12 - s\sqrt{i\varphi}$ for $s \in (-c_0, \infty)$ where $c_0 =\pi/2 - \varepsilon$ for some fixed choice of $\varepsilon>0$. Let $\varphi = -ie^{it}$ and $d(x) \coloneqq -\frac{\pi}{12} + \frac{x}{ \Re(\sqrt{i\varphi})} \sqrt{i\varphi}$, noting that $d(\pi/12)$ is the point at which $\gamma(s)$ intersects the line $i\R \varphi$. The second piece is given piecewise by 
\begin{align*}
	\gamma'(s) = \begin{cases}
		d(\pi/12)  -i s\sqrt{i \varphi}, &\text{ if }  0< t < \pi/2,\\
		d(\pi/12) -i s \sqrt{-i/\varphi}, &\text{ if } \pi/2 \leq t < 3\pi/4,\\
		d(\pi/4) -i s \sqrt{-i/\varphi}, &\text{ if } 3\pi/4 \leq t < \pi.
	\end{cases}
\end{align*}

	We now aim to show that $\Re(f(\gamma(s))/\varphi)$ is maximized at $s=0$, i.e. at the saddle point, so that $\gamma(s)$ is a stationary contour. For this, we first rewrite
\begin{align*}
	\Re\left(\frac{f(\gamma(s))}{\varphi} \right) = s^2 	\Re\left(\frac{f(\gamma(s))}{s^2 \varphi} \right).
\end{align*}
Now making the change of variable $s \sqrt{i \varphi} = \mathfrak{z}$ we obtain
\begin{align*}
	s^2 	\Re\left(\frac{i f(-\pi/12 - \mathfrak{z})}{\mathfrak{z}} \right).
\end{align*}
The function inside the real part is holomorphic and non-constant on the open domain $D_T \coloneqq \{\mathfrak{z} \in \C \colon \Re(\mathfrak{z} > 0, \Im(\mathfrak{z})>0, |z| < T)\}$ by construction, and so the maximum modulus principal for harmonic functions implies that it takes its maximal real part on the boundary of $D_T$. This occurs when $\varphi = \pm i$, when $|\mathfrak{z}| = s =T$ (and we take a limit $T \to \infty$), or when $s =0$.

Firstly, using e.g.\@ SageMath \cite{sage}, we may explicitly compute a (complicated) expression for $\Re(\frac{f(\gamma(s))}{\varphi})$. Using this along with the fact that $\Li_2(z) \sim z$ as $|z| \to 0$ we have that $\Re(\frac{f(\gamma(s))}{\varphi}) \sim - c s$ for some positive constant $c$ as $s \to \infty$. This means that on the boundary piece $|\mathfrak{z}| = s =T$ as $T \to \infty$ that the real part is strictly smaller than at the saddle point.

Now, when $s=0$ for any $\varphi$ we have equality, since we are at the saddle point itself. 

Finally, we consider the cases where $\varphi = \pm i$. First let $\varphi = -i$. A direct calculation (again using SageMath) shows that there is equality of maximal real part to that at the saddle point for certain $s>0$ (but it is never exceeded). Another direct calculation shows that on $\varphi = i$ and $s>0$ the real part is strictly smaller than at the saddle point. Noting that we do not allow these two boundary cases in our application shows that $\gamma(s)$ is a stationary contour for $s\in (0,\infty)$.

For $s \in (-c_0,0)$ the argument is similar, simply noting that a direct calculation for $s = -c_0$ yields a strictly smaller real part than at the saddle point, and so overall we obtain that $\gamma(s)$ is a stationary contour for $s \in (-c_0,\infty)$. The calculations for the second piece of the contour are similar, using the maximum modulus principal on each piece.

We write $Q \coloneqq e^{-4iv_0} = e(1/6) = \frac{1+\sqrt 3 i}2$.
 If we parameterize $\mathcal S$ as $v=-\frac \pi {12} +is\sqrt z$ in a small neighborhood around $v_0$ we obtain that the contribution corresponding to the stationary point $v_0 = -\frac \pi {12}$ is given by
 
\bea[]
&\frac{-1}{2\sqrt{z}}
e^{\pi^2/48{z}+{f}(v_0)/z{-z/{12}}}
\int
\exp
\biggl(-f''(-\tfrac \pi {12}) \frac {s^2}2
+\sum_{l\geq 3}
\frac{f^{(l)}(Q)}{l!} (is)^lz^{l/2-1}
\biggr)
g\Bigl(-\frac \pi {12} +is\sqrt z;z\Bigr)
ds\\
\=&\sqrt{\frac{2\pi i}{-z}}
e^{\pi^2/48{z}+{f}(v_0)/z}\; 
\gamma_{(1/4)}^{-}(z)
\eea
where
\bea
\frac{\pi^2}{48}+f(v_0)
&\=\frac{\pi^2}{48}-\frac{\Li_2(e^{-4iv_0})}8+\frac{v_0^2}2+\frac{\pi v_0}4
\=\frac{D(e(1/6)) i}8 
\= -V \= -0.1268677\ldots i.
\eea
With the definition of $\psi_{-1}(4z;-e^{-2iv})$ from Lemma~\ref{lma:asymp_lemmaqpoch} the power series
$\gamma_{(1/4)}^{-}(z)\in\C[[z]]$ is defined as a formal Gaussian integration by
\beal\label{eq:defgammam}
\gamma_{(1/4)}^{-}(z)
\=
&\frac{1}{2\sqrt{-2\pi i}}
e^{-\pi i/24-z/12}\\
&\qquad\times\int
\exp\biggl(
-\sqrt 3 i\frac{s^2}2 + \frac{is\sqrt z}2
+\frac 1 8 \sum_{l\geq 3} \Li_{2-l}(Q)
\frac{(4s\sqrt z)^l}{zl!}\\
&\qquad\qquad\qquad
-\sum_{t=1}^2 \sum_{k\geq 1, r\geq0}
B_k\biggl(1-\frac t2\biggr)
\Li_{2-k-r}(-(-1^t)\sqrt Q)
\frac{(2s\sqrt z)^r(4z)^{k-1}}{r!k!}
\biggr)ds.
\eeal
The first coefficients are given by
\beal
\gamma_{(1/4)}^{-}(z)
\= \frac 1 {2\sqrt[4]{3(2-\sqrt 3)}}
\left(
1
+\left(\frac 1 3+\frac {77}{216}\sqrt{3}\right)iz
-\left(
\frac{89449}{31104}
+\frac{647}{648}\sqrt 3
\right) z^2+O(|z|^3)
\right).
\eeal
Putting everything together, we obtain
the asymptotic expansion
\bea
v_1^{[0]}(q)
&\= \sqrt {\frac{2\pi i}{-z}} e^{-V/z }\; \gamma^-_{(1/4)}(z)
{(1+O(|z|^L))}
\;
	 +\phi^{[0]}_{(1/4)}(z)
{(1+O(|z|^L))}
\eea
 
for all $L>0$.  
If $\arg{\varphi} >0$, the exponential contribution is the biggest term in (\ref{final_asymp_v10}) and for $\arg{\varphi} <0$, the power series $\phi^{[0]}_{(1/4)}(z)$ has the largest contribution.
\end{proof}

\subsubsection{Sum over odd parts}\label{sec:Sum over odd parts}
\begin{proof}[Proof of (\ref{final_asymp_v11})]
The asymptotics of $v_{1}^{[1]}(q)$ as defined in (\ref{eq:defv10v11}) as $q\to i$ is similar.

{We change the contour in the integral representation in Lemma \ref{int_repv1} to a stationary contour $\mathcal S$.}
After applying the asymptotics from Lemma \ref{asymp_lemma} and Lemma \ref{lma:asymp_lemmaqpoch} and following a similar argument as in Section \ref{sec:sumeveni} we obtain
{that $v_1^{[1]}(q)$ is asymptotically equal to}
\bea
\frac {-e^{\pi i 3/4}}{2z} e^{\pi^2/48z{-z/12}}
\int_{\mathcal S}
&\sign(\re(v/\varphi))
\exp\left(
\frac{
-\Li_2(e^{-4iv})+4v^2+2\pi v
-4\sign(\Re(v/\varphi))\pi v}
{8z}
\right)\\
&\exp\left(
-\frac{iv}2
+ \frac{\Li_1^\varphi(e^{-2iv})}{2}
+\psi_{-1}(4z;e^{-2iv})
\right)
dv
+\phi_{(1/4)}^{[1]}(z),
\eea

where
\bea
	\phi_{(1/4)}^{[1]}(z)
	\=&\sum_{\substack{n<0 \\ n \equiv 1 \bmod 2}}
	\frac{q^{n(n+1)/2}}{(-q^2;q^2)_n}
	\= \sum_{\substack{l>0:\\ l\equiv1\bmod 2}}
			q^{-l(l-1)/2}(-1;q^2)_l
		\=2\sum_{\substack{m\geq 0:\\ m\equiv0\bmod 2}}
			q^{-m(m+1)/2}(-q^2;q^2)_m\\[7pt]
	\=
	 &2 + 8 i z - 96 \, z^{2}
	 - \frac{5708}{3} i \, z^{3}
	 + 52640 \, z^{4} 
	 + \frac{28056121}{15} i \, z^{5}
	 - \frac{405909568}{5} \, z^{6} \\
	 &- \frac{2622584263067}{630} i \, z^{7}
	 + \frac{5171242573856}{21} \, z^{8}
	 +\frac{748741881749741041}{45360} i \, z^{9} 
	 +O(z^{10}).
\eea
We define
\bea
	f(v) &\= -\frac{\Li^\varphi_2(e^{-4iv})}8+\frac {v^2}2+\frac{\pi v} 4 - \sign\left(\re\left( v /\varphi\right)\right) \frac{\pi v}2,\\
g(z;v) &\={
\sign(\re(v/\varphi))
\exp\left(
-\frac{iv}2
+ \frac{\Li_1^\varphi(e^{-2iv})}{2}
+\psi_{-1}(4z;e^{-2iv})
\right)
}
\eea
where $f$ and $g$ are  holomorphic functions on the domain defined in (\ref{eq:domain}) for the same reason as in Section \ref{sec:sumeveni}.
Then
\bea
v_1^{[1]}(q) &\= 
\frac {-e^{\pi i 3/4}}{2z} e^{\pi^2/48z{-z/12}}
\int_{\mathcal S}
e^{f(v)/z} g(z;v)
dv
{(1+O(|z|^L))}
+\phi_{(1/4)}^{[0]}(z)
{(1+O(|z|^L))}
 \eea
{for all $L>0$}
and we compute
\bea
	f'(v) &\=
	-\frac i 2 \log(1-e^{-4iv})
	+v+ \frac{\pi v} 4
	- \sign\left(\re\left( v/ \varphi\right)\right)
	\frac{\pi v}2\\
	f''(v) &\= \frac{1+e^{-4iv}}{1-e^{-4iv}}
\eea
such that the unique stationary point of $f$ is $v_0 = \frac \pi {12}$.  Similar arguments as above using the saddle-point method imply that the contribution corresponding to the stationary point $v_0 = \frac \pi {12}$  is given by 
\bea[]
&e^{\pi^2/48{z}+{f}(v_0)/z{-z/{12}}}
\int
\exp
\biggl(-f''(\tfrac \pi {12}) \frac {s^2}2
+\sum_{l\geq 3}
\frac{f^{(l)}(Q)}{l!} (is)^lz^{l/2-1}
\biggr)
g\Bigl(\frac \pi {12} +is\sqrt z;z\Bigr)
ds\\
&\=e^{V/z }
\sqrt{\frac{2\pi i}{z}}\; \gamma^+_{(1/4)}(z)
\eea
where 
\beal\label{eq:defgammam2}
\gamma_{(1/4)}^{+}(z)
\=
&\frac{1}{2\sqrt{2\pi}}
e^{-\pi i/24-z/12}\\
&\qquad\times\int
\exp\biggl(
\sqrt 3 i\frac{s^2}2 + \frac{is\sqrt z}2
+\frac 1 8 \sum_{l\geq 3} \Li_{2-l}(Q)
\frac{(4s\sqrt z)^l}{zl!}\\
&\qquad\qquad\qquad
-\sum_{t=1}^2 \sum_{k\geq 1, r\geq0}
B_k\biggl(1-\frac t2\biggr)
\Li_{2-k-r}((-1^t)\sqrt Q)
\frac{(2s\sqrt z)^r(4z)^{k-1}}{r!k!}
\biggr)ds.
\eeal
The first coefficients are given by
\bea
\gamma^{+}_{(1/4)}(z)\=
\frac 1 {2\sqrt [4] {3(2-\sqrt 3)}}
\left(
1+
\left(\frac 1 3-\frac{77}{216}\sqrt3 \right)iz
+\left(-\frac{89449}{31104}+\frac{647}{648}\sqrt 3\right)z^2+O(|z|^3)
\right).
\eea
Putting everything together, we obtain
the asymptotic expansion
\bea
v_1^{[1]}(q)
&\= \sqrt {\frac{2\pi i}{z}} e^{V/z }\; \gamma^+_{(1/4)}(z)
(1+O(|z|^L))
\;
+\phi^{[1]}_{(1/4)}(z)
 {(1+O(|z|^L))}
\eea
for all $L>0$
which completes the proof.
\end{proof}

\subsection{Proof of Theorem \ref{Thm: radial asymp}, (2)}\label{sec:pfasympv12}
Throughout let $\zeta = e(\alpha) = e(r/m)$ be a root of unity of order $m$ divisible by $4$.
As in Section \ref{sec:sumeveni} we split up the sum 
defining $v_1(q)$ depending on $n\bmod m$ 
 
\begin{equation}\label{eq:splitv1}
v_1(q) \;=\sum_{n_0 = 0}^{\frac m 2-1} v_1^{[n_0]}(q),
\end{equation}
where
\begin{equation}\label{eq:defv10v11}
v_1^{[n_0]}(q) \; = \sum_{\substack{n\geq 0\\n \equiv n_0 \bmod \tfrac m2}} \frac {q^{n(n+1)/2}}{(-q^2;q^2)_n}.
\end{equation}
for $n_0\in\{0,\ldots, \tfrac m 2-1\}$.
We start with the following lemma.
\begin{lemma}\label{lemma:zetatothenn}Let $\zeta = e(\alpha) = e\left(\frac r m\right)$ be as above.
	\begin{enumerate}
		\item Choose $\overline r \in \Z$ with $\overline r = r \bmod 4$. If $n = n_0 \bmod \frac m2$ then
			\bea
				\zeta^{n(n+1)/2}
				\= \zeta^{n_0(n_0+1)/2}
				e\left(\frac{n-n_0}4
				+(-1)^{n_0}\frac {\overline r} 2  \frac{n-n_0}m\right).
			\eea
		\item Let $\oor = \pm 1$ such that $\oor =r\bmod 4$. Then for all $n = n_0 \bmod \frac m2$
			\bea
				\zeta^{n(n+1)/2} (-1)^{2 (n-n_0)/m}
				= \zeta^{n_0(n_0+1)/2}
				e\Big((-1)^{m/4+n_0+1}\ \oor\ \frac {n-n_0}{2m}\Big).
			\eea
	\end{enumerate}
\end{lemma}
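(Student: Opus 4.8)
The plan is to reduce both identities to congruences modulo $1$ between the arguments of the exponentials, using that $e(x)=e(y)$ exactly when $x-y\in\Z$. Throughout I would write $n=n_0+k\tfrac m2$ with $k=\tfrac{2(n-n_0)}{m}\in\Z$; this is where the hypothesis $n\equiv n_0\bmod\tfrac m2$ enters, and since $4\mid m$ we also have $\tfrac m2$ even, so $n\equiv n_0\bmod 2$. I will repeatedly use $\zeta^a=e(ra/m)$ for $a\in\Z$, and that $\gcd(r,m)=1$ together with $4\mid m$ forces $r$ to be odd.

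For part (1), I would first compute the triangular-number difference
\[
\frac{n(n+1)}2-\frac{n_0(n_0+1)}2=\frac{(n-n_0)(n+n_0+1)}2=\frac{km}{4}\Bigl(2n_0+1+k\tfrac m2\Bigr),
\]
where $4\mid m$ guarantees $\tfrac{n-n_0}{2}=\tfrac{km}{4}\in\Z$. Raising $\zeta$ to this power gives $\zeta^{n(n+1)/2}=\zeta^{n_0(n_0+1)/2}\,e\!\bigl(\tfrac{rk(2n_0+1)}{4}+\tfrac{rk^2m}{8}\bigr)$, so it remains to check that $\tfrac{rk(2n_0+1)}{4}+\tfrac{rk^2m}{8}$ agrees modulo $1$ with the claimed exponent $\tfrac{n-n_0}{4}+(-1)^{n_0}\tfrac{\overline r}{2}\tfrac{n-n_0}{m}=\tfrac{km}{8}+(-1)^{n_0}\tfrac{\overline r k}{4}$. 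Taking the difference, this comes down to the two elementary divisibilities $4\mid r(2n_0+1)-(-1)^{n_0}\overline r$ and $8\mid(rk^2-k)m$. The first holds since $r(2n_0+1)\equiv(-1)^{n_0}r\equiv(-1)^{n_0}\overline r\pmod4$ (split on the parity of $n_0$), and the second since $rk^2-k=k(rk-1)$ is always even (using that $r$ is odd when $k$ is odd) while $4\mid m$.

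For part (2), I would invoke part (1) with the admissible choice $\overline r=\oor\in\{\pm1\}$ and use $(-1)^{2(n-n_0)/m}=(-1)^k=e(k/2)=e\!\bigl(\tfrac{n-n_0}{m}\bigr)$. Then the left-hand side equals $\zeta^{n_0(n_0+1)/2}\,e\!\bigl(\tfrac{km}{8}+(-1)^{n_0}\tfrac{\oor k}{4}+\tfrac k2\bigr)$, and it must be matched modulo $1$ against $\zeta^{n_0(n_0+1)/2}\,e\!\bigl((-1)^{m/4+n_0+1}\oor\,\tfrac k4\bigr)$. The difference of the two arguments simplifies to $\tfrac{km}{8}+\tfrac k2+\tfrac{\oor(-1)^{n_0}k}{4}\bigl(1+(-1)^{m/4}\bigr)$, and here I would split into the cases $8\mid m$ and $m\equiv4\bmod8$: in the first case $\tfrac{km}{8}\in\Z$ and the last term becomes $\tfrac{\oor(-1)^{n_0}k}{2}$, so the argument reduces to $\tfrac{k(1+\oor(-1)^{n_0})}{2}\in\Z$; in the second case the last term vanishes and $\tfrac{km}{8}\equiv\tfrac k2\pmod1$, so the argument reduces to $k\in\Z$. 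In both cases the exponential is trivial, giving the claim.

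The only genuine subtlety is the $2$-adic bookkeeping — tracking precisely which powers of $2$ divide $m$, $k$, $rk^2-k$, and the various products — and in particular the factor $(-1)^{m/4}$, which is exactly why part (2) forces the case distinction on $m\bmod 8$; all remaining steps are direct substitution.
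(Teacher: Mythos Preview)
Your proof is correct and follows essentially the same strategy as the paper: write $n=n_0+k\tfrac m2$ and reduce everything to $2$-adic congruences using that $4\mid m$ and $r$ is odd. The organization differs slightly. For part~(1) you compute the triangular-number difference first and isolate two clean divisibility checks, whereas the paper expands $e(\alpha n(n+1)/2)$ term by term and uses $k^2\equiv k\pmod 2$ to collapse the quadratic piece; these are just two ways of bookkeeping the same computation. For part~(2) there is a genuine, if minor, tactical difference: you take $\overline r=\oor$ directly and split on $m\bmod 8$, while the paper instead chooses $\overline r=\oor+4l$ with a specific $l$ (depending on the parity of $m/4$ and of $n_0$) engineered so that the simplification goes through without an explicit $8\mid m$ versus $m\equiv 4\pmod 8$ split. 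Both routes land on the same dichotomy governed by $(-1)^{m/4}$, and yours is arguably the more transparent of the two.
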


\begin{proof} We begin by proving part (1).  
\begin{enumerate}
\item We write with $n=km/2+n_0$, $k= 2\frac{n-n_0}{m}$,
	\bea
		\zeta^{n(n+1)/2}
		\= &e\left({\alpha n(n+1)}/2\right)\\
		\= &e\left(\alpha \left(\tfrac {km} 2+n_0\right) \left(\tfrac{km}2+n_0+1\right)/2\right)\\
		\= &e\left(\alpha \left(\tfrac{km}2+n_0\right)^2/2 + \alpha\left(\tfrac{km}2+n_0\right)/2\right)\\
		\= &e\left(\alpha \left(
		\tfrac{k^2 m^2}8 + \tfrac{km n_0}2 +  \tfrac{n_0^2}2 + \tfrac{k m}4 +  \tfrac{n_0}2\right)\right).
	\eea
	Note that $\frac{\alpha m^2}8 \in \frac 1 2 \Z$ and thus $e(\alpha\frac{k^2 m^2}8)=e(\alpha \frac{k m^2}8)$ as $k^2 = k \bmod 2$. Hence, we obtain
	\bea
		\zeta^{n(n+1)/2} 
		\= &e\left(\alpha k\left(\frac{m^2}8 + \frac{mn_0}2 +  \frac m 4\right)\right)\; e\left(\alpha \frac{n_0(n_0+1)}2\right)\\
		\= &e\left(\alpha k m\left(\frac m 8 + \frac{n_0} 2 +  \frac 1 4\right)\right)\; \zeta^{n_0(n_0+1)/2}.
	\eea
Moreover, we compute
\bea
	e\left(\alpha k m\left(\frac m 8 + \frac{n_0} 2 +  \frac 1 4\right)\right)
	\=&e\left(\alpha \frac {k m} 4\left(\frac m 2 + 2n_0 +  1\right)\right)\\
\eea
and note that the denominator of $\alpha \frac {k m} 4$ is either $1$ or $4$. If $n_0$ is even, $2n_0$ is divisible by $4$ and $e(\alpha \frac {k m} 4 2n_0)) =1$. Otherwise, $2n_0+2$ is divisible by $4$ and thus $e(\alpha \frac {k m} 4 (2n_0+1)) =e(-\alpha \frac {k m} 4)$. In other words,
\bea
	e\left(\alpha \frac {k m} 4\left(\frac m 2 + 2n_0 +  1\right)\right)
	\=&\begin{cases}
		e\left(\alpha \frac {k m} 4\left( \frac m 2 + 1\right)\right), & \text{if $n_0$ is even,}\\
		e\left(\alpha \frac {k m} 4\left( \frac m 2 - 1\right)\right), & \text{if $n_0$ is odd,}\\
	\end{cases}\\
	\=&e\left(\alpha \frac {k m} 4\left( \frac m 2 + (-1)^{n_0}1\right)\right)\\
	\=&e\left(\alpha \frac{km^2}8\right)\ e\left((-1)^{n_0}\alpha \frac{mk}4\right).
\eea
Note that $\alpha \frac{km^2}8\in \frac 1 2 \Z$, hence 
\bea
	e\left(\alpha \frac{km^2}8\right)
	\= e\left(\frac{km}8\right) \= e\left(\frac {n-n_0}4\right).
\eea
Moreover, we note that $\alpha \frac{mk}4$ has denominator $4$ such that 
\bea e\Big((-1)^{n_0}\alpha \frac{mk}4\Big) = e\Big((-1)^{n_0}\overline r \frac k 4\Big)=e\Big((-1)^{n_0}\frac {\overline r} 2  \frac{n-n_0}m\Big).\eea
This proves the first part of the lemma. \ \\

\item To prove part (2), we choose $\overline r$ with $\overline r = \oor +4l$, $l\in\Z$ such that
\bea
	(-1)^{n_0} 2l + \frac m4-1 \=
	\begin{cases}
		0,	& \text{if $\frac m 4$ is odd,}\\
		-(-1)^{n_0}\oor, & \text{if $\frac m 4$ is even.}\\
	\end{cases}
\eea
We continue with the notation from above with $(-1)^{2 (n-n_0)/ m} = e(-\frac k 2)$:
\bea
	e\left(\tfrac{km}8\right)
	e\left((-1)^{n_0}\tfrac{\overline r k}4\right)e\left(-\tfrac k 2\right)
	\=&e\left(
		\tfrac{km}8
		+(-1)^{n_0}\tfrac{\overline r k}4
		-\tfrac k 2\right)\\
	\=&e\left(
		k\; \left(\tfrac m 4+(-1)^{n_0}\tfrac {\overline r }2-1\right)/2\right)\\
	\=&e\left(k\; \left(\tfrac m 4+(-1)^{n_0}\tfrac{\oor+4l}2-1\right)/2\right)\\
	\=&e\left(k\; 
	\left(\tfrac m 4+(-1)^{n_0}\tfrac \oor 2+(-1)^{n_0}2l-1\right)/2\right).
\eea
Using the choice of $\overline r$, the last expression becomes
\bea[]
	\begin{cases}
		e\left(k\; (-1)^{n_0}\frac \oor 4\right),
			&\text{if $\frac m 4$ is odd,}\\[6pt]
		e\left(k\; \left((-1)^{n_0}\frac \oor 2 - (-1)^{n_0}\oor\right)/2\right),
			&\text{if $\frac m 4$ is even,}
	\end{cases}
\eea
which in both cases is equal to
\bea[]
	e\left(k\ (-1)^{m/4-1+n_0}\;\frac{\oor}4\right)
	\= & e\left((-1)^{m/4-1+n_0}\;\oor \frac {n-n_0}{2m}\right).
\eea \qedhere
\end{enumerate}
\end{proof}

{
From now on we write
\bea
	 \delta \;:=\;(-1)^{m/4+n_0+1}\ \oor
\eea}
with $\oor=\pm 1$ as in the previous lemma.

\subsubsection{Integral representation}
\begin{lemma}\label{int_repv1n0}
	Let $L_\infty$ be the contour depicted in Figure \ref{figure_contour}. Then with $q=\zeta e^{-z}$,
\bea
v_1^{[n_0]}(q)
	&\=
	\frac
		{-\zeta^{n_0(n_0+1)/2}
			e({ \delta} \tfrac {n_0}{2m})}
		{m(-q^2;q^2)_\infty}
	\int_{L_\infty}
		e^{\pi { \delta}  {t}/{m}}
		e^{zt^2/2} e^{-izt/2}
		\frac {(-\zeta^{2n_0}e^{-2zit}q^2;q^2)_\infty}{\sin(\pi 2(s-n_0)/m)}
	dt.
\eea
\end{lemma}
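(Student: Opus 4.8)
The plan is to derive the integral representation for $v_1^{[n_0]}(q)$ in the same way Watson's contour integral was used to prove Lemma~\ref{int_repv1}, only now keeping track of the extra roots of unity that appear because $\zeta$ has order $m$ rather than just being $\pm i$. First I would pick a meromorphic function whose residues at the relevant integers reproduce the summands of $v_1^{[n_0]}(q) = \sum_{n\equiv n_0 \bmod m/2} q^{n(n+1)/2}/(-q^2;q^2)_n$. Since the sum runs over $n \equiv n_0 \pmod{m/2}$, the natural kernel is $\tfrac{1}{\sin(2\pi(s-n_0)/m)}$, which has simple poles exactly at $s \in n_0 + \tfrac m2 \Z$ with residue $\tfrac{m}{2\pi}(-1)^{2(s-n_0)/m}$. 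Multiplying by the analytic factor $\zeta^{s(s+1)/2}\,(-\zeta^{2s}q^2/q^2 \cdot q^2;q^2)_\infty$-type expression and $e^{-zs(s+1)/2}$-type pieces, chosen so that at $s=n$ the residue collapses to $\zeta^{n(n+1)/2}(-q^2;q^2)_\infty/(-q^2;q^2)_n = q^{n(n+1)/2}(-q^2;q^2)_\infty/(-q^2;q^2)_n$ after writing $q = \zeta e^{-z}$.

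The key computational step is then to use Lemma~\ref{lemma:zetatothenn}(2): the factor $\zeta^{n(n+1)/2}(-1)^{2(n-n_0)/m}$ equals $\zeta^{n_0(n_0+1)/2}\, e(\delta (n-n_0)/(2m))$ with $\delta = (-1)^{m/4+n_0+1}\overline{\overline r}$. This is precisely what converts the $s$-dependent root-of-unity prefactor in the integrand into the clean exponential $e^{\pi \delta t/m}$ after the substitution $s = n_0 - izt$ (equivalently $v = -izs$ up to the shift by $n_0$); indeed $e(\delta(s-n_0)/(2m)) = e(\delta \cdot (-izt)/(2m)) = e^{\pi \delta t/m}$. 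The remaining Gaussian factor $e^{-zs(s+1)/2}$ becomes $e^{zt^2/2}e^{-izt/2}$ up to an $n_0$-dependent constant which gets absorbed into the prefactor $\zeta^{n_0(n_0+1)/2}e(\delta n_0/(2m))$, and the Pochhammer $(-e^{-2sz}q^2;q^2)_\infty$ becomes $(-\zeta^{2n_0}e^{-2zit}q^2;q^2)_\infty$ after tracking $e^{-2sz} = e^{-2n_0 z}e^{2izt}$ and combining $e^{-2n_0 z}$ with $q^2 = \zeta^2 e^{-2z}$ appropriately. I would also need the prefactor $\tfrac{1}{m(-q^2;q^2)_\infty}$ to come from $\tfrac{1}{2\pi i} \cdot \tfrac{m}{2\pi} \cdot (\text{Jacobian } -iz) \cdot \tfrac{1}{(-q^2;q^2)_\infty}$ bookkeeping; one should check the sign and the factor of $2$ against the $m=4$ case in Lemma~\ref{int_repv1} as a sanity check.

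The two analytic claims — that the integral over $L_\infty$ converges and that the arc $C_R$ contributes nothing as $R\to\infty$ — go through exactly as in the proof of Lemma~\ref{int_repv1}: parametrize $s = re^{i\theta}$ on the rotated contour so that $\re(-zs^2)<0$, bound the Pochhammer symbol uniformly by $\prod_{j\ge 1}(1+|e^{-2j\re z}|)$ using $\re(sz)>0$, note $|\sin(2\pi(s-n_0)/m)|$ grows like $\tfrac12 e^{2\pi|\im s|/m}$ as $|\im s|\to\infty$, and verify that the quadratic term $-\tfrac{r^2 h^2}{2}\cos(\phi+2\theta)$ dominates both the linear growth from $e^{\pi\delta t/m}$ and the exponential decay of $1/\sin$; this yields an integrand bounded by $e^{-Mr^2}$, so convergence and the vanishing of $C_R$ follow as before. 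The main obstacle is bookkeeping: correctly carrying the accumulated root-of-unity and constant factors through the substitution and the residue computation so that everything collapses to the stated clean form, and in particular confirming that the residue identity from Lemma~\ref{lemma:zetatothenn}(2) is exactly the right one (as opposed to part (1)) — this is dictated by which kernel, $1/\sin$ vs.\ $1/\cos$, and which residue weight, $(-1)^{2(s-n_0)/m}$, one is forced to use.
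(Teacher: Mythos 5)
Your proposal matches the paper's proof in every essential respect: you correctly identify the kernel $1/\sin(2\pi(s-n_0)/m)$ with poles at $s\in n_0+\tfrac m2\Z$ and residues $\tfrac m{2\pi}(-1)^{2(s-n_0)/m}$, you recognize that Lemma~\ref{lemma:zetatothenn}(2) is exactly what is needed to absorb that residue weight together with $\zeta^{n(n+1)/2}$ into the clean factor $\zeta^{n_0(n_0+1)/2}e(\delta(n-n_0)/(2m))$, and you observe that the convergence and vanishing of $C_R$ go through verbatim from the proof of Lemma~\ref{int_repv1}. This is the paper's argument.

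One minor point: the paper actually keeps the contour integral in the variable $s$ throughout the proof and does not explicitly perform the substitution $s\mapsto t$ you describe (the $t$/$s$ discrepancy you noticed in the statement of the lemma is a typographical artifact, and the subsequent proposition picks the argument up from the $s$-form); but your willingness to track that substitution, and your sanity check of signs and constants against the $m=4$ case, are both sound and would only make the bookkeeping more explicit.
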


\begin{proof}
First, note that we have   
\bea
	v_1^{[n_0]}(q)
		&\=\frac {\zeta^{n_0(n_0+1)/2}} {(-q^2;q^2)}
	\sum_{\substack{n\geq 0\\ n=n_0\bmod \frac m2}}
		e({ \delta} \tfrac {n-n_0}{2m})
		{e^{-hn(n+1)/2}}\
		{(-\zeta^{2n_0} e^{-h2n}q^2;q^2)_\infty}
\eea
 using Lemma~\ref{lemma:zetatothenn}, (2).  
The function $\frac 1 {\sin\left(
		2\pi (s-n_0)/m
	\right)}$
has poles at $s \in \Z$ with $s = n_0 \bmod \frac m2$ and residues $(-1)^{2(n-n_0)/m}\ \frac m {2\pi}$.
Hence, we write with
Cauchy's residue theorem
\begin{align*}
	v_1^{[n_0]}&(q)  \\
	&\=\frac {-1}{2\pi i} \frac 1 {(-q^2;q^2)_\infty}
	\lim_{R\to \infty}\int_{L_R+C_R}\!\!\!
		{\zeta^{s(s+1)/2}}
		e^{-hs(s+1)/2}
		(-\zeta^{2n_0}e^{-2hs}q^2;q^2)_\infty
		\frac {2\pi (-1)^{2(s-n_0)/m}}{m\sin(\pi 2(s-n_0)/m)}
	ds\\[10pt]
	&\=
	\frac
		{-\zeta^{n_0(n_0+1)/2} 
			e({ \delta} \tfrac {n_0}{2m})}
		{im(-q^2;q^2)_\infty}
	\lim_{R\to \infty}\int_{L_R+C_R}\!\!\!
		e^{\pi i { \delta} {s}/{m}}
		e^{-hs(s+1)/2}
		\frac {(-\zeta^{2n_0}e^{-2hs}q^2;q^2)_\infty}{\sin(\pi 2(s-n_0)/m)}
	ds.
\end{align*}
The convergence follows analogously to Lemma \ref{int_repv1}.
\end{proof}

\subsubsection{Proof of Theorem \ref{Thm: radial asymp}, (2)}

We will prove the following result, Proposition \ref{Prop} below. The proof of Theorem \ref{Thm: radial asymp}, (2) 
 and the asymptotics of $v_1(q)$ follow  then by summing  $v_1^{[n_0]}(q)$ over $n_0= 0,\ldots, \frac m 2 {- 1}$ as in \eqref{eq:splitv1}  
and setting  
\bea
	\gamma^{\pm}_{(\alpha)} \= \sum_{\substack{n_0 =  0,\ldots, \frac m 2\\ \delta = \pm1}}
	\gamma_{(\alpha)}^{[n_0]}.
\eea

\begin{prp}\label{Prop} Let $\zeta = e(\alpha)$ be a root of unity of order $m$.
	For $n_0\in\{0,\ldots, \tfrac m 2-1\}$ we have
	\bea
		v_1^{[n_0]}(q)
		\;&=\;
		e^{\delta \frac {{ 16} V}{zm^2}}\;
		\left(\frac{ z}{2\pi i}\right)^{-1/2}\!
		\gamma_{(\alpha)}^{[n_0]}{(z)}
 {(1+O(|z|^L))}
		+\phi_{(\alpha)}^{[n_0]}(z)
 {(1+O(|z|^L))}
	\eea
 {for all $L>0$}
	as $q = \zeta e^{-z} \to \zeta$, where $\gamma_{(\alpha)}^{[n_0]}(z) {\in\C[[z]]}$   is defined in  (\ref{eq:defgamman0}) and 
	\bea
		\phi_{(\alpha)}^{[n_0]}(z) \= \sum_{\substack{n<0\\ n\equiv n_0 \bmod \frac m 2}} \frac{q^{n(n+1)/2}}{(-q^2;q^2)_n}.
	\eea
\end{prp}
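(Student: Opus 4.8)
The proposition is a direct generalization of the $\zeta = \pm i$ case (the case $m=4$, $n_0\in\{0,1\}$) already treated in Section~\ref{sec:sumeveni} and Section~\ref{sec:Sum over odd parts}, so the strategy is to mimic that argument verbatim, keeping track of the extra parameters $m$, $n_0$, $r$ (equivalently $\oor$, $\delta$). First I would start from the integral representation of $v_1^{[n_0]}(q)$ established in Lemma~\ref{int_repv1n0}, substitute $v = -izt$ (as was done going from Lemma~\ref{int_repv1} to \eqref{int_rep0}) to pull out the Gaussian factor $e^{v^2/2z}$ and rescale the contour, so that the integral runs over $-i\varphi L_\infty$. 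Then I would deform this contour to a fixed-distance stationary contour $\mathcal S$ through the relevant saddle; in doing so, the poles of $1/\sin(2\pi(s-n_0)/m)$ that lie at the negative-index values $s = n$ with $n\equiv n_0\pmod{m/2}$, $n<0$ get swept across the contour and their residues contribute exactly $\phi_{(\alpha)}^{[n_0]}(z) = \sum_{n<0,\ n\equiv n_0 (m/2)} q^{n(n+1)/2}/(-q^2;q^2)_n$, just as in \eqref{int_rep}.

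**Asymptotic input and the saddle.** Next I would apply Lemma~\ref{asymp_lemma}(3) to replace $(-q^2;q^2)_\infty$ by its leading asymptotic $e^{-\pi^2/(6m^2 z)\cdot(\text{const})}Q(\zeta^2)$-type expression, and Lemma~\ref{lma:asymp_lemmaqpoch} to expand the shifted Pochhammer symbol $(-\zeta^{2n_0}e^{-2zit}q^2;q^2)_\infty$ in terms of dilogarithms $\Li_2^\varphi$, $\Li_1^\varphi$; the $w^m$ appearing there becomes $e^{-4iv}$ after the substitution, exactly as in \eqref{int_rep3}. This puts the integral in the canonical saddle-point shape $\int_{\mathcal S} e^{f(v)/z} g(v)\,dv$ with
\begin{equation*}
f(v) = -\tfrac{1}{8}\Li_2^\varphi(e^{-4iv}) + \tfrac{v^2}{2} + (\text{linear terms in } v \text{ depending on } \delta, n_0, m),
\end{equation*}
and $g$ holomorphic on the slit domain \eqref{eq:domain}. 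The critical-point equation is again $(1-e^{-4iv_0})^2 = -e^{-4iv_0}$, i.e.\ $e^{-4iv_0} = e(\pm 1/6)$, so the saddle values are the same algebraic numbers as before, and $f(v_0)$ evaluates (via the Bloch--Wigner identity used in Section~\ref{sec:sumeveni}) to $\pm V/8$ scaled appropriately — tracking the factors of $m$ gives the exponent $\delta\,\tfrac{16V}{zm^2}$ claimed. Then the saddle-point method (Section~\ref{sec_sp}) yields the main term $e^{\delta\,16V/(zm^2)}(z/2\pi i)^{-1/2}\gamma_{(\alpha)}^{[n_0]}(1+O(|z|))$, with $\gamma_{(\alpha)}^{[n_0]}$ defined as the appropriate product of $g(v_0)$, $\sqrt{i/f''(v_0)}$, the prefactors $-\zeta^{n_0(n_0+1)/2}e(\delta n_0/2m)/m$, and the $Q(\zeta^2)$ factor — this is the content of the forward reference \eqref{eq:defgamman0}. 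Finally summing over $n_0 = 0,\dots,\tfrac m2$ and grouping by the sign $\delta = \pm1$ assembles $v_1(q) = \sum_{n_0} v_1^{[n_0]}(q)$ into the two-term asymptotic \eqref{eq:asymp_v1} with $\gamma_{(\alpha)}^\pm = \sum_{n_0:\,\delta=\pm1}\gamma_{(\alpha)}^{[n_0]}$.

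**Main obstacle.** The conceptual steps are all templated on the $m=4$ case, so the real work — and the place where the argument is most error-prone — is the bookkeeping: (i) verifying that the linear-in-$v$ terms in $f$ that come from the $\delta$-dependent exponential $e^{\pi\delta t/m}$ and from the $\sign(\re(v/\varphi))$ jumps of $\Li_2^\varphi$ still leave $f'(v_0)=0$ at the intended saddle (and that the spurious critical point is killed, as $f'(\pi/12)\neq 0$ was used before), and (ii) correctly propagating the powers of $m$ through the substitution $v=-izt$ and the $\Li_2^\varphi(w^m)/(mz)$ term of Lemma~\ref{lma:asymp_lemmaqpoch} to land on the exact exponent $\delta\cdot 16V/(zm^2)$ rather than something off by a power of $m$ or a sign. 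A secondary point requiring care is the convergence of the contour integral and the vanishing of the arc $C_R$ for general $m$, $n_0$: this needs the estimate of Lemma~\ref{int_repv1} redone with $\sin(2\pi(s-n_0)/m)$ in place of $\sin(\pi s/2)$, but the growth $|\sin(2\pi(s-n_0)/m)|\sim \tfrac12 e^{2\pi|\im s|/m}$ is still dominated by the Gaussian $e^{\re(zs^2/2)}$ for $\arg z\neq 0$, so the argument goes through with only cosmetic changes.
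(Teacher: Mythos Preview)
Your strategy is exactly the paper's: substitute $v=-izt$ in Lemma~\ref{int_repv1n0}, deform to a fixed contour $\mathcal S$ (collecting residues to produce $\phi_{(\alpha)}^{[n_0]}$), apply Lemmas~\ref{asymp_lemma} and~\ref{lma:asymp_lemmaqpoch}, and run the saddle-point method. However, your stated form of $f$ is wrong, and this is precisely the bookkeeping pitfall you flagged yourself. The Pochhammer factor in Lemma~\ref{int_repv1n0} is $(-\zeta^{2n_0}e^{-2iv}q^2;q^2)_\infty$ with $q^2=\zeta^2 e^{-2z}$ and $\zeta^2$ a root of unity of order $m/2$, so Lemma~\ref{lma:asymp_lemmaqpoch} must be applied with ``$m$'' equal to $m/2$, not $2$. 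The $w^{m/2}$ arising there is $(-\zeta^{2n_0}e^{-2iv})^{m/2}=e^{-miv}$ (using $4\mid m$), \emph{not} $e^{-4iv}$. Consequently the paper obtains
\[
f(v)\;=\;-\frac{2\Li_2^\varphi(e^{-miv})}{m^2}+\frac{v^2}{2}-\frac{\pi\delta v-\sign(\re(v/\varphi))\,2\pi v}{m},
\]
the critical-point equation becomes $(1-e^{-miv_0})^2=-e^{-miv_0}$, and the saddle sits at $v_0=\delta\,\pi/(3m)$ rather than $\pm\pi/12$. With this correction the identity $\pi^2/(3m^2)+f(v_0)=\delta\,16V/m^2$ drops out directly. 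Everything else you wrote --- the residue accounting, the holomorphicity on the slit domain, the convergence argument for $C_R$, and the final assembly of $\gamma_{(\alpha)}^\pm$ --- matches the paper.
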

\begin{proof}
{Throughout we write $z=\varphi h$ where $\varphi \in \C$ with $|\varphi|=1$ and $0\neq|\arg(\varphi)| < \frac \pi 2$.}
We substitute $s = iv/z$ in the integral representation from Lemma \ref{int_repv1n0} to obtain

\bea
v_1^{[n_0]}(q)\= 
	\frac 
		{-\zeta^{n_0(n_0+1)/2}
			e({ \delta} \tfrac {n_0}{2m})
		}{mh(-q^2;q^2)_\infty}
	\int_{-ihL_\infty}
		e^{\pi { \delta}  {v}/{mz}}
		e^{v^2/2z} e^{-iv/2}
		\frac {(-\zeta^{2n_0}e^{-2iv}q^2;q^2)_\infty}{\sin(\pi 2(iv/z-n_0)/m)}
	dv.
\eea

Changing the contour of integration to a stationary contour $\mathcal S$, we include the poles at $-2iz\Z_{<0}$ whose residues give a power series
\bea
	\phi^{[n_0]}_{(\alpha)}(z) \= \sum_{\substack{n<0 \\ n \equiv n_0 \bmod \frac m 2}}
	\frac{q^{n(n+1)/2}}{(-q^2;q^2)_n}
.
\eea
Applying the asymptotics from Lemma \ref{asymp_lemma} and Lemma \ref{lma:asymp_lemmaqpoch} to the integrands, we obtain that $v_1^{[n_0]}(q)$ is equal to

\bea
	\frac 
		{-\zeta^{n_0(n_0+1)/2}
			e({ \delta} \tfrac {n_0}{2m})
		}{mhQ(\zeta^2)}
	\int_{\mathcal S} e^{f(v)/z}
 {    g(z;v)}
	dv
	+\phi_{(\alpha)}^{[n_0]}(z)
\eea
 to all orders 
where $Q(\zeta^2)$ is defined in (\ref{eq:Qdef}) and
\bea
	f(v) &= -\frac{2\Li_2^{\varphi}(e^{-miv})}{m^2}
	+\frac {v^2}2 -\frac{\pi \delta v{ -}\sign\left(\re\left( v/\varphi\right)\right) 2\pi v}{m},\\
	g(z;v) &= \sign(\re(v/\varphi))\exp\!\Biggl(\!
	-\frac{iv}2- \sign(\re(v/\varphi))2\pi i m n_0
	- \frac  {\Li_1^{\varphi}(e^{-miv})}2\\
	&\hspace{150pt}
    +\sum_{t=1}^{m/2}\frac{2t}m
	\Li_1^{\varphi}(-\zeta^{2t+2n_0}e^{-2iv})
 {
+\psi_{\zeta^2}(mz;-\zeta^{2n_0}e^{-2iv})}
\Biggr).
\eea
We recall that $\Li^{\varphi}_2(e^{-miv})$ jumps by $2\pi mv$ when $v$ crosses the branch cut at $\re(v/\varphi) =0$. Hence, the function $f(v)$ is holomorphic on the domain defined in (\ref{eq:domain}). A similar argument shows that $g(v)$ is holomorphic on the same domain.

The stationary points $v_0$ of $f$ are given by
\bea
		f'(v_0) &\= -\frac{2i\log(1-e^{-miv_0})}{m}
	+v_0 -\frac{\pi \delta -\sign\left(\re\left( v_0/\varphi\right)\right)2\pi}{m} \= 0.
\eea
This implies in particular $(1-e^{-iv_0m})^2 = -e^{-iv_0m}$, i.e. $e^{-miv_0} = e(\pm 1/6)$
and it can be checked that $v_0 = \delta \frac{\pi}{3m}$ is the unique stationary point.\\

Applying the saddle-point method and using $\frac{\pi^2}{3m^2}+f(v_0) \= \delta \frac{{ 16} V}{m^2}$ implies
 {that the contribution coming from the saddle point is given by
\bea[]
&\frac
{-\zeta^{n_0(n_0+1)/2}
e({ \delta} \tfrac {n_0}{2m})
}{mhQ(\zeta^2)\sqrt{z}}
e^{\delta 16V/m^2z-z/12}\\
&\qquad\quad\times\int
\exp
\biggl(-f''(v_0) \frac {s^2}2
+\sum_{l\geq 3}
\frac{f^{(l)}(e^{-miv_0})}{l!} (is)^lz^{l/2-1}
\biggr)
g\Bigl(v_0 +is\sqrt z;z\Bigr)
ds\\
\=&e^{\frac{\delta V}{zm^2}}
	\sqrt{\frac{2\pi}{\delta z}}
	\gamma_{(\alpha)}^{[n_0]}(z)
\eea  
where the integral goes through a small neighborhood of $v_0$.
In other words, we have the asymptotic expansion}
\bea
	v_1^{[n_0]}(q) \= e^{\frac{\delta V}{zm^2}}
	\left(\frac{\delta z}{2\pi}\right)^{-1/2}
	\gamma_{(\alpha)}^{[n_0]}{(z)}
 {(1+O(|z|^L))}
	+\phi_{(\alpha)}^{[n_0]}(z)
 {(1+O(|z|^L))}
\eea
 {for all $L>0$}
where 
\beal\label{eq:defgamman0}
	\gamma_{(\alpha)}^{[n_0]}
	\=
	-\frac 
		{\zeta^{n_0(n_0+1)/2}
			e({ \delta} \tfrac {n_0}{2m})2g(v_0)
		}{mQ(\zeta^2)\sqrt{f''(v_0)}}
\eeal
with
\bea
	f''(v_0) \= \frac{1+e^{-miv_0}}{1-e^{-miv_0}},
\eea
which completes the proof.
\end{proof}

\section{Proof of Theorem \ref{Conj3}}\label{sec_3}

\subsection{Wright's Circle Method}\label{Sec: WCM}

In this section we prove Theorem \ref{Conj3} using Theorem \ref{Thm: radial asymp}. We follow the idea of Wright's Circle Method (see Section \ref{Sec: WCM prelim}), and also make use of the saddle-point method (see Section \ref{sec_sp}).   We label two major arcs near $\pm i$ as $C_{1}$ and $C_{2}$, where $C_{1}$ is the arc near $i$, and $C_{2}$ is the arc near $-i$. Using Cauchy's theorem, we recover our coefficients as
\begin{align*}
	V_1(n) \= \frac{1}{2 \pi i} \int_C \frac{v_1(q)}{q^{n+1}} dq,
\end{align*}
where $C$ is a circle of radius less than 1 traversed exactly once in the counter-clockwise direction. We split the integral above into three pieces,
\begin{align*}
	\int_C \= \int_{C_{1}} + \int _{C_{2}} + \int _{C - \cup C_{j} }.
\end{align*}
We denote the asymptotic contribution of the first two integrals $M(n)$ (the main term), and the contribution of the final integral $E(n)$ (the error term).

\subsection{Major arc estimates}\label{Sec: major arc}
In this section we calculate the asymptotic contribution arising from the major arcs. We elucidate explicitly the case of one of the contributing terms on the major arc - the calculations for all other contributions are very similar.
 
Consider the term $M_{1}(n):= \frac{1}{2\pi i} \int_{C_{1}} \frac{v_1(q)}{q^{n+1}} dq$. Choose the radius of the circle $C$ to be $e^{-\lambda}$ with $\lambda \coloneqq \sqrt{\frac{|V|}{n}}$. Then the arc $C_{1}$ is described by $ie^{-\lambda + i \theta}$ with $\theta \in (-\delta,\delta)$ for some parameter $\delta >0$. Therefore we make the change of variable $q=ie^{-z}$ and parameterize where $z$ runs from $\lambda+i\delta$ to $\lambda - i\delta$, to obtain 
\begin{align}\label{eqn_M1n1}
	M_{1}(n) \= - \frac{(-i)^n}{2\pi i} \int_{\lambda+i\delta}^{\lambda - i\delta} \frac{v_1\left( ie^{-z} \right)}{e^{-zn}} dz \= \frac{(-i)^n}{2\pi i} \int^{\lambda+i\delta}_{\lambda - i\delta} \frac{v_1\left( ie^{-z} \right)}{e^{-zn}} dz.
\end{align}

From Theorem \ref{Thm: radial asymp} we have that
\begin{align}\label{eqn_v1asyshort} v_1(i e^{-z}) \= e^{\frac{V}{z}} \left(\frac{z}{2\pi i }\right)^{-\frac{1}{2}}\gamma^+ + e^{-\frac{V}{z}} \left(\frac{-z}{2\pi i }\right)^{-\frac{1}{2}}\gamma^- + e^{\frac{V}{z}} O(|z|^{\frac{1}{2}}) + e^{-\frac{V}{z}}O(|z|^{\frac{1}{2}}).
\end{align}  
The first main term of the asymptotics in \eqref{eqn_v1asyshort} yields the following contribution to \eqref{eqn_M1n1}: 
\begin{align}\label{eqn: integral before Mathematika subst}
	 \frac{(-i)^n \gamma^+}{\sqrt{2\pi i}} \int^{\lambda+i\delta}_{\lambda - i\delta} e^{\frac{V}{z} + nz} z^{-\frac{1}{2}} dz \= 
	 \frac{(-i)^n \gamma^+}{n^{1/4} \sqrt{2\pi i}}  \int_{\sqrt{|V|}(1-i)}^{\sqrt{|V|}(1+i)}  e^{\sqrt{n}\left(\frac{V}{z}+z\right)} z^{-\frac{1}{2}} dz,
\end{align} where we let $\delta=\lambda$ and made the change of variable $z\mapsto  \tfrac{z}{\sqrt{n}}.$   

The integral in  \eqref{eqn: integral before Mathematika subst} is of a shape to which we may apply the saddle-point method. We move the contour in \eqref{eqn: integral before Mathematika subst} through the saddle point $\sqrt{V}$, a zero of the derivative of the function $g(z) \coloneqq  \frac{V}{z}+z$ in the exponential in the integrand.  We label this transformed contour $\Gamma$ so that \eqref{eqn: integral before Mathematika subst} becomes 
\begin{align}\label{eqn: integral before Mathematika subst2} 
	 \frac{(-i)^n \gamma^+}{n^{1/4} \sqrt{2\pi i}}  \int_{\Gamma}  e^{\sqrt{n}\left(\frac{V}{z}+z\right)} z^{-\frac{1}{2}} dz,
\end{align} 
We next make the change of variable $z= \sqrt{V} +  i w n^{-\frac{1}{4}}$, and expand relevant functions around the saddle point:
\begin{align*}
\sqrt{n} g(z) 
&\=  \sqrt{n} \sum_{r=0}^\infty \frac{g^{(r)}(\sqrt{V})}{r!}(i w)^r n^{-\frac{r}{4}}  \\
&\= 2\sqrt{nV} -V^{-\frac12} w^2   + \sum_{r=3}^\infty (-i)^r V^{\frac12(1-r)} w^r n^{\frac{2-r}{4}}
\end{align*}
where we have used that $g(\sqrt{V})=2\sqrt{V},$ $g'(\sqrt{V})=0$, and $g^{(n)}(\sqrt{V}) = (-1)^n n! (\sqrt{V})^{1-n}$ for $n\geq 2$.    Thus, we have that 
\begin{align}\label{eqn_expg} 
e^{\sqrt{n}g(\sqrt{V})} \= e^{2\sqrt{nV}}e^{-V^{-\frac12}w^2} \left(
1+ \sum_{r=1}^\infty n^{-\frac{r}{4}} \widetilde{p}_r(w)
\right), 
\end{align} where each $\widetilde{p}_r(w) \in \mathbb C[w]$.  Similarly, we have that 
\begin{align}\label{eqn_z12exp} z^{-\frac12} \= V^{-\frac14}\left(1 + \sum_{r=1}^\infty \frac{\left(-\frac{iw}{2}\right)^r (2r-1)!!}{r!} V^{-\frac{r}{2}} n^{-\frac{r}{4}}\right).
\end{align}
We use \eqref{eqn_expg} and \eqref{eqn_z12exp} 
in \eqref{eqn: integral before Mathematika subst2}   (with $z= \sqrt{V} +  i w n^{-\frac{1}{4}}$) to obtain
\begin{align}\label{eqn: integral before Mathematika subst3} 
	 \frac{i(-i)^n \gamma^+}{n^{\frac{1}{2}} \sqrt{2\pi i}}  e^{2\sqrt{nV}} V^{-\frac{1}{4}}\int_{\Gamma'}  
	 e^{-V^{-\frac{1}{2}} w^2}\left(1+ \sum_{r=1}^\infty n^{-\frac{r}{4}} p_r(w)\right)  dw,
\end{align} where each $p_r(w) \in \mathbb C[w]$ arises from multiplying the corresponding polynomials in \eqref{eqn_expg} and \eqref{eqn_z12exp}.  
Here, because the contour $\Gamma$ was chosen to run through the saddle point $\sqrt{V}$, the contour $\Gamma'$ runs through the origin.  Moreover, we choose $\Gamma'$ so that it also has a horizontal tangent at $0$.  Then, in the limit as $n\to \infty$, we have that $\Gamma' \to \mathbb R$.  Thus,
the expression in \eqref{eqn: integral before Mathematika subst3} is asymptotic to
\begin{align}
 &\frac{i(-i)^n  \gamma^+}{n^{\frac{1}{2}} \sqrt{2\pi i}}   e^{2\sqrt{nV}} V^{-\frac{1}{4}}\int_{-\infty}^\infty   
	 e^{-V^{-\frac{1}{2}} w^2}\left(1+ \sum_{r=1}^\infty n^{-\frac{r}{4}} p_r(w)\right)   dw \\
	 &\=  \frac{i(-i)^n  \gamma^+}{\sqrt{2 in}}  e^{2\sqrt{nV}} \left(1+O\left(n^{-\frac{1}{2}} \right)\right),
\end{align}
where in the final step we use that the polynomials arising from \eqref{eqn_expg} and \eqref{eqn_z12exp} are odd (resp.\@ even) for $r$ odd (resp.\@ even), and that when multiplied they begin with a term of order $n^{-\frac{1}{2}}$.

The calculations for the contributions of the other terms in  \eqref{eqn_v1asyshort} along with the contributions arising from the major arc around $-i$ are very similar and so we omit them for brevity. Collecting all of the contributions yields
\begin{align*}
M(n) &\=	\left(\frac{(-i)^{n} \sqrt{i} \gamma^+ }{\sqrt{2 n}} e^{2\sqrt{nV}} +  \frac{i^{n-1} \sqrt{i} \gamma^+}{ \sqrt{2 n}} e^{2\sqrt{-nV}}  + \frac{(-i)^n \sqrt{i} \gamma^-}{\sqrt{ 2 n}} e^{2 \sqrt{-n V}}+  \frac{i^{n+1} \sqrt{i} \gamma^-}{\sqrt{2 n}}e^{2 \sqrt{n V}} \right) \\ &\hspace{.15in} \times \left(1+O\left(n^{-\frac{1}{2}}\right)\right).
\end{align*}

Simplifying this yields that
\begin{align}\notag
	M(n)
	&\=
	(-1)^{\lfloor \frac n 2 \rfloor}\ 
{{\re(\sqrt{2 i} (\gamma^+ - (-1)^ni\gamma^-))}}
	\frac {e^{\sqrt{2|V|n}}} {\sqrt{ n}}
	\left(\cos(\sqrt{2|V| n}) +{{(-1)^{n+1}}} \sin(\sqrt{2|V| n})\right) \\ &\hspace{.15in} \times \left(1+O\left(n^{-\frac{1}{2}} \right)\right). \label{eqn: main asymp estimate}
\end{align}
Using that $\sqrt i = \frac {1+i}{\sqrt{2}}$ we see that $M(n)$ gives the first term of Theorem \ref{Conj3}. It remains to estimate the contribution from the minor arcs.

\subsection{Minor arc estimates}
 In this section we bound the asymptotic contribution of the minor arcs, which turn out to almost always be exponentially smaller than those from the major arcs (a fact we prove in Section \ref{Sec: proof of Andrew's conjectures}). 
 
We begin by noting that the asymptotic formula for $v_1$ toward all roots of unity provided by Theorem \ref{Conj3} is valid for all $z$ in any cone contained in the right half-plane. In particular, this means that we have the asymptotic behavior of $v_1$ in a punctured neighborhood inside the unit disk of any root of unity. Since the roots of unity are dense on the unit circle, we thus have asymptotic estimates covering the entire unit circle. This is in essence the estimate one requires for the Circle Method of Hardy and Ramanujan, extended by Rademacher, when taking the path of integration on Farey arcs; see \cite{AndrewsThy, HR, Rad}.

 Recalling Lemma \ref{Lem: bounds at roots of unity not divis by 4}, it suffices to consider primitive roots of unity whose order is divisible by $4$, but which are not $\pm i$. Consider the error term, given by
\begin{align*}
	E(n) \coloneqq \frac{1}{2\pi i}\int_{C - \cup C_{j} } \frac{v_1(q)}{q^{n+1}} dq.
\end{align*}
We have
\begin{align*}
	|E(n)| &\= \frac{1}{2\pi} \left\lvert \int_{C - \cup C_{j} } \frac{v_1(q)}{q^{n+1}} dq \right\rvert.
\end{align*}

By Theorem \ref{Thm: radial asymp} we see that the largest contribution to the error arc is given by the $8$-th order roots of unity. So we may bound the entire error term $E(n)$ by the contribution from the $8$-th order roots of unity multiplied by the length of the integral, which is less than $2\pi$.

Let $\zeta$ be an ${ m}$th root of unity, where ${ 4| m}$ and $m>1$.  Following the saddle-point method in the same way as for the major arcs, and using Theorem \ref{Thm: radial asymp}, we obtain a finite sum of integrals of the form
\begin{align*}\frac{\zeta^{-n} K_\zeta}{\sqrt{2\pi i}} \int_\Gamma e^{ \frac{\pm { 16} V}{m^2 z} + n z} z^{-\frac12} dz  
	&\= \frac{\zeta^{-n}K_\zeta}{\sqrt{2\pi i }} n^{-\frac14} \int_{\Gamma'} e^{\sqrt{n}\left( \frac{\pm { 16} V}{m^2 z} + z\right)}z^{-\frac 12} dz.
\end{align*}  
Here, $K_\zeta \in \C$ is some constant depending on the root of unity $\zeta$ (and also on which term from Theorem \ref{Thm: radial asymp} we are adding). Following the saddle-point method, this yields the contribution
\begin{align}\label{eqn: saddle point other 4k roots}
	K'_\zeta \frac{e^{2\sqrt{\pm \frac{{ 16}n V}{m^2}}}} {n^{\frac12}} \left(1+O\left(n^{-\frac{1}{2}} \right)\right),
\end{align}
for some constant $K'_\zeta \in \C$.  

By \eqref{eqn: saddle point other 4k roots} we get (setting $K'_{\zeta,{ 8}}$ as the constant arising from the $m={ 8}$ term there, i.e. $8$-th order roots of unity)
\begin{align}\label{eqn: error bound}
	\lvert E(n) \rvert &\ll  \left\lvert  K'_{\zeta,{ 8}} e^{\sqrt{\pm nV}} n^{-\frac{1}{2}} \right\rvert \= O\left( n^{-\frac{1}{2}}  e^{\sqrt{\frac{ n|V|}{2}}} \right).
\end{align}
Now combining \eqref{eqn: main asymp estimate} along with \eqref{eqn: error bound} finishes the proof of Theorem \ref{Conj3}.

\section{Proof of Theorem \ref{thm_main}}\label{Sec: proof of Andrew's conjectures} 

\subsection{Proof of Andrews' Conjecture 3}  
 Andrews' Conjecture 3 states that $\lvert V_1(n)\rvert \to \infty$ as $n \to \infty$.  As remarked in Section \ref{sec_intro}, after computational and theoretical investigations, we believe that this conjecture can be refined to say ``We have that $|V_1(n)|\rightarrow\infty$ as $n\rightarrow \infty$ away from a set of density $0$.'' This refined conjecture will follow from the arguments needed to prove Andrews' Conjecture 4 below. 

\subsection{Proof of Andrews' Conjecture 4}
 Recall that Andrews' Conjecture 4 states that four consecutive values of $V_1$ come with two positive and two negative signs almost always.

From Theorem \ref{Conj3}, this reduces to studying the function
\begin{align*}
	 (-1)^{\lfloor \frac{n}{2} \rfloor} \left(\cos\left( \sqrt{2|V|n} \right) + (-1)^{n+1} \sin\left( \sqrt{2|V|n} \right)\right).
\end{align*}
Note that we have the following table of signs for $(-1)^{\lfloor \frac{n}{2}\rfloor}$:
\begin{center}
	\begin{tabular}{c|c} 
		$n \pmod{4}$ & $(-1)^{\lfloor \frac{n}{2} \rfloor}$   \\  
		\hline  $0$ & $+$ \\  \hline 
		$1$ & $+$ \\  \hline 
		$2$ & $-$ \\  \hline 
		$3$ &  $-$
	\end{tabular}
\end{center}
Thus, it is enough to study the function (of $n$)
\begin{align*}
	\cos\left( \sqrt{2|V|n} \right) + (-1)^{n+1} \sin\left( \sqrt{2|V|n} \right)
\end{align*}
at $n, n+1, n+2$, and $n+3$.

Heuristically, when $n$ gets large the values $\cos(\sqrt{2|V|(n+j)})$ (resp. $\sin(\sqrt{2 |V|(n+j)})$) for $j\in\{0,1,2,3\}$ are close to each other.  To see this, for $a \in \R$ consider
\begin{align*}
	\lim_{x \to \infty} \cos(a\sqrt{x+1}) - \cos(a\sqrt{x}) \= & \lim_{x\to \infty} -2 \sin\left(\frac{a (\sqrt{x+1} -\sqrt{x})}{2}\right) \sin\left(\frac{a(\sqrt{x+1} + \sqrt{x})}{2}\right)\\
	\= &   \lim_{x\to \infty} -2 \sin\left(\frac{a }{2(\sqrt{x+1} +\sqrt{x})}\right) \sin\left(\frac{a(\sqrt{x+1} + \sqrt{x})}{2}\right) \\
	\=& 0,
\end{align*}
where the final step arises from the expansion at infinity, which is
\begin{align}\label{eqn: trig bound}
\left\lvert \cos(a\sqrt{x+1}) - \cos(a\sqrt{x}) \right\rvert \leq 2 \left(\frac{a}{4\sqrt{x}} + O\left(x^{-\frac{3}{2}}\right)\right).
\end{align}
Note that a similar calculation holds for the $\sin$ term.

By Theorem \ref{Conj3} we have that 
\begin{align*}
V_1(n) &\= M(n) + E(n) \\
\=&(-1)^{\lfloor \frac n 2 \rfloor}\ 
\frac {e^{\sqrt{2|V|n}}} {\sqrt{n}}
{(\gamma^+ + (-1)^n\gamma^-)}
\left(\cos(\sqrt{2|V| n}) -{{(-1)^{n}}} \sin(\sqrt{2|V| n})\right) \left(1+O\left(n^{-\frac{1}{2}} \right)\right) \notag\\
& + O\left( n^{-\frac{1}{2}} e^{\sqrt{\frac{|V|n}{2}}} \right).
\end{align*}

 We want to prove that almost all of the time the main term $M(n)$ is asymptotically larger than the error $E(n)$. The only time this could not happen is when the factor $\cos(\sqrt{2|V| n}) +(-1)^{n+1} \sin(\sqrt{2|V| n})$ is exponentially small, which in turn can only happen near to roots of
 \begin{align*}
 	\mathcal{F}_{\pm} (x) \coloneqq  \cos(x) \pm \sin(x).
 \end{align*}
Such roots occur at $ \pi \left( \ell \pm \frac{1}{4} \right)$ for any $\ell \in \Z$. In the interval $[0,2\pi]$ we thus have four different roots, which we label by $\vartheta_j$ with $1\leq j\leq4$.  Then taking the Taylor series of $\mathcal{F}_\pm$ about $\vartheta_j$ gives
	\begin{align*}
		(x-\vartheta_j)\mathcal{F}_\pm'(\vartheta_i) + O\left( (x-\vartheta_j)^2 \right),
	\end{align*}
where $|\mathcal{F}_\pm'(\vartheta_i)| = \sqrt{2}$.

Then the part where $M(n) \gg E(n)$ occurs when we have $|\mathcal{F}_{\pm} (x)| > e^{-\kappa\sqrt{n}}$ with $\kappa \coloneqq \sqrt{\frac{|V|}{2}}$. Equivalently, we want the argument of $\mathcal{F}$ to stay $ e^{-\kappa\sqrt{n} + \varepsilon\sqrt{n}}$ with $\varepsilon>0$ away from each $\vartheta_j$, since $| \cos(x_n) \pm \sin(x_n) | > e^{-\kappa \sqrt{n}}$ if
\begin{align*}
	\lvert x-\vartheta_j \rvert + O\left( \left( x-\vartheta_j \right)^2 \right) \= e^{-\kappa\sqrt{n} + \varepsilon\sqrt{n}}+ O\left( \frac{e^{-2\kappa\sqrt{n} + 2\varepsilon\sqrt{n}}}{2} \right) > e^{-\kappa \sqrt{n}}
\end{align*}
for all $j$. 

In what follows, we use an argument based on the equidistribution of sequences modulo $1$ to show that almost always the points $x_n \coloneqq \sqrt{2|V|n}$ are more than $ e^{-\kappa\sqrt{n}+\varepsilon\sqrt{n}}$ away from each $\vartheta_j$. To begin, we rescale the interval $[0,2\pi]$ to the interval $[0,1]$, and correspondingly consider $\vartheta_j' \coloneqq \tfrac{\vartheta_j}{2\pi} $ and $x_n' \coloneqq \tfrac{1}{2\pi} \sqrt{2|V|n}$.

Recall that the discrepancy $D_N$ for a sequence $(s_1,\dots,s_N)$ over an interval $[a,b]$ is defined to be
\begin{align*}
	D_N \coloneqq \sup_{a\leq c \leq d \leq b} \left\lvert \frac{\left\lvert \{ s_1,\dots,s_N \} \cap [c,d]\right\rvert}{N} - \frac{d-c}{b-a} \right\rvert,
\end{align*}
and is a quantitative measure of how far the given sequence is from equidistribution on the interval $[a,b]$.

A result of Schoi{\ss}engeier \cite{Sch}, which follows straightforwardly from the Erd\"os--Tur\'an inequality, states that for a sequence $a\sqrt{n}$ with $a \in \R^+$ one has the bound
\begin{align}\label{eqn: discrepancy bound}
	D_N \ll O\left( N^{-\frac{1}{2}} \right).
\end{align}

Now we take the sequence $(x_1', \dots x_N')$ along with $a=0,b=1$. Place an interval  $I_j$ of length $e^{-\kappa\sqrt{n} + \varepsilon\sqrt{n}}/2\pi$ centered at $\vartheta_j'$. Then using \eqref{eqn: discrepancy bound} we see that the number of points which lie in the set $[0,1) - \cup_j I_j$ is bounded below by
\begin{align}\label{eqn: lower bound on proportion}
G(n) \coloneqq 1 - O\left( n^{-\frac{1}{2}} \right).
\end{align}
 We therefore see that the proportion of values for which $\lvert V(n) \rvert \to \infty$ is at least $G(n)$. Along with the fact that $\lim_{n \to\infty} G(n) = 1$, this proves a refined version of Andrews' Conjecture 3.

Using \eqref{eqn: lower bound on proportion} along with the fact that $\mathcal{F}_\pm (\sqrt{2|V|(n+j)}) = \mathcal{F}_\pm(\sqrt{2|V|n}) + O(n^{-\frac{1}{2}})$ by \eqref{eqn: trig bound}, it is clear that almost all $4$-tuples $V(n), V(n+1), V(n+2), V(n+3)$ will all have an exponentially dominant main term $M$. In turn, this means that we automatically obtain the two plus and two minus signs almost always.

\section{Andrews' Conjectures 5 and 6}\label{Sec: explanations}

In this section we discuss Andrews' Conjectures~5 and~6 regarding the coefficients $V_1(n)$. While our methods below do not lead to complete proofs of these two conjectures, they do explain them, ultimately relating $V_1(n)$ to the arithmetic of $\mathbb Q(\sqrt{-3})$.  We restate these conjectures (as in Section~\ref{sec_intro}) here for convenience.
\begin{conjA}[Conjecture 5 \cite{Andrews86}]  For $n\geq 5$ there is an infinite sequence $N_5=293, N_6=410, N_7=545, N_8=702,\dots,N_n \geq 10n^2,\dots$ such that $V_1(N_n), V_1(N_n+1), V_1(N_n+2)$ all have the same sign.
\end{conjA}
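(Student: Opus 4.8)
The plan is to read off the sign of $V_1(n)$ from Theorem \ref{Conj3} and to locate Andrews' sequence $N_n$ as the list of indices at which this sign pattern ``turns over''. First I would note that whenever the main term in Theorem \ref{Conj3} dominates, writing $x_n := \sqrt{2|V|n}$ and using $\gamma^+>\gamma^->0$, one has $\operatorname{sign}(V_1(n)) = (-1)^{\lfloor n/2\rfloor}\operatorname{sign}\!\big(\cos x_n-(-1)^n\sin x_n\big)$, which depends only on $n\bmod 4$ and on $x_n \bmod 2\pi$. As $x_n \bmod 2\pi$ increases once around, this quantity realizes in turn exactly the four ``Sections'' of the table, the transitions occurring precisely as $x_n$ crosses a root of $\cos x\mp\sin x$, i.e.\ as $x_n$ passes $\equiv\pm\tfrac{\pi}{4}\pmod{\pi}$. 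Within a single Section no three consecutive values share a sign (each Section contains only runs of length two), but at each crossing the straddling indices are reshuffled: near a zero of $\cos x-\sin x$ the two even-index values on either side become small, while the intervening odd-index value does not, and near a zero of $\cos x+\sin x$ the roles of even and odd swap. As the data near $n=702,703,704$ illustrates, this produces exactly one block of three equal signs at each crossing.

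Next I would pin down the candidate sequence. Solving $x_N \approx \tfrac{(2n+1)\pi}{4}$ for the $n$-th crossing gives
\[
N_n \;\approx\; \frac{(2n+1)^2\pi^2}{32|V|} \;=\; \frac{(2n+1)^2\pi^2}{4\mathcal G},
\]
with $\mathcal G = 8|V|$ Gieseking's constant; the crossings alternate between zeros of $\cos x-\sin x$ (forcing $N_n$ even) and of $\cos x+\sin x$ (forcing $N_n$ odd). Numerically this reads $294,411,547,703,\dots$, matching Andrews' $293,410,545,702,\dots$, and $N_n\sim\tfrac{\pi^2}{8|V|}n^2 = 9.72\ldots n^2$.

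With the candidate in hand I would make the argument rigorous in three steps. \textbf{(i) The main term dominates at $N_n,N_n+1,N_n+2$.} The two ``small'' values there have main term of size $\asymp n^{-1}e^{\sqrt{2|V|n}}$ — a factor $\operatorname{dist}(x_{N_n},\text{crossing})\asymp n^{-1/2}$ below the generic $n^{-1/2}e^{\sqrt{2|V|n}}$ — which still beats the minor-arc error $O(n^{-1/2}e^{\sqrt{|V|n/2}})$ of Theorem \ref{Conj3}, since $\sqrt{2|V|n}-\sqrt{|V|n/2}=\sqrt{|V|n/2}\to\infty$; one only needs the crude bound $\operatorname{dist}(x_{N_n},\text{crossing})\gg e^{-\kappa\sqrt n}$ with $\kappa=\sqrt{|V|/2}$, which holds away from a density-$0$ set of crossings by the discrepancy estimate $D_N\ll N^{-1/2}$ for $\sqrt{n}$-sequences (Schoi{\ss}engeier \cite{Sch}) already used for Conjecture~4. \textbf{(ii) The three signs coincide.} This is a deterministic parity computation: across the crossing both $(-1)^{\lfloor\cdot/2\rfloor}$ and $\operatorname{sign}(\cos\mp\sin)$ change, so the two flips cancel and $V_1(N_n)$ and $V_1(N_n+2)$ have the same sign; evaluating $\operatorname{sign}(\cos\pm\sin)$ at the crossing shows it equals that common sign, so $V_1(N_n+1)$ matches too, and one checks that $V_1(N_n-1)$ and $V_1(N_n+3)$ have the opposite sign, so the block has length exactly three. \textbf{(iii) Infinitude} is then immediate, since there are infinitely many crossings and only a sparse subset is excluded by (i).

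\textbf{The main obstacle} is making step (i) uniform at the \emph{specific} index $N_n$: one needs an effective lower bound on $\big\|\sqrt{2|V|n}-\tfrac{(2n+1)\pi}{4}\big\|$, i.e.\ Diophantine information about the irrational $\tfrac{1}{\pi}\sqrt{\mathcal G/2}$. Since $\mathcal G=8|V|$ is the covolume of $\operatorname{PGL}_2(\mathbb Z[\omega])$ (governed by $\zeta_{\mathbb Q(\sqrt{-3})}(2)$), whose arithmetic nature is the very source of the conjectured link to $\mathbb Q(\sqrt{-3})$ but whose Diophantine properties are not understood, this approach yields a precisely conjectured $N_n$ together with overwhelming heuristic and numerical support rather than a theorem. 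A secondary caveat: the literal bound $N_n\ge 10n^2$ holds only in the range Andrews computed and is expected to fail for large $n$, because the leading constant $\tfrac{\pi^2}{8|V|}=9.72\ldots$ is just below $10$; the provable statement is $N_n\ge c\,n^2$ for any $c<9.73$.
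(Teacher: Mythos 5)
Your proposal essentially reproduces the paper's own treatment in Section~\ref{Sec: explanations}: the paper does \emph{not} prove Conjecture~5 --- it explicitly states that ``our methods below do not lead to complete proofs of these two conjectures, they do explain them'' --- and both you and the paper locate the candidate $N_n$ at the crossings of $\cos x\pm\sin x$ at $x=\sqrt{2|V|n}$, compare $\lfloor\vartheta_j\rfloor$ against Andrews' values (Table~\ref{Table1}), and identify the identical obstruction, namely the unknown Diophantine nature of $\pi^2/|V| = 2\pi^4/(9\sqrt 3\,\zeta_{\Q(\sqrt{-3})}(2))$, which forces the authors into the Case~I/II/III split (via Baker--Harman \cite{BH}) and leaves only a conditional conclusion. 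So the route is the same; your final ``obstacle'' paragraph lands on exactly the paper's conclusion.

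You do add two genuinely useful refinements not made explicit in the paper. First, the clean closed form $N_n\approx (2n+1)^2\pi^2/(32|V|)$ and the observation that the leading coefficient $\pi^2/(8|V|)\approx 9.72<10$ means Andrews' literal clause ``$N_n\geq 10n^2$'' should eventually fail --- the paper never remarks on this, and it is a worthwhile caveat. Second, your step~(i) is slightly sharper than the paper's: you note that at the \emph{nearest-integer} crossing the main term typically loses only a polynomial factor ($\asymp n^{-1}e^{\sqrt{2|V|n}}$ rather than $\asymp n^{-1/2}e^{\sqrt{2|V|n}}$) and hence still exponentially dominates the minor-arc error, reserving the exponential-smallness concern for atypically close approaches; the paper frames the threshold only as ``$\mathcal F_\pm$ exponentially small.'' One imprecision on your side: step~(ii)'s claim that the parity flip in $(-1)^{\lfloor n/2\rfloor}$ cancels the sign flip of $\cos\mp\sin$ across the crossing, forcing three equal signs, is stated but not verified case by case (which of the two crossing types occurs at even vs.\ odd $N_n$, and what sign $\cos\pm\sin$ takes there); the paper sidesteps this by appealing to the numerics in Table~\ref{Table1}, so both treatments leave this as an exercise. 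Net: same approach, same honest non-proof, plus two observations that would make a nice addition to Section~\ref{Sec: explanations}.
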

\begin{conjA}[Conjecture 6 \cite{Andrews86}]  The numbers $|V_1(N_n)|$, $|V_1(N_n+1)|$, $|V_1(N_n+2)|$ contain a local minimum of the sequence $|V_1(j)|$.   
\end{conjA}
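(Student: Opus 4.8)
The plan is to extract the candidate indices $N_n$ and both phenomena directly from the asymptotic expansion of Theorem~\ref{Conj3}; as will become apparent, a fully rigorous argument runs into an effective Diophantine obstruction, which is why Conjectures~5 and~6 are only \emph{explained} here rather than proved. Throughout set $x_m\coloneqq\sqrt{2|V|\,m}$ and $\kappa\coloneqq\sqrt{|V|/2}$, so $x_m=2\kappa\sqrt m$, and recall from Section~\ref{Sec: proof of Andrew's conjectures} that $x_{m+1}-x_m\sim\kappa\,m^{-1/2}$, whereas the common zeros $\vartheta=\pi(\ell\pm\tfrac14)$, $\ell\in\Z$, of $\mathcal F_+$ and $\mathcal F_-$ are spaced $\pi/2$ apart; note also $\mathcal F_+(x)=\sqrt2\sin(x+\tfrac\pi4)$ and $\mathcal F_-(x)=\sqrt2\cos(x+\tfrac\pi4)$, so a zero of one of $\mathcal F_\pm$ is an extremum of the other.

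First I would record that, by Theorem~\ref{Conj3}, whenever the main term dominates the error one has
\[
\sign\bigl(V_1(m)\bigr)\=(-1)^{\lfloor m/2\rfloor}\,\sign\bigl(\mathcal F_{(-1)^{m+1}}(x_m)\bigr),
\]
using $\gamma^{\pm}>0$ and $\cos x-(-1)^m\sin x=\mathcal F_{(-1)^{m+1}}(x)$. If $x_m,x_{m+1},x_{m+2}$ all lie strictly inside one sector between consecutive zeros $\vartheta$ --- the ``generic'' situation, in which (as in the proof of Theorem~\ref{thm_main}) the main term dominates at all three indices --- then $\mathcal F_{(-1)^{m+1}}$ keeps a constant sign on $[x_m,x_{m+2}]$, and since $\lfloor(m+2)/2\rfloor=\lfloor m/2\rfloor+1$ we get $\sign V_1(m)=-\sign V_1(m+2)$. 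Hence three consecutive values of equal sign \emph{force} $x_m$ to fall within $O(m^{-1/2})$ of some zero $\vartheta$; this is the engine behind Conjecture~5.

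Next I would analyse such a neighbourhood. Let $m^*$ be the index --- of the parity matching $\vartheta$ (even for a zero of $\mathcal F_-$, odd for a zero of $\mathcal F_+$) --- with $x_{m^*}$ closest to $\vartheta$; since consecutive same-parity $x$-values are $\asymp 2\kappa\,m^{-1/2}$ apart, automatically $|x_{m^*}-\vartheta|\lesssim\kappa\,m^{-1/2}$, so one of $x_{m^*\pm2}$ lies on the far side of $\vartheta$. Using $\mathcal F_\epsilon(x)=(x-\vartheta)\mathcal F_\epsilon'(\vartheta)+O((x-\vartheta)^2)$ together with the fact that near $\vartheta$ the ``other'' function of $\mathcal F_\pm$ equals $\pm\sqrt2$ with a sign that is \emph{constant across} $\vartheta$, a short sign bookkeeping --- splitting on whether $x_{m^*}$ lies left or right of $\vartheta$ --- shows that one of the triples $\bigl(V_1(m^*),V_1(m^*+1),V_1(m^*+2)\bigr)$ or $\bigl(V_1(m^*-2),V_1(m^*-1),V_1(m^*)\bigr)$ has all three signs equal. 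Moreover $|V_1(m^*)|\asymp m^{-1/2}e^{x_{m^*}}|x_{m^*}-\vartheta|$ is far smaller than $|V_1(m^*\pm1)|$ --- which lie near an extremum of the ``other'' $\mathcal F_\pm$ --- so $m^*$ is a local minimum of $j\mapsto|V_1(j)|$ sitting inside that same triple, which is Conjecture~6. Taking $N_n$ to be the left endpoint of the triple gives the asserted sequence; since $\vartheta=\pi(\ell\pm\tfrac14)$ and $\ell$ grows linearly in $n$ one finds $N_n\asymp n^2$ (more precisely $N_n\sim\pi^2(2n+1)^2/(32|V|)$), consistent with Andrews' assertion $N_n\ge 10n^2$, while the appearance of $2|V|=\tfrac14\operatorname D(e(1/6))$ --- a dilogarithmic period of $\mathbb Q(\zeta_6)=\mathbb Q(\sqrt{-3})$ --- is the promised arithmetic connection; one checks numerically that this already reproduces $N_5=293$ and $N_6=410$.

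The step that does not go through rigorously is the following. The sign analysis needs $M$ to dominate $E$ at $m^*-2,\dots,m^*+2$, i.e.\ $|x_{m^*}-\vartheta|\gg e^{-(\kappa-\varepsilon)\sqrt{m^*}}$ for some $\varepsilon>0$ (the upper bound on $|x_{m^*}-\vartheta|$ being automatic, as noted), so one must exhibit an $m^*$ with $\bigl|\sqrt{2|V|\,m^*}-\pi(\ell\pm\tfrac14)\bigr|$ in this annulus --- and Andrews asks for \emph{all} sufficiently large $n$. An equidistribution argument via discrepancy bounds (the Schoi{\ss}engeier estimate \cite{Sch}, as in Section~\ref{Sec: proof of Andrew's conjectures}) shows the lower bound fails only for a density-zero set of $\ell$, which yields the phenomenon for almost all $n$ and is the natural analogue, for Conjectures~5 and~6, of what we proved for Conjecture~4. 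But isolating the explicit $N_n$, establishing the precise growth, and ruling out anomalously good approximations $\bigl|\sqrt{2|V|\,m}-\pi(\ell\pm\tfrac14)\bigr|\ll e^{-c\sqrt m}$ for \emph{every} large $n$ would require an effective irrationality measure for the dilogarithmic constant $\sqrt{|V|}$, equivalently effective Diophantine control of the relevant special value attached to $\zeta_{\mathbb Q(\sqrt{-3})}$ --- input that is beyond current technology. This effective Diophantine step is the main obstacle, and is precisely why we offer an explanation rather than a proof.
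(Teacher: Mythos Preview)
Your proposal is correct as an explanation (not a proof, exactly as the paper itself frames Conjectures~5 and~6) and follows essentially the same approach: both derive the phenomenon from the asymptotic of Theorem~\ref{Conj3}, localise the sign disruptions near the zeros $\vartheta=\pi(\ell\pm\tfrac14)$ of $\mathcal F_\pm$, deduce that a local minimum of $|V_1(j)|$ must sit in each such triple, and identify the effective Diophantine control of $\pi^2/|V|$ --- equivalently of $\zeta_{\mathbb Q(\sqrt{-3})}(2)$ --- as the obstruction to a full proof.

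The only structural difference is one of emphasis: you give a more explicit local sign bookkeeping near each $\vartheta$ and then invoke the Schoi{\ss}engeier discrepancy bound to get the phenomenon for almost all $n$, whereas the paper instead splits into cases according to whether $\pi^2/(32|V|)$ is irrational, rational with odd denominator, or rational with even denominator, uses the Baker--Harman theorem on $\|\alpha p^2\|$ in the irrational case, and spends more space explaining why the rationality question itself lies beyond Siegel--Klingen and current motivic technology. Your route is cleaner for producing the density-zero statement; the paper's route gives more arithmetic context for why the obstruction is genuinely deep.
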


Conjecture 6 of Andrews is seen to be essentially explained by Conjecture 5 alongside the asymptotic of $V_1(n)$ given by our Theorem \ref{Conj3}, as it is apparent that for a sign pattern disruption, one must have a local minimum of the sequence $|V_1(j)|$. There are several possibilities for how the sign pattern may fail. Each would rely on determining more concrete information on the error term $E(n)$, which we discuss at the end of this section.

	We remark that for the sign pattern change, one needs that the main term is arbitrarily small infinitely often. A natural related question to pose is whether in fact $V_1(n) = 0$ for infinitely many $n$ in analogy to $\sigma$. Numerical computations suggest that $V_1(n)$ only vanishes for finitely many values of $n$. Checking the first five million coefficients of $v_1(q)$ it appears that $V_1(n)=0$ if and only if
	\bea
	n\in \{2,\,
	3,\,
	4,\,
	5,\,
	11,\,
	13,\,
	15,\,
	17,\,
	19,\,
	21,\,
	25,\,
	29,\,
	31,\,
	39,\,
	47,\,
	58,\,
	60,\,
	62,\,
	64,\,
	101,\,
	111,\,
	123,\,
	129,\,
	198\}.
	\eea
	Our techniques are not amenable to proving that only a finite number of coefficients of $v_1$ vanish since, as discussed below, one would require irrationality results on $\pi^2/|V|$.

\subsection{$M(n)$ and $E(n)$ have the same sign}

Assume that $M(n)$ and $E(n)$ have the same sign. Then we see that in order for each of Andrews' Conjecture 5 and 6 to hold, a necessary (but not sufficient) condition is that the main term $M(n)$ must be arbitrarily small infinitely often.
In turn, we want to find infinite families of integers that are close to the roots $\vartheta_j$. Solving directly, we want to choose infinitely many $n \in \N$ to be arbitrarily close to
\begin{align*}
	\frac{\pi^2\left(\ell \pm \frac{1}{4}\right)^2}{2 |V|}, \qquad \ell \in \Z.
\end{align*} 
To determine whether such choices exist requires more concrete knowledge regarding $\frac{\pi^2}{|V|}$. By results of Milnor \cite{Mil} we have that
\begin{align*}
	\lvert V \rvert \= \frac{9\sqrt{3} \zeta_{\Q(\sqrt{-3})} (2) }{16\pi^2},
\end{align*}
where $\zeta_K$ is the usual Dedekind zeta function associated with the field $K$.

The question of whether a given value of a Dedekind zeta value, say $\zeta_K(2)$ with $K$ a number field, is rational or irrational is a particularly deep question that has been investigated by many authors. In the case where $K$ is totally real, Klingen \cite{Kling} and Siegel \cite{Siegel2} used powerful techniques within the theory of Hilbert modular forms to provide the celebrated Siegel--Klingen theorem\footnote{The case where $K$ is real quadratic was already known to Hecke \cite{Hecke}.}, see e.g. \cite{Siegel}, which states that the values $\zeta_{K}(2n) \in | \text{disc}(K)|^{-\frac{1}{2}} \pi^{2kN} \Q$ with $N = [K:\Q]$ and $n \in \N$. 

However, we are interested in the case where $K$ is imaginary quadratic, and thus lie outside of the scope of Siegel--Klingen. In fact, current methods are unable to determine an analogue of Siegel--Klingen for imaginary quadratic fields, and we reach an impasse. Zagier \cite{Zagier} investigated the values $\zeta_K(2)$ for arbitrary number fields, and determined a representation for them as a multiple of powers of $\pi$, $\sqrt{\text{disc}(K)}$ and integrals of the shape
\begin{align*}
	A(x) \= \int_0^x \frac{1}{1+t^2} \log \frac{4}{1+t^2} dt 
\end{align*}
evaluated at certain points (see \cite[Theorem 1]{Zagier}). In fact, for imaginary quadratic fields, Zagier gave a refined sharper theorem in \cite[Theorem 3]{Zagier}. Despite these beautiful results, we are still unable to determine rationality properties of the zeta values. 

A further example of the depth of such questions is that of the algebraic dependence of $\log(2),$ $\pi,$ $\zeta(3),$ with $\zeta$ the usual Riemann zeta function. This was originally conjectured by Euler \cite{Euler} in 1785. Very recently, Eskandari and Murty \cite{Esk} determined a certain motive with periods given precisely by these three values (along with a fourth period). Conditional on the Grothendieck conjecture, this then proved that in fact this triple is algebraically independent (in opposition to Euler's conjecture). This perhaps leads to a pathway to (conditionally) prove that $\zeta_K(2),\pi,\sqrt{\text{disc}(K)}$ for $K$ imaginary quadratic are algebraically independent by constructing a motive with these periods in much the same spirit as \cite{Esk}.

Thus asking for rationality properties of 
\begin{align*}
	\frac{\pi^2}{|V|} \= \frac{16 \pi^4}{ 9\sqrt{3} \zeta_{\Q(\sqrt{-3})} (2) }
\end{align*}
seems out-of-reach of current mathematics. However, we are able to make slight progress in determining whether there are infinitely many values $n$ such that the main term $M(n)$ is arbitrarily small if we make assumptions on $\frac{\pi^2}{|V|}$.

We consider three disparate cases, depending on the nature of $\frac{\pi^2}{|V|}$. In the case of $\frac{\pi^2}{|V|}$ being irrational, we show that the main term is arbitrarily small infinitely often. In the case where $\frac{\pi^2}{|V|}$ is rational with odd denominator, we show that the main term $M(n)$ vanishes infinitely often, while in the (unlikely) case where $\frac{\pi^2}{|V|}$ is rational with even denominator we show that $M(n)$ cannot be arbitrarily small infinitely often.

\subsubsection{Case I: $\frac{\pi^2}{|V|}$ is irrational}
Assume that $\frac{\pi^2}{|V|}$ is irrational. Then we want to determine whether there are infinitely many choices of positive integers $\ell,n$ such that
\begin{align*}
	\frac{2n}{\left(\ell \pm \frac{1}{4}\right)^2} \= \frac{32n}{(4\ell \pm 1)^2} 
\end{align*}
is arbitrarily close to $\frac{\pi^2}{|V|}$. 

We use \cite[Theorem 1]{BH} with $\beta =0$, which then reads as follows, using the notation $\Vert \cdot \Vert$ to denote the distance to the nearest integer.
\begin{theorem}\label{Thm: BH}
	Let $\alpha$ be irrational and $k \geq 1$. Then there are infinitely many primes $p$ such that
	\begin{align*}
		\Vert \alpha p^k \Vert < p^{-\rho(k) + \varepsilon}
	\end{align*}
	for every $\varepsilon >0$, where $\rho(2) = \frac{3}{20}$ and $\rho(k) = (3\cdot 2^{k-1})^{-1}$ for $k\geq 3$.
\end{theorem}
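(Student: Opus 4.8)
Theorem \ref{Thm: BH} is a statement in the analytic theory of fractional parts of polynomial sequences over primes (as in \cite{BH}), and the plan is to reduce it to a bound for exponential sums over primes and then feed in Weyl-- and Vinogradov--type estimates. First I would fix a large parameter $N$, set $\delta := N^{-\rho(k)+\varepsilon}$, and aim to show that $\#\{\,N<p\le 2N:\Vert\alpha p^k\Vert<\delta\,\}\gg \delta N/\log N$, in particular that it is positive. Approximating the indicator of the arc $(-\delta,\delta)\bmod 1$ from above and below by trigonometric polynomials of degree $H\asymp\delta^{-1}$ (Vaaler's polynomials, or a Fej\'er kernel), this count is governed by the exponential sums
\begin{equation*}
S(h):=\sum_{N<p\le 2N} e(h\alpha p^k),\qquad 1\le h\le H,
\end{equation*}
and it suffices to prove $\sum_{1\le h\le H}|S(h)|=o(\delta N/\log N)$, the main term $\asymp\delta N/\log N$ arising from $h=0$ via the prime number theorem.

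To estimate $S(h)$ I would expand the characteristic (or von Mangoldt) function of the primes by a combinatorial identity --- Vaughan's identity, Heath--Brown's identity, or Harman's sieve --- reducing each $S(h)$, up to $\log^{O(1)}N$ factors, to \emph{Type I} sums $\sum_{m\le M}\sum_{n}e(h\alpha(mn)^k)$ with $M$ small and \emph{Type II} sums $\sum_m\sum_n a_m b_n\, e(h\alpha(mn)^k)$ with $M$ in a bounded range and $a_m,b_n$ divisor--bounded. In a Type I sum the inner sum over $n$ is a one--variable Weyl sum $\sum_n e(\beta n^k)$ with $\beta=h\alpha m^k$, which I would bound by iterated Weyl differencing / van der Corput (for $k\ge 3$, most transparently through the now--established Vinogradov mean value via $\ell^2$--decoupling), gaining a factor $\asymp N^{-2^{1-k}}$ off the trivial bound away from a thin set of $\beta$ close to rationals of small denominator. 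In a Type II sum I would apply Cauchy--Schwarz in $m$ to remove $a_m$, expand the square, difference in the two $n$--variables, and again reduce to a one--variable Weyl sum; here the irrationality of $\alpha$, together with Dirichlet's approximation theorem, controls the exceptional near--rational set and forces a genuine power saving. Collecting the losses and optimising $\delta$ against $N$ should reproduce exactly $\rho(2)=\tfrac{3}{20}$ and $\rho(k)=(3\cdot 2^{k-1})^{-1}$ for $k\ge 3$, the denominator factor $3$ being the Type I/Type II balance inherent in the decomposition and $2^{k-1}$ the Weyl saving. Having then shown that $(N,2N]$ contains a prime $p$ with $\Vert\alpha p^k\Vert<N^{-\rho(k)+\varepsilon}\le p^{-\rho(k)+\varepsilon}$, letting $N\to\infty$ along powers of $2$ yields infinitely many such primes.

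Throughout, the estimate is organised by a dichotomy: on the ``major arcs'', where $\alpha$ is well approximated by a rational of small denominator, the relevant count is handled via the equidistribution of $p^k$ in residue classes (Siegel--Walfisz), while on the ``minor arcs'' one uses the exponential sum bounds above; a Liouville--type $\alpha$ with an exceptionally good rational approximation is, if anything, easier, since the approximation can be exploited directly. This dovetailing of the ranges, using only that $\alpha$ is irrational, is what makes the result uniform in $\alpha$.

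The hard part is the exponential sum estimate over primes with the sharp exponent. For $k\ge 3$ this is the classical marriage of Weyl's inequality with Vaughan's identity, but squeezing out the optimal constant, and especially obtaining $\rho(2)=3/20$ at all, is delicate: the quadratic case requires a more elaborate treatment, combining double Weyl differencing with the large sieve (or the double large sieve, in the spirit of Heath--Brown's work on $\Vert\alpha n^2\Vert$) rather than a bare Weyl bound. Everything else --- the smoothing step, the combinatorial identity, the dyadic passage to ``infinitely many primes'' --- is routine once that bound is in hand.
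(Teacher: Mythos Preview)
The paper does not prove this theorem at all: it is quoted verbatim as \cite[Theorem~1]{BH} (Baker--Harman) with $\beta=0$ and used as a black box to produce infinitely many primes $p$ with $\Vert \tfrac{\pi^2}{32|V|}p^2\Vert$ small. There is therefore nothing in the paper to compare your argument against.

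Your sketch is a reasonable outline of the actual Baker--Harman proof in \cite{BH}: reduce via a Fej\'er/Vaaler smoothing to exponential sums $\sum_{p\sim N}e(h\alpha p^k)$, decompose the sum over primes by Vaughan's identity into Type~I and Type~II sums, and feed in Weyl/van der Corput estimates, with the quadratic case needing extra work. Two comments. First, invoking Vinogradov mean value via $\ell^2$-decoupling is anachronistic for the stated exponents $\rho(k)=(3\cdot 2^{k-1})^{-1}$; those are exactly the exponents coming from the classical Weyl inequality, and Baker--Harman work with that (decoupling would in fact give much better $\rho(k)$ for large $k$, which is not what is claimed here). Second, your description of the $k=2$ case is accurate in spirit --- the exponent $3/20$ is genuinely harder and does not fall out of the bare Weyl bound plus Vaughan --- but ``double Weyl differencing with the large sieve'' is vague; in \cite{BH} the input for $k=2$ is a specific estimate of Ghosh/Harman type for $\sum_p e(\alpha p^2)$, and reproducing $3/20$ requires following that argument rather closely. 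As a plan your proposal is sound; as a proof it would need those two points tightened. But for the purposes of the present paper none of this is required: the correct ``proof'' here is simply the citation.
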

Applying Theorem \ref{Thm: BH} to the irrational $\frac{\pi^2}{32|V|}$ yields infinitely primes $p$ and integers $n$ such that
\begin{align*}
	\left\lvert \frac{\pi^2}{32|V|} p^2 - n \right\rvert < p^{-\frac{3}{20}+ \varepsilon}.
\end{align*}
Dividing both sides by $p^2$ gives
\begin{align*}
	\left\lvert \frac{\pi^2}{32|V|} - \frac{n}{p^2} \right\rvert < p^{-\frac{43}{20}+ \varepsilon}.
\end{align*}
Now simply noting that all primes $\neq 2$ are of the form $4\ell \pm 1$, we see that we obtain infinitely many integer pairs $(\ell,n)$ such that we are arbitrarily close to roots $\vartheta_j$, and thus infinitely many pairs where the main term is arbitrarily small (by taking large enough $p$).

We see that in Case I, as one transitions through a zero of $\mathcal{F}_\pm$, the sign pattern of $M(n)$ is naturally disrupted. Moreover, since $E(n)$ is assumed to have the same sign as $M(n)$, these are the only places that such a transition should occur. We observe an infinite sequence of zeros of the trigonometric function $\mathcal{F}_\pm$, which we label $\vartheta_j$. Taking the sequence as 
\begin{align}\label{eqn: transition points}
	\lfloor \vartheta_j \rfloor
\end{align}
 yields an infinite sequence of integers around which one would expect to have three of the same sign, as predicted by Andrews. Testing numerically, of the first $715$ values where $V_1$ has three values with the same sign (i.e.\@ testing the first five million coefficients of $v_1(q)$), the sequence constructed in \eqref{eqn: transition points} is always within $2$ of the sequence conjectured by Andrews. This may be explained by the fact that the sequence of consecutive values with the same sign need not begin precisely at $\lfloor \vartheta_j \rfloor$, but could begin up to two terms before this value. In Table~\ref{Table1} we give the first ten values of $N_j$ for $j \geq 5$ as predicted by Andrews alongside the values of $\lfloor \vartheta_j \rfloor$.
 
\begin{table}[h]
\begin{tabular}{|c|c|c|c|c|c|c|c|c|c|c|}
	\hline
	\boldmath{$j$} & 5  & 6 & 7 & 8 & 9 & 10 & 11 & 12 & 13 & 14 \\
	\hline
	\boldmath{$N_j$} & 293 & 410 & 545 & 702 & 877 & 1072 & 1285 & 1518 & 1771 & 2044\\
	\hline
	\boldmath{$\lfloor \vartheta_j \rfloor$}& 294 & 410 & 546 & 702 & 877 & 1072 & 1286 & 1519 & 1772 & 2044 \\
	\hline
\end{tabular}
\caption{The first $10$ values of $N_j$ and $\lfloor \vartheta_j \rfloor$ for $j \geq 5$.}
\label{Table1}
\end{table}

\subsubsection{Case II: $\frac{\pi^2}{32 |V|}$ is rational with odd denominator}

Assume that $\frac{\pi^2}{32 |V|} = \frac{h}{k} \in \Q$ with $\gcd(h,k)=1$ and $k$ odd. We see that one would need to choose infinitely many positive integers $n$ that are arbitrarily close to the points
\begin{align}\label{eqn: rational}
	\frac{h}{k} \left(4\ell \pm 1\right)^2.
\end{align}
This is clearly true infinitely often, in particular when $(4\ell \pm 1)^2 = \alpha k$ with $\alpha \in \Z$. In turn, this means that the trigonometric function $\mathcal{F}_\pm$ arising in the main term would be evaluated precisely at one of its roots, meaning that the main term $M(n) =0$ at these points. 

However, this case would fail to explain Andrews' Conjecture 5, since $M(n)$ and $E(n)$ were assumed to have the same sign, and so numerically it seems implausible for Case II to hold.

\subsubsection{Case III: $\frac{\pi^2}{32 |V|}$ is rational with even denominator}

As in Case II, we assume that $\frac{\pi^2}{32 |V|} = \frac{h}{k} \in \Q$ with $\gcd(h,k)=1$. However, if $k$ is even, then it is clear that the right-hand side of \eqref{eqn: rational} has fixed denominator $k$, and thus there cannot be infinitely many integers arbitrarily close to such points.

Based on numerical evidence, the sequence $N_j$ of places where $V_1(n)$ contain three consecutive terms with the same sign appears to be infinite, beginning with the values given in Table \ref{Table1}. In turn, this provides strong evidence that one may discount Case III.

\subsection{$M(n)$ and $E(n)$ have opposite signs}
Now assume that $M(n)$ and $E(n)$ arise with different signs. Then when $\mathcal{F}_\pm$ is of the order $e^{-\kappa\sqrt{n}}$ the main term $M(n)$ and error term $E(n)$ have the same order of growth. Labelling these points $N_j$ this means that there would be a sequence of points $n<N$ where $M(n)$ determines the sign of $V_1(n)$, but as $n$ approaches the value $N_j$ the main term and error term become close to one another. As one passes the point $N$, the sign of $V_1(n)$ is then dictated by $E(n)$, which dominates $M(n)$ when $\mathcal{F}_\pm (n) < e^{-\kappa\sqrt{n}}$. Similarly, there will be such transition points where $M(n)$ again begins to dominate $E(n)$. One sees that the sign pattern of two plus signs and two minus signs would be disrupted at these transition points, providing an infinite sequence of points explaining Andrews' Conjecture 5. However, these points would not be those given in \eqref{eqn: transition points} above, and so numerically it appears that this is not the case.

Overall, we see that in order to more clearly determine information on Andrews' Conjecture 5 and 6, one needs much more precise information on the error term $E(n)$. Using Theorem \ref{Thm: radial asymp} it may be possible to keep track of all order $4m$ roots of unity in a similar fashion to how the problem is handled here. In fact, in a future project it is planned that we use the full Hardy--Ramanujan Circle Method to determine much more precise asymptotics for $V_1(n)$, which should yield an asymptotic as an infinite sum over terms of a similar shape to $M(n)$. Several obstacles would then remain. In particular, the barrier of not knowing the nature of irrationality of $\frac{\pi^2}{|V|}$ remains, and at best one would be able to prove conditional theorems.

\section*{Data availability statement} Data sharing not applicable to this article as no datasets were generated or analyzed during the current study.

\section*{Conflict of interest statement}
On behalf of all authors, the corresponding author states that there is no conflict of interest.


\begin{thebibliography}{99}
	\bibitem{AndrewsDragonette} G. E. Andrews,.\textit{On the theorems of Watson and Dragonette for Ramanujan’s mock theta functions}, Am. J. Math. 88 (1966), 454--490.
	\bibitem{AndrewsLostIV} G.E. Andrews, \textit{Ramanujan's ``Lost" notebook IV.  Stacks and alternating parity in partitions,} Adv. in Math. 53 (1984), no. 1, 55--74.
	\bibitem{Andrews86} G.E. Andrews, \textit{Questions and conjectures in partition theory,} Amer. Math. Monthly  93(9) (1986), 708-711.
	\bibitem{AndrewsAdv} G.E. Andrews, \textit{Ramanujan's ``Lost" notebook. V. Euler's partition identity,}
Adv. in Math. 61 (1986), no. 2, 156--164.
\bibitem{AndrewsThy} G.E. Andrews, \textit{The theory of partitions,}  
Reprint of the 1976 original. Cambridge Mathematical Library. Cambridge University Press, Cambridge, 1998. xvi+255 pp.  
	\bibitem{ADH} G.E. Andrews, F.J. Dyson, and D. Hickerson, \textit{Partitions and indefinite quadratic forms,}
Invent. Math. 91 (1988), no. 3, 391--407. 
	\bibitem{BH} R.C. Baker and G. Harman, \textit{On the distribution of $\alpha p^k$ modulo one}, Mathematika 38 (1991), no. 1, 170–184.

	\bibitem{BJSM} K. Bringmann, C. Jennings-Shaffer, and K. Mahlburg, \textit{On a Tauberian theorem of Ingham and Euler–Maclaurin summation}, Ramanujan J. 61 (2023), 55-86.
	\bibitem{BringmannLovejoyRolen} K. Bringmann, J. Lovejoy, and L. Rolen, {\it On Some Special Families of $q$-hypergeometric Maass Forms}, Int. Math. Res. Not. IMRN (2018)  18, p. 5537-5561.
	\bibitem{BO} K. Bringmann and K. Ono, \textit{The $f(q)$ mock theta function conjecture and partition ranks}, Invent. Math. 165 (2006), no.2, 243--266.
	\bibitem{CGZ} F. Calegari, S. Garoufalidis, and D. Zagier, \textit{Bloch groups, algebraic $\operatorname K$-theory, units and Nahm's Conjecture,} Ann. Sci. Éc. Norm. Supér. (4) 56 (2023), no. 2, 383–426.
	\bibitem{Cohen} H. Cohen, \textit{$q$-identities for Maass waveforms,} Invent. Math. 91 (1988), no. 3, 409--422.
	\bibitem{colin} C.C. Adams, \textit{The noncompact hyperbolic 3-manifold of minimal volume,} Proc. Amer. Math. Soc. 100 (1987), no. 4, 601--606.

\bibitem{Drag} L. Dragonette, \textit{Some asymptotic formulae for the mock theta series of Ramanujan}, Trans. Am. Math. Soc. 72 (1952), 474--500.
	\bibitem{ET} P. Erd\"{o}s and P. Turán, \textit{On a problem in the theory of uniform distribution}, I. Nederl. Akad. Wetensch., Proc. 51 (1948), 1146--1154.
	\bibitem{Esk} P. Eskandari and K. Murty, \textit{On unipotent radicals of motivic Galois groups}, Algebra Number Theory 17 (2023), no. 1, 165--215.
	\bibitem{Euler} L. Euler, \textit{De relatione inter ternas pluresve quantitates instituenda} (E591), Opusc. Anal. 2 (1785), 91--101. Reprinted in Opera Omnia. Ser. 1, Vol. 4: 136--145.
	\bibitem{GZ} S. Garoufalidis and D.B. Zagier, \textit{Asymptotics of {N}ahm sums at roots of unity}, Ramanujan J. 55 : 1 (2021)
	\bibitem{GZknots} S. Garoufalidis and D.B. Zagier, \textit{Knots and their related q-series}, SIGMA Symmetry Integrability Geom. Methods Appl.\ 19 (2023), Paper No. 082, 39 pp.
	\bibitem{GR} G. Gasper, and M. Rahman. \textit{Basic hypergeometric series}. Vol. 96. Cambridge university press, Vancouver (2004).	
	\bibitem{HR} G. H. Hardy and S. Ramanujan, \textit{Asymptotic formul\ae\ in combinatory analysis}, Proc. London Math. Soc. (2) 17 (1918), 75–115.
	
	\bibitem{Hecke} E. Hecke, \textit{Eine neue Art von Zetafunktionen und ihre Beziehungen zur Verteilung der Primzahlen},
	Math. Z. 1 (1918), no. 4, 357–376.
	\bibitem{Hooley} C. Hooley, \textit{On an elementary inequality in the theory of {D}iophantine approximation}, in Analytic Number Theory, Proceedings in a conference in honour of Heini Halberstam (Birkhauser 1996), 471--486.
	\bibitem{MJ} M.-J. Jang, \textit{Asymptotic behavior of odd-even partitions}, Electron. J. Combin. 24 (2017), no. 3, Paper No. 3.62, 15 pp.
	\bibitem{Kap} V. Kaplunovsky, \textit{Saddle Point Method of Asymptotic Expansion,}  \url{https://web2.ph.utexas.edu/~vadim/Classes/2022f/saddle.pdf}, unpublished notes.
	\bibitem{Kling} H. Klingen, \textit{\"{U}ber die Werte der Dedekindschen Zetafunktion}, Math. Ann. 145 (1961/62), 265–272.
	\bibitem{KrauelRolenWoodbury} M. Krauel, L. Rolen, and M. Woodbury, {\it On a Relation Between Certain $q$-hypergeometric series and Maass Waveforms,} Proc. Amer. Math. Soc. (2017) 145,  543--557.
	\bibitem{Kup} L. Kuipers and H. Niederreiter, \textit{Uniform distribution of sequences}, Pure and Applied Mathematics. Wiley-Interscience [John Wiley \& Sons], New York-London-Sydney, (1974). xiv+390 pp.
	\bibitem{LiNgoRhoades} Y. Li, H. Ngo, and R.C. Rhoades, {\it Renormalization and quantum modular forms, part I: Maass wave forms}, preprint, arxiv: arXiv:1311.3043.
	\bibitem{LR} Y. Li and C. Roehrig, {\it Mock Maass Forms Revisited}, The Quarterly Journal of Mathematics, (2025) 76, 367--380.
	\bibitem{21LovejoyCologne} J. Lovejoy {\it Parity Bias in Partitions}, Talk at the Oberseminar Zahlentheorie in Cologne (2021), \url{http://www.mi.uni-koeln.de/algebra/seminars/nt2021slides/lovejoy.pdf}
	\bibitem{Mil} J. Milnor, \textit{Hyperbolic geometry: the first 150 years}, Bull. Amer. Math. Soc. (N.S.) 6 (1982), no. 1, 9–24.
	\bibitem{Nahm} W. Nahm, \textit{Conformal Field Theory and Torsion Elements of the Bloch Group,} Frontiers in Number Theory, Physics, and Geometry, II.  67--132.  Springer, Berlin (2007).
	\bibitem{NR} H.T. Ngo and R.C. Rhoades, \emph{Integer partitions, probabilities and quantum modular forms,} Res. Math. Sci. 4: 17 (2017), 36pp.
	\bibitem{OSullivan} C. O'Sullivan, \emph{Revisiting the saddle-point method of Perron,} Pacific J. Math 298 (1) (2019), 157--199.
	\bibitem{Olver} F. Olver, \emph{Asymptotics and special functions}, CRC Press, Vancouver (1997).
	\bibitem{Pesk} M.E. Peskin and D.V. Schroeder, \emph{An introduction to quantum field theory,} Addison-Wesley, Reading, MA (1995).  xxii+842pp.
\bibitem{Rad} H. Rademacher, \textit{Topics in analytic number theory,} Edited by E. Grosswald, J. Lehner and M. Newman. Die Grundlehren der mathematischen Wissenschaften, Band 169. Springer-Verlag, New York-Heidelberg, 1973. ix+320 pp. 
	\bibitem{Sch} J. Schoi{\ss}engeier, \textit{On the discrepancy of the sequence $\alpha n^\delta$}, Acta Math. Acad. Sci. Hung. { 38} (1981), 29--43.
	\bibitem{Siegel} C.L. Siegel, \textit{Advanced analytic number theory}, 2nd ed., Tata Institute of Fundamental Research Studies in Mathematics, vol. 9, Tata Institute of Fundamental Research, Bombay, (1980).
	\bibitem{Siegel2} C.L. Siegel, \textit{Berechnung von Zetafunktionen an ganzzahligen Stellen}, Nachr. Akad. Wiss. G\"ottingen Math.-Phys. Kl. II 1969 (1969), 87--102.
	\bibitem{Sthesis}M. Storzer, \textit{$q$-Series, their Modularity, and Nahm's Conjecture}, PhD thesis, University Bonn, (2024).
	\bibitem{VZ} M. Vlasenko and S. Zwegers, \textit{Nahm's conjecture: asymptotic computations and counterexamples,} Commun. Number Theory Phys. 5 (3) (2011), 617--642.
	\bibitem{Watson1910} G.N. Watson, \textit{The continuation of functions defined by generalized hypergeometric series} Trans. Camb. Phil. Soc 21 (1910): 281-299.
	\bibitem{CWthesis} C. Wheeler, \textit{Modular $q$-difference equations and quantum invariants of hyperbolic three-manifolds}, PhD thesis, University Bonn, (2023).
	\bibitem{Wright1} E. Wright, \textit{Stacks}, Quart. J. Math. Oxford Ser. 19 (1968), no.\@ 2, 313--320. 
	\bibitem{Wright2} E. Wright, \textit{Stacks. II}, Quart. J. Math. Oxford Ser.\ 22 (1971), no.\@ 2, 107--116.
	\bibitem{Zagier} D.B. Zagier, \textit{Hyperbolic manifolds and special values of Dedekind zeta-functions}, Invent. Math. 83 (1986), no. 2, 285–301.
	\bibitem{Zdilog} D.B. Zagier, \textit{The dilogarithm function,}  Frontiers in number theory, physics, and geometry. {II}, Springer, Berlin (2007), 3-65.
	\bibitem{ZMellin}D.B. Zagier, \textit{The Mellin transform and related analytic techniques,} Quantum field theory I: basics in mathematics and physics. A bridge between mathematicians and physicists, (2006), 305--323.
	\bibitem{ZwegersWave} S.P. Zwegers, \textit{Mock Maass theta functions,}  Q. J. Math. 63 (2012), no. 3, 753--770. 
	\newcommand{\etalchar}[1]{$^{#1}$}
	\bibitem[S{\etalchar{+}}09]{sage} \emph{{S}ageMath, the {S}age {M}athematics {S}oftware {S}ystem ({V}ersion
		9.6)}, The Sage Developers, 2024, {\tt https://www.sagemath.org}.
\end{thebibliography}
\end{document}